\documentclass[11pt]{article}
\usepackage{hyperref}
\usepackage{a4wide, times}
\usepackage{amsfonts, amssymb, amsmath, amsthm}
\usepackage{graphicx}
\usepackage[latin1]{inputenc}
\usepackage{color}
\usepackage{epstopdf}
\usepackage{subfigure}
\usepackage{mathabx}
\numberwithin{equation}{section}
\usepackage{booktabs}
\usepackage{float}
\usepackage{caption}
\usepackage{algorithm}
\usepackage{algpseudocode}
\usepackage{booktabs}
\usepackage{graphicx}
\captionsetup[table]{position=above, labelsep=newline, justification=justified, singlelinecheck=false, skip=0pt, labelfont=bf }
\captionsetup[figure]{labelfont=bf}
\makeatletter
\newcommand*{\centerfloat}{%
\parindent \z@ \leftskip \z@ \@plus 1fil \@minus \textwidth \rightskip\leftskip \parfillskip
\z@skip}
\makeatother
\newcommand{\half}{\frac{1}{2}}

\newtheorem{theorem}{Theorem}

\newtheorem{lemma}[theorem]{Lemma}

\usepackage{natbib}
\bibpunct{(}{)}{,}{}{}{,} 
\def\R{\mathbb{R}}

\title{Numerical methods for solving PIDEs\\
arising in swing option pricing\\ under a two-factor
mean-reverting model with jumps}
\author{
Mustapha Regragui\thanks{Department of Mathematics, Computer Science and Statistics, Ghent University, 9000 Ghent, Belgium.\hfill \break Email: mustapha.regragui@ugent.be},\, 
Karel J. in 't Hout\thanks{Department of Mathematics, University of Antwerp, Middelheimlaan 1, 2020 Antwerp, Belgium.\hfill\break Email: karel.inthout@uantwerpen.be},\,
Mich{\`e}le Vanmaele\thanks{Department of Mathematics, Computer Science and Statistics, Ghent University, 9000 Ghent, Belgium.\hfill\break Email: michele.vanmaele@ugent.be},\,
Fred Espen Benth\thanks{Department of Data Science and Analytics, BI Norwegian Business School, N-0484 Oslo, Norway.\hfill\break Email: fred.e.benth@bi.no}
}

\setlength{\topmargin}{-1cm}
\setlength{\textwidth}{16.5cm}
\begin{document}
    \maketitle
\begin{abstract}
    This paper concerns the numerical valuation of swing options with discrete action times under a linear two-factor mean-reverting model with jumps. The resulting sequence comprises two-dimensional partial integro-differential equations (PIDEs); each equation is convection-dominated  and possesses a nonlocal integral term due to the presence of jumps. Further, the initial function is nonsmooth. We propose various second-order numerical methods that can adequately handle these challenging features. The stability and convergence of these numerical methods are analysed theoretically. By ample numerical experiments, we confirm their second-order convergence behaviour. In addition to the numerical valuation of swing options, a study of their Delta Greeks and the optimal exercise policy is performed.
\end{abstract}
    \section{Introduction}
    Electricity is traded through several types of financial derivatives contracts, such as forwards, futures, swaps and swing options. This paper deals with the valuation of swing options. This type of contract gives the holder the right to buy electricity multiple times at a fixed price under some constraints, for example the holder cannot buy more than a certain amount of energy during the entire life time of the option and also during each exercise period of the option. 
    
    In the literature, there are different formalisations of swing options. In \cite{Kjaer}, the contract is seen as a multi-exercise Bermudan option where the holder can exercise at multiple, predetermined dates and the option price is  the solution of a sequence of parabolic partial integro-differential equations. In \cite{Dahlgren}, the contract is formalised as a multi-exercise American option, where the holder can exercise at any time as long as a certain waiting time between two successive exercise times is respected, and the option valuation is about solving partial integro-differential complementarity problems. Next, the contract can be formalised such that the holder can exercise in continuous time, see \cite{Benth2011OnTO,eriksson2013swing}, which leads to the study of a Hamilton--Jacobi--Bellman (HJB) type equation. In this paper, we will focus on the formalisation where we have a finite number of fixed, predetermined exercise dates. 
    
    There are several ways of pricing swing options. Lattice-based methods, see \cite{Jaillet}, and Monte Carlo type simulations, see \cite{Ibanez}, have been used. Also, considering an expansion of the density of the underlying price process in terms of convenient basis functions, several methods have been derived for this kind of options, see \cite{Zhang2010AnEP,Kribky}. Another approach, which forms the focus of this work, is to solve a sequence of parabolic partial integro-differential equations (PIDEs) as in, e.g., \cite{Kjaer} and \cite{CALVOGARRIDO2016,CALVOGARRIDO201977}. One of the advantages of using a PIDE approach is that one can compute the option price for a whole set of spot prices at once, while Monte Carlo and lattice-based methods can only provide the option price for one spot price.

    The electricity price possesses a mean reversion property with spikes and (daily, weekly and annually) seasonal patterns. One of the first models for this price was proposed by \cite{Lucia} in the form of a geometric Ornstein--Uhlenbeck process which has a mean-reverting property. However, this model has the disadvantage of not incorporating spikes. In \cite{benth2007non}, the electricity spot price is modelled as a linear combination of an Ornstein--Uhlenbeck process and pure mean-reverting jump processes. \cite{kluge} considered an exponential form of this model. In this paper, we follow the approach of \cite{eriksson2013swing} and consider an affine two-factor model with finite activity jumps. The choice of an affine model is motivated by the fact that the electricity market exhibits negative prices, especially in recent years.
    
    Under the affine two-factor spot model, the pertinent two-dimensional PIDEs are convection-dominated in the first direction and have pure convection and a nonlocal integral part in the second direction.
    For its numerical solution, the method of lines is employed, consisting of a discretisation in space followed by a discretisation in time. When the PIDEs are discretised in the spatial domain by finite difference schemes, this results in a large semidiscrete system of ordinary differential equations (ODEs). This system of ODEs is subsequently discretised in the temporal domain using a time-stepping scheme of the operator splitting kind, where the partial differential part is treated implicitly and the integral part is treated explicitly.
    
  The \textit{first contribution} of this paper is the design and analysis of efficient and robust numerical methods that adequately address two key challenges: (i) the handling of the convection-dominated nature of the problem combined with the nonsmooth initial function, and (ii) the treatment of the nonlocal integral term.  

To handle the convection-dominated feature together with the nonsmooth payoff function, we consider two approaches. The first approach is to apply the semi-Lagrangian method. The second approach is to discretise the convection term by carefully chosen finite difference schemes. It is well-known that classical second-order central schemes can lead to spurious oscillatory behaviour. Accordingly, in this paper, we shall explore and compare various second-order upwind schemes, notably the QUICK scheme.  

For the integral term, we consider a second-order spatial discretisation. Next, we present two temporal discretisation schemes that handle the integral part explicitly through a fixed-point iteration. 

The \textit{second contribution} of this paper is a theoretical analysis of the stability and convergence of the proposed numerical methods.  

   The \textit{third contribution} of this paper consists of ample numerical experiments to assess the robustness and accuracy and study the order of convergence of the proposed numerical methods.

  The \textit{fourth contribution} of this paper is the study of swing options using a linear two-factor model for the underlying electricity price. Using the proposed numerical methods, we focus first on a rigorous valuation of this type of options. Then, we study the impact of the model parameters on the option value and also the behaviour of the optimal exercise policy.
    
    This paper is organised in the following way.  Section \ref{sec2} presents the electricity spot price model under consideration and the formulation of the option pricing problem as a sequence of two-dimensional PIDEs. Section \ref{SecSD} concerns the spatial discretisation with special attention to the convection and integral parts.   
    Section \ref{TempScheme} presents the temporal discretisation schemes. These schemes are all second-order and treat the integral term in an explicit way by means of a fixed-point iteration. Section \ref{theory} is devoted to the theoretical analysis of the stability and convergence properties of the schemes. In Section \ref{sec6}, we present ample numerical experiments, especially to study the observed order of convergence. 
    In addition, the behaviour of the Delta Greeks and the optimal exercise policy are investigated.
Finally, Section 7 gives our conclusions.

    \section{Swing option price modelling}
    \label{sec2}
    \subsection{The electricity spot price}
     Let $(\Omega, \mathcal{F},(\mathcal{F}_{t})_{t}, \mathbb{P})$ be a complete     filtered probability space satisfying the usual conditions, with $\mathbb{P}$ the historical or real world probability measure. We assume that there exists an equivalent pricing measure $\mathbb{Q}\sim \mathbb{P}$. 
     
     As in  \cite{eriksson2013swing} we consider the following   linear two-factor model for the electricity spot  price, $S$, adapted to the filtration $(\mathcal{F}_{t})_{t}$ but with dynamics modelled directly under $\mathbb{Q}$\footnote{We could have started, as, e.g., in \cite{Kjaer}, with the $\mathbb{P}$-dynamics and then considered a measure change to get the corresponding $\mathbb{Q}$-dynamics. But as we are interested in the  numerical valuation of derivatives we work directly under the measure~$\mathbb{Q}$.}:
    \begin{align*}
            S_{t}  & = X_{t}+ Y_{t},   \\
        dX_{t} & =\alpha (\mu - X_{t})dt + \sigma dW_{t}, \\
        dY_{t} & = -\beta Y_{t}dt + J_{t}dN_{\lambda,t}.  
    \end{align*}
    
    $X$ is an Ornstein--Uhlenbeck process with mean reversion level $\mu$ and mean
    reversion speed $\alpha$ which depicts the mean reversion property of the electricity
    price, $W$ represents a standard Brownian motion. To incorporate the spikes in
    the prices, $Y$ is a mean-reverting process with a jump component where
    $N_{\lambda}$ is a Poisson process with constant intensity $\lambda$ and $J$
    is the jump size process. The jump size distribution is given by a bounded density function
    $f$. We assume $J$, $N_{\lambda}$ and $W$ to be mutually independent. The mean
    reversion speeds $\alpha$ and $\beta$ and the volatility $\sigma$ are positive constants, while the mean reversion
    level $\mu$ can be either a constant or a time-dependent periodic deterministic
    function characterising the seasonality of the energy price. For ease of presentation, we will assume in this paper that $\mu$ is constant.
  
    The electricity spot price could be modelled in exponential form, i.e., $S_{t}
    =\exp (X_{t}+ Y_{t})$, see, e.g., \cite{kluge,CALVOGARRIDO2016, CALVOGARRIDO201977}. However, we
    prefer the affine form, i.e., $S_{t}=X_{t}+Y_{t}$, which allows the price to
    become negative. 
    The occurrence of negative prices was observed in the day-ahead market and
    is generally due to oversupply combined with low demand, inflexible power
    stations (e.g.\ nuclear reactors) and cheap renewable power. A study by the
    Belgian Federal Commission for Electricity and Gas Regulation (CREG), see \cite{creg}, indicated
    that, in 2020, the cumulative number of hours with negative prices reached 136 hours in Belgium, 102 hours in France,
    319 hours in Germany and 97 hours in the Netherlands.
    \subsection{Formulation of the problem}
    We consider a Bermudan swing option with a predetermined finite number $N_a$ of discrete action times (c.f., \cite{Kjaer}).
    
    We assume that the swing option has the following properties:
    \begin{enumerate}
        \item The fixed strike price is $K$ and the maturity time is $T$.

        \item Swing action times are of the form $T_{n}= n\Delta T$ ($n=1,\ldots
            ,N_a$) with $\Delta T =\frac{T}{N_a}$.

        \item (Local constraint) At each swing action time, the holder has the
            right to buy at most $L$ units of energy for
            the price $K$.

        \item (Volume constraint) The total amount of units bought should not exceed
            a predetermined global upper bound $M$ over the lifetime $[0,T]$ of the
            option.
    \end{enumerate}
    We model the option value as a solution to an optimal stochastic control problem with multiple stopping times.
    \newline
    Let $\mathcal{A}^{N_a}$ be a class of \emph{admissible strategies} consisting of all $\mathbb{F}$-adapted processes $(u_{t})_{0\leq t \leq T}\in L^{2}(\Omega\times[0,T])$ that admit the representation 
    \[
    u_{t}= \sum_{n = 1}^{N_a-1}a_{n}1_{[T_n,T_{n+1}[}(t) + a_{N_a}\delta_{t,T_{N_a}},
    \]
    where $\delta$ denotes the Kronecker symbol, and the $\mathcal{F}_{T_n}$-measurable random variables $a_n$, $n=1,\ldots, N_a$, represent the number of units bought at action time $T_{n}$ satisfying the constraints $a_{n}\in \{0, 1,\ldots, L\}$ and
    \[
    \sum_{n=1}^{N_a}u_{T_{n}}=\sum_{n=1}^{N_a}a_n\leq M .
    \]
 
  Denote by $Z$ the process of the amount
    $Z_{t}$ of energy bought up to time $t$ (where time $t$ is not included).
    The option value function at a swing action time $T_{i}$, $i\in\{1,\ldots,N_a\}$, can be expressed as the conditional expected
    present value of the payoff given a control process $u\in \mathcal{A}
    ^{N_a}$ from the swing action time $T_{i}$ up to maturity time $T$, and given
    that the two factors $X$ and $Y$ and the amount of energy $Z$ at the swing
    action time $T_{i}$ have values $x$, $y$ and $z$ respectively:
    \[
        v (x,y,z,T_{i}) = \sup_{u\in \mathcal{A}^{N_a}} \mathbb{E}_{\mathbb{Q}}\big[ \, \sum_{n=i}^{N_a}e^{-r(T_n-T_i)}(
        X_{T_n}+Y_{T_n}-K)u_{T_{n}}\mid X_{T_i}= x,Y_{T_i}= y, Z_{T_i}= z\,\big],      
    \]
    where $r$ is the risk-free interest rate.
    At $T_0=0$, the amount of purchased energy is zero. Then, the option value can also be expressed as:  
    \[
        v (x,y,0,0) = \sup_{u\in \mathcal{A}^{N_a}} \mathbb{E}_{\mathbb{Q}}\big[ \, \sum_{n=1}^{N_a}e^{-r(T_n-T_0)}(
        X_{T_n}+Y_{T_n}-K)u_{T_{n}}\mid X_{T_0}= x,Y_{T_0}= y, Z_{T_0}= 0\,\big].     
    \]
   
    Using the dynamic programming principle together with the Feynman--Kac theorem as in,  e.g., \cite{Kjaer}, the option value function satisfies a sequence of PIDEs coupled with conditions at the exercise dates:
    \begin{equation}
        \begin{cases}
            \displaystyle \partial_{t}v(x,y,z,t) + \frac{\sigma^2}{2}\partial_{xx}v(x,y,z,t) + \alpha(\mu - x) \partial_{x}v(x,y,z,t) -\beta y\partial_{y}v(x,y,z,t) - (r+\lambda)v(x,y,z,t) \\[3mm]
            \displaystyle \qquad \qquad \qquad \qquad \qquad+\lambda\int_{\mathbb{R}}v(x,y +\xi,z,t)f(\xi)d\xi =0 , \qquad T_{i-1}< t < T_{i}, \\[3mm]
            \displaystyle v(x,y,z,T_i) = \sup_{u\in \mathcal{A}^{N_a}}\sum_{n=i}^{N_a}e^{-r(T_n-T_i)}\mathbb{E}_{\mathbb{Q}}\big[(X_{T_n}+ Y_{T_n}- K)u_{T_{n}}\mid X_{T_i}= x, Y_{T_i}= y, Z_{T_i}= z\big],                                  \\
        \end{cases}
        \label{Eq:2}
    \end{equation}
    for $i\in\{1,\ldots,N_a\}$ and $(x,y,z)\in \mathbb{R}\times\mathbb{R}\times\{0,1,\ldots,M\}$.
    
    Between any two successive swing action times $T_{i-1}$ and $T_{i}$, the option value
    function is the solution of a parabolic PIDE with a terminal condition at
    time $T_{i}$. As this PIDE is the same in each interval $]T_{i-1},T_{i}[$, we start our numerical solution approach for \eqref{Eq:2} by considering the simpler problem:
    \begin{equation}
        \begin{cases}
            \displaystyle \partial_{t}v(x,y,t)= \frac{\sigma^2}{2}\partial_{xx}v(x,y,t) + \alpha(\mu - x) \partial_{x}v(x,y,t) -\beta y\partial_{y}v(x,y,t) - (r+\lambda)v(x,y,t)\\[3mm]
            \displaystyle \qquad \qquad \qquad \qquad \qquad+\lambda\int_{\mathbb{R}}v(x,y +\xi,t)f(\xi)d\xi , \qquad t>0 ,                             \\[3mm]
            v(x,y,0)= \max(x+y -K,0).
        \end{cases}
        \label{Eq:1}
    \end{equation} 
    In \eqref{Eq:1} time has been reversed, as it is  preferred to have an initial condition instead
    of a terminal condition for the PIDE. Note that herein the maximum function is used as it is suboptimal to exercise when the payoff is negative. Further, the variable $z$, that
    represents the amount of energy, is dropped from the option value function $v$ because we
    are only interested in one interval between exercise dates and $z$ stays constant in such interval. 
   
    \section{Spatial discretisation} \label{SecSD}
    For the numerical solution of problem \eqref{Eq:1}, we apply the method of
    lines, consisting of a discretisation in space followed by a discretisation in time. This
    section deals with the spatial discretisation. We successively consider the diffusion-reaction part, the integral part and the convection part of \eqref{Eq:1}. The temporal
    discretisation will be discussed in the next section.
    \subsection{Diffusion-reaction part} \label{sectionDiffusion}
    The spatial domain $\mathbb{R}^{2}$ is truncated to a bounded set $[x_{\min},
    x_{\max}]\times[y_{\min},y_{\max}]$, where $x_{\rm max} > 0$, $y_{\rm max} > 0$,  $x_{\rm min} < 0$ and $y_{\rm min} < 0$ are all taken sufficiently large in absolute value.
    For the $x$-direction and $y$-direction, we impose linear boundary conditions:
    \[
        \frac{\partial^{2}v}{\partial x^{2}}|_{x=x_{\min}}= \frac{\partial^{2}v}{\partial
        x^{2}}|_{x=x_{\max}}= 0\qquad \text{and}\qquad \frac{\partial^{2}v}{\partial
        y^{2}}|_{y=y_{\min}}= \frac{\partial^{2}v}{\partial y^{2}}|_{y=y_{\max}}=
        0 .
    \]
    These conditions, which are common in computational finance, are also natural in our present application.
   
    Let integers $m_1,m_2\geq 1$ and the parameter $d>0$ be given. We use a smooth nonuniform Cartesian grid $\{(x_{i},y_{j})\in [x_{\min},x_{\max}]\times
    [y_{\min},y_{\max}] \mid 0\leq i \leq m_{1}, 0\leq j\leq m_{2}\}$ such that
    a large portion of grid points is contained in a region of (financial and numerical) interest, see, e.g., \cite{houtLamotte2023}.
    \newline
    In the $x$-direction, a smooth nonuniform mesh
    $x_{\min}=x_{0}<x_{1}< \cdots< x_{m_1}=x_{\max}$ is defined by
    \begin{equation*}
        x_i=
        \begin{cases}
            -\frac{1}{2}K + d\cdot\sinh(\xi_{x,i} - \xi_{x,1,\text{int}}), & \text{whenever\ } \xi_{x,i} \leq \xi_{x,1,\text{int}}, \\[2mm]
            x_{i-1} + d\cdot\Delta \xi_{x}, & \text{whenever\ } \xi_{x,1,\text{int}}< \xi_{x,i} < \xi_{x,2,\text{int}}, \\[2mm]
            \frac{3}{2}K + d\cdot\sinh(\xi_{x,i} - \xi_{x,2,\text{int}}), & \text{whenever\ }  \xi_{x,2,\text{int}}\leq \xi_{x,i},
        \end{cases}
    \end{equation*}
    where $\xi_{x,\min}=\xi_{x,0}< \cdots< \xi_{x,m_1}=\xi_{x,\max}$ are
    equidistant points, $\Delta \xi_x = \xi_{x,1}-\xi_{x,0}$, $\xi_{x,1,\text{int}}= -\frac{K}{2d}$, $\xi_{x,2,\text{int}}= \frac{3K}{2d}$, $\xi_{x,\min}
    = \xi_{x,1,\text{int}}+ \sinh^{-1}(\frac{x_{\min}}{d}- \xi_{x,1,\text{int}})$ and $\xi_{x,\max}
    = \xi_{x,2,\text{int}}+ \sinh^{-1}(\frac{x_{\max}}{d}- \xi_{x,2,\text{int}})\,.$
    \newline
    In the $y$-direction, a smooth nonuniform mesh $y_{\min}=y_{0}<y_{1}< \cdots< y_{m_2}=y_{\max}$ is defined by
    \begin{equation*}
        y_j=
        \begin{cases}
            -K + d\cdot\sinh(\xi_{y,j} - \xi_{y,1,\text{int}}), & \text{whenever\ } \xi_{y,j} \leq \xi_{y,1,\text{int}}, \\[2mm]
            y_{j-1} + d\cdot\Delta \xi_{y}, & \text{whenever\ } \xi_{y,1,\text{int}}< \xi_{y,j} < \xi_{y,2,\text{int}}, \\[2mm]
            K + d\cdot\sinh(\xi_{y,j} - \xi_{y,2,\text{int}}), & \text{whenever\ }  \xi_{y,2,\text{int}}\leq \xi_{y,j},
        \end{cases}
    \end{equation*}
    where $\xi_{y,\min}=\xi_{y,0}< \cdots< \xi_{y,m_2}=\xi_{y,\max}$ are
    equidistant points, $\Delta \xi_y = \xi_{y,1}-\xi_{y,0}$, $\xi_{y,1,\text{int}}= -\frac{K}{d}$, $\xi_{y,2,\text{int}}= \frac{K}{d}$, $\xi_{y,\min}
    = \xi_{y,1,\text{int}}+ \sinh^{-1}(\frac{y_{\min}}{d}- \xi_{y,1,\text{int}})$ and $\xi_{y,\max}
    = \xi_{y,2,\text{int}}+ \sinh^{-1}(\frac{y_{\max}}{d}- \xi_{y,2,\text{int}})\,.$
    
    The grid is uniform with a relatively small spatial mesh width inside the \emph{region of financial interest} $[-\frac{1}{2}K,\frac{3}{2}K]\times[-K,K]$ and nonuniform outside. The parameter $d$ controls the fraction of points $(x_i,y_j)$ inside  $[-\frac{1}{2}K,\frac{3}{2}K]\times[-K,K]$. In this paper, we heuristically choose $d=\frac{K}{5}$.  
    Figure~\ref{FigGrid} shows a sample spatial grid for $m_{1}=m_{2}=50, K=50, x_{\min} = -50,x_{\max}=200,y_{\min}=-150,y_{\max} = 150$.
    \begin{figure}
        \centerfloat
        \includegraphics[trim=120 0 80 0, clip, scale=0.4]{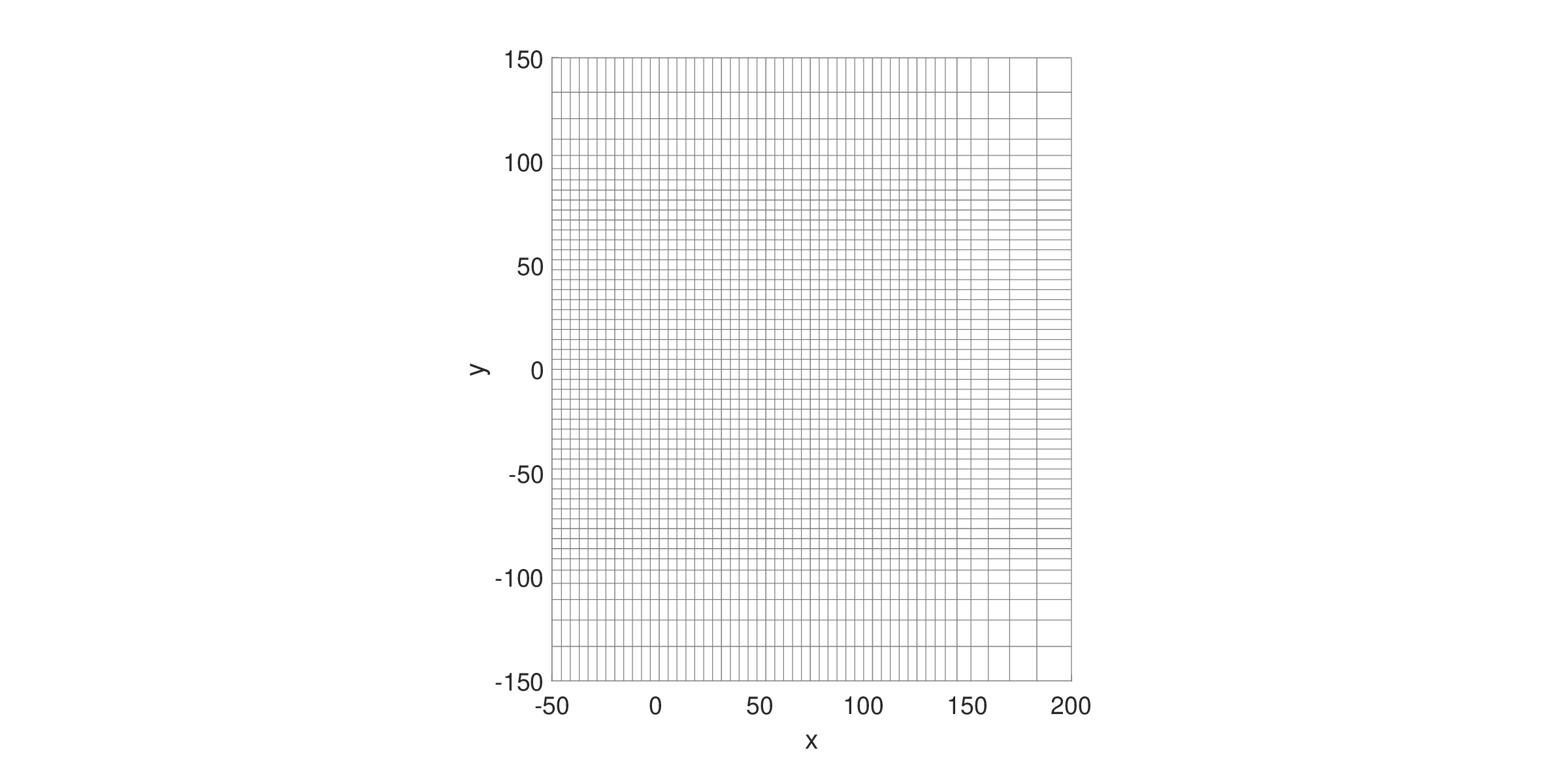}
        \caption{Sample spatial grid for the parameter values $m_{1}=m_{2}=50,K=50, x_{\min} = -50,x_{\max}=200,y_{\min}=-150,y_{\max} = 150$, $d=10$.}
        \label{FigGrid}
    \end{figure}
    
    We denote the semidiscrete approximation of $v(x_{i},y_{j},t)$ by
    $V_{i,j}(t)$ and define the corresponding vector
    \[
        V(t)=(V_{0,0}(t),V_{0,1}(t),\ldots,V_{m_1,m_2-1}(t),V_{m_1,m_2}(t))\in \mathbb{R}
        ^{(m_1+1)(m_2+1)}.
    \]
    The diffusion term in \eqref{Eq:1} is approximated by the second-order central finite difference formula:
    \[
        \partial_{xx}v(x_{i},y_{j},t) \approx \omega_{i,-1}V_{i-1,j}(t) + \omega_{i,0}
        V_{i,j}(t) + \omega_{i,1}V_{i+1,j}(t),\quad 1\leq i\leq m_{1}-1,\; 0\leq j\leq m_{2}
    \]
    with coefficients
    \begin{equation*}\label{weights}
        \omega_{i,-1}=\frac{2}{\Delta x_{i-1}(\Delta x_{i-1}+\Delta x_{i})}\;,\;\omega
        _{i,0}=\frac{-2}{\Delta x_{i-1}\Delta x_{i}}\;,\;\omega_{i,1}=\frac{2}{\Delta
        x_{i}(\Delta x_{i-1}+\Delta x_{i})},
    \end{equation*}
    and $\Delta x_{i}$ is the mesh width in the $x$-direction: $\Delta x_{i}= x_{i+1}-x_{i}$.
    
    The discretisation matrix corresponding to the diffusion-reaction part in \eqref{Eq:1} can be written
    as a Kronecker product:
    \begin{equation}\label{eqA}
         A^D = \Bigl(\frac{\sigma^{2}}{2}D_{2}-(r+\lambda)I_{1}\Bigl)\otimes I_{2}.
    \end{equation}
    Here, $I_{1},I_{2}$ are identity matrices of sizes $(m_{1}+1)\times(m_{1}+1)$ and $(
    m_{2}+1)\times(m_{2}+1)$, respectively.
    $D_{2}:= \mbox{trid}\,[\omega_{i,-1},\omega_{i,0},\omega_{i,1}]$ is a $(m_1+1)\times(m_1+1)$ tridiagonal
    matrix that represents the numerical differentiation of order two in the $x$-direction.
    In view of the linear boundary conditions, the elements in the top and bottom rows of the
    matrix $D_{2}$ are all equal to zero, i.e.,  $\omega_{0,\cdot} = \omega_{m_1,\cdot}=0$.
    \subsection{Integral part}
    \label{sec3.2} To approximate the integral part in \eqref{Eq:1}, we truncate the integration
    domain $\mathbb{R}$ to $[y_{\min},y_{\max}]$ and use linear interpolation
    for the semidiscrete approximation between any given two consecutive grid points in the $y$-direction. Hence,
    starting from the integral at the grid point $(x_{i},y_{j})$, $0\leq i\leq m_1$, $0\leq j\leq m_2$, we get:
    \begin{align*}
        \lambda\int_{-\infty}^{+\infty}v(x_{i},y_{j}+\xi,t)f(\xi)d\xi
        & = \lambda\int_{-\infty}^{+\infty}v(x_{i},\xi,t)f(\xi-y_{j})d\xi \\                    & \approx\lambda\int_{y_0}^{y_{m_2}}v(x_{i},\xi,t)f(\xi-y_{j})d\xi  \\ 
        & \approx \lambda\sum_{\ell=0}^{m_{2}-1}\int_{y_{\ell}}^{y_{\ell+1}}\Bigl(\frac{y_{\ell+1}-\xi}{\Delta y_{\ell}}V_{i,\ell}(t) + \frac{\xi-y_{\ell}}{\Delta y_{\ell}}V_{i,\ell+1}(t)\Bigl)f(\xi-y_{j})d\xi \\
        & = \lambda\sum_{\ell=0}^{m_{2}}\Tilde{B}_{j,\ell}V_{i,\ell}(t)  \\
        & =:\mathcal{J}_{i,j}(t),
    \end{align*}
    where $\Delta y_{\ell}$ is the mesh width
    $\Delta y_{\ell}= y_{\ell+1}-y_{\ell}$, and
    \begin{equation}\label{eqAtilde}
        \begin{cases}
            \Tilde{B}_{j,0}= \dfrac{y_{1}F_{0,j,0} - F_{1,j,0}}{\Delta y_{0}}                                                                                                    \\[2mm]                                                                           
            \Tilde{B}_{j,\ell}= \dfrac{F_{1,j,\ell-1}-y_{\ell-1}F_{0,j,\ell-1}}{\Delta y_{\ell-1}}+ \dfrac{y_{\ell+1}F_{0,j,\ell} - F_{1,j,\ell}}{\Delta y_{\ell}},\quad 1\leq\ell\leq m_{2}-1,   \\[2mm]
             \Tilde{B}_{j,m_2}= \dfrac{F_{1,j,m_2-1}-y_{m_2-1}F_{0,j,m_2-1}}{\Delta y_{m_2-1}}
        \end{cases}
    \end{equation}
    with $F_{0,j,\ell}= \displaystyle \int_{y_{\ell}}^{y_{\ell+1}}f(\xi-y_{j})d\xi, \, F_{1,j,\ell}
    = \int_{y_{\ell}}^{y_{\ell+1}}\xi f(\xi-y_{j})d\xi$.
    \newline
    Let $B= I_{1}\otimes\lambda\Tilde{B}$ be the semidiscrete jump matrix with $\Tilde{B}= (\Tilde{B}_{j,\ell})_{0\leq j,\ell\leq m_2}$. Then,  $BV$ denotes the
    approximation of the integral where the value of the vector $BV$ at the entry
    $i\cdot (m_{2}+1)+ j$ is exactly $\mathcal{J}_{i,j}(t)$.
    To obtain a more accurate approximation of the integral term in our numerical experiments, we account for the contribution of the integrand outside the truncated spatial domain $[y_{\min}, y_{\max}]$. For the approximation of the option value function outside this domain, we apply linear extrapolation.
    This approach improves the accuracy of the numerical quadrature used to evaluate the integral term.
    The computational complexity  of the above approximation is $m_{1}m_{2}^{2}$. We remark, however, that in the special case of the Kou-type jump model the integral can approximated with linear complexity $m_{1}m_{2}$ by an algorithm due to \cite{Toivanen}.
    \subsection{Convection part}
    Aside from the nonlocal integral term in \eqref{Eq:1}, a significant challenge arises due to the nature of the asset price model: the problem is convection-dominated in the $ x$-direction and exhibits pure convection in the $ y $-direction. This feature is attributed to the electricity price dynamics, which exhibits fast mean reversion characterised by large values of the parameters $ \alpha $ and, especially, $ \beta $. These values, detailed in the parameter sets provided in Table \ref{paramSets} of Section \ref{6.1}, result in a highly convection-dominated problem as the diffusion coefficient, $ \frac{\sigma^2}{2}$, is constant and relatively small in comparison with the convection coefficients. 

Moreover, the presence of nonsmooth initial data coupled with a convection-dominated problem leads to the formation of a region of nonsmoothness of the solution, characterised by sharp gradients. This poses difficulties for central finite difference formulas, which may suffer from spurious oscillations.

To address the convection dominance behaviour, we explore two numerical strategies: a semi-Lagrangian method and a suitable semidiscretisation method. Both techniques require effective interpolation or discretisation schemes that ensure adequate accuracy while maintaining numerical stability. 
     
    \subsubsection{Semi-Lagrangian method} \label{sec3.3.2} 
    The semi-Lagrangian method is a well-known and powerful numerical
    tool for solving transport and convection-dominated problems. 
    One employs the characteristic curve $(x(t),y(t),t)$ such that:
    \begin{itemize}
    \item $x(t)$ satisfies:
            \[
                \frac{\partial v(x(t), y,t)}{\partial t}= \frac{\partial v}{\partial
                t}(x(t), y,t) -\alpha (\mu - x)\frac{\partial v}{\partial x}(x(t)
                , y,t)
            \]
            which holds if $x(t) = \mu (1-e^{-\alpha(s-t)}) +
            x(s)e^{-\alpha(s-t)}$ whenever $t\leq s$.
        \item $y(t)$ satisfies:
            \[
                \frac{\partial v(x, y(t),t)}{\partial t}= \frac{\partial v}{\partial
                t}(x, y(t),t) +\beta y \frac{\partial v}{\partial y}(x, y(t),t)
            \]
            which holds if $y(t) = y(s)\exp\{-\beta(s-t)\}$ whenever $t\leq s$.       
    \end{itemize}
    Thus, on the characteristic curve, the PIDE \eqref{Eq:1} can be rewritten a         s:
    \begin{equation}
        \partial_{t}v(x(t),y(t),t) = \frac{\sigma^{2}}{2}\partial_{xx}v(x(t),y(t
        ),t) - (r+\lambda)v(x(t),y(t),t) +\lambda\int_{\mathbb{R}}v(x(t),y(t) +\xi,t)f(\xi)d\xi
        .
    \end{equation}
    We refer to Section \ref{sec4.1} for its temporal discretisation.
    \subsubsection{Semidiscrete approximation}\label{sec3.3.3} 
     Instead of using a semi-Lagrangian method, we can directly approximate
    the convection terms $\frac{\partial v}{\partial x}$ and $\frac{\partial v}{\partial
    y}$ at the grid points $(x_{i},y_{j})$, $0\leq i\leq m_1$, $0\leq j\leq m_2$,   using one of the following schemes:
    \begin{description}
            \item[Second-order upwind scheme] \label{BDF}  \mbox{ }\\
            By Taylor expansion, one obtains the formula for the second-order upwind scheme in the case of nonuniform spatial grids. It is a second-order finite difference approximation of the first-order
            derivative with three-point stencils:
            \begin{equation}\label{eqSUSx}
            \begin{split}
               & \frac{\partial v}{\partial x}(x_i,y_j,t)\\
               &\approx
               \begin{cases}
                    \dfrac{\Delta x_{i-2}+ \Delta x_{i-1}}{\Delta x_{i-2}\Delta x_{i-1}}(V_{i,j}-V_{i-1,j}) - \dfrac{\Delta x_{i-1}}{\Delta x_{i-2}(\Delta x_{i-2} + \Delta x_{i-1})}(V_{i,j}-V_{i-2,j})
                    & \; \text{if}\; \alpha(\mu -  x_{i})< 0,    \\[3mm]
                    \dfrac{\Delta x_{i} + \Delta x_{i+1}}{\Delta x_i\Delta x_{i+1}}(V_{i+1,j}-V_{i,j}) - \dfrac{\Delta x_{i}}{\Delta x_{i+1}(\Delta x_{i}+\Delta x_{i+1})}(V_{i+2,j}-V_{i,j})
                    & \; \text{if}\; \alpha(\mu -  x_{i})>0 \\
                \end{cases}
                \end{split}
            \end{equation}
            and
            \begin{equation}\label{eqSUSy}
             \begin{split}
               &   \frac{\partial v}{\partial y}(x_i,y_j,t) \\
               & \approx
                \begin{cases}
                    \dfrac{\Delta y_{j-2} + \Delta y_{j-1}}{\Delta y_{j-2}\Delta y_{j-1}}(V_{i,j}-V_{i,j-1}) - \dfrac{\Delta y_{j-1}}{\Delta y_{j-2}(\Delta y_{j-2} + \Delta y_{j-1})}(V_{i,j}-V_{i,j-2})
                    & \;\; \text{if}\; -\beta y_{j}<0  ,      \\[3mm]
                    \dfrac{\Delta y_j + \Delta y_{j+1}}{\Delta y_j\Delta y_{j+1}}(V_{i,j+1}-V_{i,j}) - \dfrac{\Delta y_{j}}{\Delta y_{j+1}(\Delta y_{j}+\Delta y_{j+1})}(V_{i,j+2}-V_{i,j})
                    & \;\; \text{if}\; -\beta y_{j}>0\,. \\
                \end{cases}
                  \end{split}
            \end{equation}
            Here, for ease of presentation, we omitted the argument $t$ of $V$.
        \item[QUICK scheme]\label{quick}
        \mbox{ }\\
        The QUICK (Quadratic Upstream Interpolation for Convective Kinematics) scheme, see \cite{LEONARD197959}, is a second-order method based on quadratic interpolation. It is commonly used in computational fluid dynamics (CFD) for solving convection-diffusion equations. For clarity, the QUICK scheme employed in this paper is formulated using the finite difference approach, although it was originally introduced and is more commonly applied within the finite volume framework. Let quadratic polynomials $p_{x,i,j}$ and $p_{y,i,j}$ be defined by:
        \begin{align*}
            p_{x,i,j}(x)&= \frac{(x\!-\!x_{i})(x\!-\!x_{i+1})}{\Delta x_{i-1}(\Delta x_{i-1}+\Delta x_{i})}V_{i-1,j}\!-\!\frac{(x\!-\!x_{i-1})(x\!-\!x_{i+1})}{\Delta x_{i-1}\Delta x_{i}}V_{i,j}\!+\!\frac{(x\!-\!x_{i-1})(x\!-\!x_{i})}{(\Delta x_{i-1}+\Delta x_{i})\Delta x_{i}}V_{i+1,j},\\
            p_{y,i,j}(y)&= \frac{(y\!-\!y_{j})(y\!-\!y_{j+1})}{\Delta y_{j-1}(\Delta y_{j-1}+\Delta y_{j})}V_{i,j-1}\!-\!\frac{(y\!-\!y_{j-1})(y\!-\!y_{j+1})}{\Delta y_{j-1}\Delta y_{j}}V_{i,j}\!+\!\frac{(y\!-\!y_{j-1})(y\!-\!y_{j})}{(\Delta y_{j-1}+\Delta y_{j})\Delta y_{j}}V_{i,j+1}.
        \end{align*}
        Then, the QUICK scheme for the first-order derivative is given by: 
        \begin{equation}\label{quickx}
            \begin{split}
                \frac{\partial v}{\partial x}(x_i,y_j,t)
               \approx
               \begin{cases}
                    \dfrac{p_{x,i,j}(x_{i+1/2})-p_{x,i-1,j}(x_{i-1/2})}{x_{i+1/2}-x_{i-1/2}}
                    & \; \text{if}\; \alpha(\mu -  x_{i})< 0,    \\[3mm]
                    \dfrac{p_{x,i+1,j}(x_{i+1/2})-p_{x,i,j}(x_{i-1/2})}{x_{i+1/2}-x_{i-1/2}}
& \; \text{if}\; \alpha(\mu -  x_{i})>0, \\
                \end{cases}
                \end{split}
            \end{equation}

            \begin{equation}\label{quicky}
            \begin{split}
                \frac{\partial v}{\partial y}(x_i,y_j,t)
               \approx
               \begin{cases}
                    \dfrac{p_{y,i,j}(y_{j+1/2})-p_{y,i,j-1}(y_{j-1/2})}{y_{j+1/2}-y_{j-1/2}}
                    & \; \text{if}\; -\beta y_{j}< 0,    \\[3mm]
                    \dfrac{p_{y,i,j+1}(y_{j+1/2})-p_{y,i,j}(y_{j-1/2})}{y_{j+1/2}-y_{j-1/2}}
& \; \text{if}\; -\beta y_{j}>0, \\
                \end{cases}
                \end{split}
            \end{equation}
            where $x_{i+1/2}=\frac{x_{i+1}+x_i}{2}$ and $y_{j+1/2}=\frac{y_{j+1}+y_j}{2}$.
            \end{description}
        The above semidiscretisation schemes, second-order upwind and QUICK,  can be assembled in matrix form. The values at ghost points, for example $V_{m_1+1,j}$ and $V_{m_1+2,j}$, are defined by linear extrapolation of values inside the truncated domain, for instance $V_{m_1+1,j} = 2V_{m_1,j}-V_{m_1-1,j}$ and $V_{m_1+2,j}= 3V_{m_1,j}-2V_{m_1-1,j}$. The matrices can be expressed in the following way: 
        \begin{equation}\label{Ax}
        A_x = D_x\Tilde{A}_x\otimes I_2 ,   
        \end{equation}
         and 
         \begin{equation}\label{Ay}
             A_y =I_1\otimes D_y\Tilde{A}_y,
         \end{equation}
          where $\Tilde{A}_x$ and $\Tilde{A}_y$ represent the matrices of numerical differentiation of order one in the $x$- respectively $y$-direction stemming from either the second-order upwind scheme or the QUICK scheme. Next, $D_x$ and $D_y$ are diagonal matrices with $D_{x,i,i}=\alpha(\mu - x_i)$ and $D_{y,j,j}=-\beta y_j$ for $0\leq i\leq m_1$ and $0\leq j\leq m_2$.
    
    The spatial discretisation of \eqref{Eq:1} defined in this section leads to the system of ODEs
    \begin{equation}\label{ODE}
        \frac{d V}{dt}(t)=(A + B)V(t),
    \end{equation}
    where
    \begin{equation}\label{A}  
    A = A^{D} + A_x+ A_y.
    \end{equation}
    The initial vector is given by
    \begin{equation}\label{v0}
        V(0)  = \max(\hat{x}\otimes e_{y}+ e_{x}\otimes \hat{y} - K\cdot e_{x}\otimes e_{y},0),
    \end{equation}  
    where $\hat{x}= (x_0,\ldots, x_{m_1})$ and $\hat{y}= (y_0,\ldots, y_{m_2})$, and  $e_{x},e_{y}$ denote vectors of ones of size $m_1+1$ and $m_2+1$ respectively.
    \section{Temporal discretisation}\label{TempScheme}
    In this section we present different schemes for the temporal discretisation of the semidiscrete problem.
    Note that for the semi-Lagrangian approach as well as the semidiscrete approach the jump matrix $B$ derived from the spatial discretisation in Section \ref{sec3.2} is a full matrix. Thus, we will avoid using a temporal
    scheme where one needs to solve a linear system involving this matrix.
    We consider the temporal discretisation schemes described in the subsections below. Let integer $N\geq 1$ be given and the step size $\Delta t =\frac{T}{N}$. Let $V^{n}$ denote
    the approximation of $V(t_{n})$ at the temporal grid point
    $t_{n}= n\Delta t$ ($n=1,2,\ldots,N$) with $V^0 = V(0)$. Let $I= I_1\otimes I_2$.
    \subsection{Temporal scheme for the semi-Lagrangian approach}
    \label{sec4.1}  
            The Crank--Nicolson scheme with fixed-point iteration combined with the semi-Lagrangian
            approach was proposed in \cite{asianForsyth} for the numerical
            valuation of Asian options. Applied to our case, we have
            \begin{equation} \label{slcnfi}
                (I-\tfrac{1}{2}\Delta t A^D)Y_{\ell}=\mathcal{I}[V^{n}
                ]+\tfrac{1}{2}\Delta t \mathcal{I}[A^DV^{n}] +\tfrac{1}{2}\Delta t\mathcal{I}
                [BV^{n}] + \tfrac{1}{2}\Delta tBY_{\ell-1},
            \end{equation}
            for $\ell=1,\ldots, \ell_{max}$ and $V^{n+1}= Y_{\ell_{max}}$. Here, $Y
            _{0}= 2V^{n}-V^{n-1}$ if $n\geq 1$ and $Y_0 = V^0$ if $n=0$. The following stopping criterion is used for the fixed-point iteration: 
            \[
            \underset{k}{\max}\frac{\mid Y_{\ell,k}-Y_{\ell-1,k}\mid}{\max(1,\mid
            Y_{\ell,k}\mid) }< 10^{-7} .
            \]
    In the scheme \eqref{slcnfi}, 
the operator $\mathcal{I}$ denotes the two-dimensional cubic spline interpolation, 
which yields approximations at the departure points $(\mu(1-e^{-\alpha\Delta t}) + x_{i}e^{-\alpha\Delta t},y_{j}e^{-\beta \Delta t})$, 
using known approximations at the grid points $(x_{i},y_{j})$ for $0\leq i \leq m_{1}$ and $ 0\leq j\leq m_{2}$. Hereafter, the scheme \eqref{slcnfi} is referred to as the semi-Lagrangian Crank--Nicolson scheme with fixed-point iteration (\textbf{SLCNFI}).
    \subsection{Temporal schemes for the semidiscretisation approach}\label{secODE}
We consider two temporal discretisation schemes for the semidiscrete system \eqref{ODE}:
    \begin{description}
        \item [Crank--Nicolson scheme with fixed-point iteration (CNFI)]             \mbox{ }\\
            The combination of the Crank--Nicolson scheme for the convection-diffusion-reaction
            part with  a fixed-point iteration for the integral part was proposed in \cite{randal} and analysed in
            \cite{cnfi}:
            \begin{equation}
                \label{CNfpi}(I-\tfrac{1}{2}\Delta t A)Y_{\ell}=(I+\tfrac{1}{2}\Delta
                t A)V^{n}+\tfrac{1}{2}\Delta t(BV^{n}+ BY_{\ell-1})\, ,
            \end{equation}
            for $\ell=1,\ldots, \ell_{\max}$. We use the same starting vector and stopping criterion as in the semi-Lagrangian
            approach. We apply  Rannacher time-stepping for the first two steps using half time steps $\frac{1}{2}\Delta t$ by computing $V^{1}$ and $V^{2}$ using the backward Euler scheme with fixed-point iteration on the integral part. This technique is a well known remedy for the adverse impact of the nonsmoothness of the initial function on the convergence of the Crank--Nicolson scheme due to its lack of $L$-stability (see \cite{Rannacher1984}).

    \item[Diagonally implicit Runge-Kutta scheme with fixed-point iteration (DIRKFI)] \mbox{ }\\
    This scheme, studied by \cite{hout2024efficientnumericalmethodamerican}, combines the DIRK scheme with a penalty/fixed-point iteration for the numerical valuation for American-style options under the two-asset Kou-type jump-diffusion model. In our case, we obtain
    \begin{equation}
        \label{DIRK}\left\{
        \begin{array}{lll}
           W_1 = V^{n} + (1-\theta)\Delta t(AV^{n} + BV^{n}), \\
            (I-\theta\Delta tA)Y_{\ell} = W_1 + \theta\Delta tBY_{\ell-1} \quad (\ell = 1,\ldots, \ell_{\max}), \\
            \widehat{Y} = Y_{\ell_{\max}}, \\
            W_2 =V^{n} + \tfrac{1}{2}\Delta t(AV^{n} + BV^{n}) + (\tfrac{1}{2}-\theta)\Delta t(A\widehat{Y} + B\widehat{Y}), \\
            (I-\theta\Delta tA)Z_{\ell} = W_2 + \theta\Delta tBZ_{\ell-1}  \quad (\ell = 1,\ldots, \ell_{\max}), \\
            V^{n+1} = Z_{\ell_{\max}}.
        \end{array}\right.
    \end{equation}
    At each time step, there are two fixed-point iteration processes. For the starting vectors, $Y_0 = 2V^n - V^{n-1}$ if $n\geq 1$, $Y_0= V^0$ if $n=0$ and $Z_0 = \hat{Y}$. For both processes, the same stopping criterion as in the SLCNFI scheme is used. The scheme has a second-order consistency for any $\theta$ and is $L$-stable if and only if $\theta = 1 \pm \frac{\sqrt{2}}{2}$, see \cite{CashDIRK}. The choice $\theta = 1 - \frac{\sqrt{2}}{2}$ yields a smaller error constant as observed in \cite{hout2024efficientnumericalmethodamerican}. 
    \end{description}

    \section{Convergence and stability analysis for the semidiscretisation approach}
   
    \label{theory}
     
    This section deals with the convergence and stability analysis of some of the numerical schemes above. In Section \ref{5.1},  the convergence in the $\ell_{\infty}$-norm of the CNFI scheme  \eqref{CNfpi}
    to the Crank--Nicolson scheme is studied and similarly the DIRKFI scheme. Then, in Section \ref{5.2}, we study the stability and convergence in the $\ell_2$-norm of the Crank--Nicolson and DIRK schemes under Dirichlet boundary conditions. Throughout this section,
   we consider a uniform grid with mesh width $\Delta x$ in the $x$-direction and $\Delta y$ in the $y$-direction.

For the semidiscretisation of the convection terms in \eqref{Eq:1} we consider the following general finite difference scheme

    \begin{equation}\label{eqFVij}
    \begin{split}
        a_i\frac{\partial v}{\partial x}(x_i,y_j,t)
        & \approx a^{+}_{i}\frac{w_2V_{i+2,j}+w_1V_{i+1,j}+ w_0V_{i,j} + w_{-1}V_{i-1,j}}{\Delta x}\\
       & \quad + a^{-}_{i}\frac{-w_{-1}V_{i+1,j}-w_0V_{i,j}-w_1V_{i-1,j}-w_2V_{i-2,j}}{\Delta
        x},\\[4mm]
        b_j\frac{\partial v}{\partial y}(x_i,y_j,t)
        &\approx b^{+}_{j}\frac{w_2V_{i,j+2}+w_1V_{i,j+1}+ w_0V_{i,j} + w_{-1}V_{i,j-1}}{\Delta y}\\
        & \quad + b^{-}_{j}\frac{-w_{-1}V_{i,j+1}-w_0V_{i,j}-w_1V_{i,j-1}-w_2V_{i,j-2}}{\Delta
        y},
        \end{split}
    \end{equation}
    for $0\leq i\leq m_1$ and $0\leq j\leq m_2$. Here,  $a_i =\alpha(\mu-x_i) $ and $b_j =-\beta y_j $ and  for any real number $c$ we denote $c^{+}=\max(c,0)$ and $c^{-}=\min(c,0)$. The coefficients $w_{-1},w_0,w_1,w_2$ satisfy the following conditions
      \begin{equation}
      \label{W3} 
        \sum_{k=-1}^2w_k=0,\quad \sum_{k=-1}^2kw_k=1,\quad \sum_{k=-1}^2k^{2}w_k=0,\quad
            w_2\leq0.
            \end{equation}
       The three equalities in \eqref{W3}, which can be derived using Taylor expansion, are sufficient and necessary conditions for the finite difference scheme to be at least of second-order. Schemes of interest that belong to the above family of schemes \eqref{eqFVij}-\eqref{W3} are:
        \begin{itemize}
            \item The second-order upwind scheme \eqref{eqSUSx}-\eqref{eqSUSy} with $w_2=-\frac{1}{2}$, $w_1 = 2, w_0 = -\frac{3}{2}$ and $w_{-1}=0$.
            \item The second-order QUICK scheme \eqref{quickx}-\eqref{quicky} with $w_2=-\frac{1}{8}$, $w_1 = \frac{7}{8}$ and $w_0=w_{-1}=-\frac{3}{8}$.
            \item The third-order upwind scheme with $w_2=-\frac{1}{6}$, $w_1 = 1$, $w_0=-\frac{3}{6}$ and $w_{-1}=-\frac{2}{6}$.
            \item The second-order central scheme with $w_2=0$, $w_1=\frac{1}{2}$, $w_{0}=0$ and $w_{-1}=-\frac{1}{2}$.
        \end{itemize}
        Without loss of generality, we assume that the functions $a$ and $b$ with $a(x)= \alpha(\mu-x)$ and $b(y)=-\beta y$ change sign within the truncated domain. 
        Near the boundary, if a numerical stencil extends outside the domain -- e.g., requiring values such as $V_{-1,j}$ -- linear extrapolation is employed, see Section \ref{sec3.3.3}. This extrapolation is consistent with the imposed linear boundary conditions. We note that, in Section \ref{5.2}, Dirichlet boundary conditions are prescribed, eliminating the need to address extrapolation at the boundaries in that context, in particular that the convection coefficients $a$ and $b$ are positive near $x_{\min}, y_{\min}$ and negative near $x_{\max}, y_{\max}$.
 
    \subsection{Convergence of CNFI and DIRKFI}\label{5.1}
    The theorem below deals with the convergence in the $\ell_{\infty}$-norm of the CNFI scheme \eqref{CNfpi} to the Crank--Nicolson scheme:
    \begin{equation}
        (I-\tfrac{1}{2}\Delta t A)V^{n + 1}=(I+\tfrac{1}{2}\Delta t A)V^{n}+\tfrac{1}{2}\Delta
        t(BV^{n}+BV^{n
        + 1}). \label{CNFII}
    \end{equation}
    \begin{theorem}\label{thcnfi} 
        Let $\varepsilon_{\ell} = V^{n+1}-Y_{\ell}$ where $V^{n+1}$ is given by \eqref{CNFII} and $Y_{\ell}$ is given by \eqref{CNfpi}. Let
        \[
        \kappa_x = (|w_2|+|w_1|+|w_{-1}|+w_0)\max_i | a_{i}|\quad \text{and} \quad \kappa_y = (|w_2|+|w_1|+|w_{-1}| + w_0)\max_j |b_{j}|,
        \]
        where the weights $w_{-1},w_0,w_1,w_2$ satisfy \eqref{W3}. If $\kappa_x\frac{\Delta t}{2\Delta x}+\kappa_y\frac{\Delta t}{2\Delta y}< 1 + \frac{\Delta t}{2}r$, then the CNFI scheme \eqref{CNfpi} converges to the Crank--Nicolson scheme \eqref{CNFII} in the $\ell_{\infty}$-norm and $$|| \varepsilon_{\ell}||_{\infty}\leq
        \Theta\, || \varepsilon_{\ell-1}||_{\infty} \quad \textrm{with} ~~\Theta = \dfrac{\frac{\Delta t}{2}\lambda}{1 + \frac{\Delta t}{2}(r+\lambda)-(\kappa_x\frac{\Delta t}{2\Delta x}+\kappa_y\frac{\Delta t}{2\Delta y})}
        <1.$$
    \end{theorem}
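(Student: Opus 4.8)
The plan is to derive a recursive inequality for the iteration errors $\varepsilon_\ell$ directly from the defining equations \eqref{CNFII} and \eqref{CNfpi}, and then bound the inverse of the implicit operator $I - \tfrac12\Delta t A$ in the $\ell_\infty$-norm. Subtracting \eqref{CNfpi} from \eqref{CNFII} cancels all terms involving $V^n$ and $BV^n$, leaving
\[
(I - \tfrac12\Delta t A)\,\varepsilon_\ell = \tfrac12\Delta t\, B\,(V^{n+1} - Y_{\ell-1}) = \tfrac12\Delta t\, B\,\varepsilon_{\ell-1}.
\]
So the whole estimate reduces to controlling $\|(I-\tfrac12\Delta t A)^{-1}\|_\infty$ and $\|B\|_\infty$. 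Since $B = I_1\otimes\lambda\tilde B$ and $\tilde B$ has nonnegative entries (being quadrature weights of the density $f\ge 0$) with row sums $\int f = 1$, we get $\|B\|_\infty = \lambda$ (or $\le\lambda$ after truncation), contributing the factor $\tfrac12\Delta t\lambda$ in the numerator of $\Theta$.

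The main work is the bound on the implicit operator. First I would write $A = A^D + A_x + A_y$ explicitly at an interior grid point $(x_i,y_j)$ as a finite-difference stencil, using the general convection discretisation \eqref{eqFVij}. The key observation is that, under the condition \eqref{W3}, the diagonal contribution of the convection part at node $(i,j)$ is $-|a_i|\,w_0/\Delta x - |b_j|\,w_0/\Delta y$ and the sum of absolute values of the off-diagonal convection entries is $|a_i|(|w_2|+|w_1|+|w_{-1}|)/\Delta x + |b_j|(|w_2|+|w_1|+|w_{-1}|)/\Delta y$ (here using $w_2\le 0$ so that $|w_2|=-w_2$; the sign pattern of the upwind stencil matters here). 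The diffusion part $\tfrac{\sigma^2}{2}D_2$ has zero row sums and a nonpositive diagonal, so in the combination $I - \tfrac12\Delta t A$ the diagonal entry is at least $1 + \tfrac12\Delta t(r+\lambda) + (\text{nonnegative diffusion term}) - \tfrac12\Delta t\, w_0(|a_i|/\Delta x + |b_j|/\Delta y)$ (note $w_0$ can be negative for QUICK, which is why $w_0$ appears without absolute value in $\kappa_x,\kappa_y$), while the sum of absolute values of the off-diagonal entries in that row is bounded by the diffusion off-diagonals plus $\tfrac12\Delta t(|w_2|+|w_1|+|w_{-1}|)(|a_i|/\Delta x + |b_j|/\Delta y)$. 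The diffusion off-diagonals cancel against the diffusion diagonal. Hence the matrix $I - \tfrac12\Delta t A$ is strictly diagonally dominant by rows with excess at least
\[
1 + \tfrac12\Delta t(r+\lambda) - \kappa_x\tfrac{\Delta t}{2\Delta x} - \kappa_y\tfrac{\Delta t}{2\Delta y},
\]
which is positive precisely under the stated hypothesis $\kappa_x\tfrac{\Delta t}{2\Delta x} + \kappa_y\tfrac{\Delta t}{2\Delta y} < 1 + \tfrac{\Delta t}{2}r$ (since $\lambda\ge 0$). By the standard estimate for strictly diagonally dominant matrices, $\|(I-\tfrac12\Delta t A)^{-1}\|_\infty$ is at most the reciprocal of this excess.

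Combining the two bounds gives $\|\varepsilon_\ell\|_\infty \le \Theta\,\|\varepsilon_{\ell-1}\|_\infty$ with exactly the stated $\Theta$, and $\Theta<1$ follows because the numerator $\tfrac12\Delta t\lambda$ is strictly less than the denominator $1 + \tfrac12\Delta t(r+\lambda) - \kappa_x\tfrac{\Delta t}{2\Delta x} - \kappa_y\tfrac{\Delta t}{2\Delta y}$, which is equivalent to the hypothesis. I expect the main obstacle to be the careful bookkeeping of the convection stencil entries near boundary rows, where the extrapolation (e.g.\ $V_{m_1+1,j} = 2V_{m_1,j} - V_{m_1-1,j}$) modifies the stencil; one must check that the modified rows remain diagonally dominant with at least the same excess, or invoke the remark that in the $\ell_\infty$-analysis the sign conditions on $a,b$ near the boundary ensure the one-sided (interior-reaching) stencils are used there, so no extrapolated ghost values enter. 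A secondary point to handle cleanly is that $w_0$ may be negative (QUICK), so the diagonal convection contribution $-w_0\,|a_i|/\Delta x$ could actually *help* diagonal dominance; writing $\kappa_x = (|w_2|+|w_1|+|w_{-1}|+w_0)\max_i|a_i|$ with $w_0$ un-absolute-valued is what makes the bound tight, and I would double-check the case analysis $a_i>0$ vs.\ $a_i<0$ to confirm the coefficient of $|a_i|/\Delta x$ in the off-diagonal-minus-diagonal balance is exactly $|w_2|+|w_1|+|w_{-1}|+w_0$.
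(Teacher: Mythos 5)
Your proposal is correct and follows essentially the same route as the paper's proof: subtract the two schemes to obtain $(I-\tfrac12\Delta t A)\varepsilon_\ell=\tfrac12\Delta t\,B\varepsilon_{\ell-1}$, bound $\|(I-\tfrac12\Delta t A)^{-1}\|_\infty$ by row-wise strict diagonal dominance (the Varah bound), with the convection diagonal entering as $w_0$ without absolute value and the diffusion diagonal/off-diagonals cancelling, and bound $\|B\|_\infty\le\lambda$ via the quadrature-weight row sums of $\tilde B$.
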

    \begin{proof}
        From \eqref{CNfpi} and \eqref{CNFII}, we obtain:
        \begin{align*}
            || \varepsilon_{\ell}||_{\infty}=||V^{n+1}-Y_{\ell}||_{\infty} & = \tfrac{1}{2}\Delta t||(I-\tfrac{1}{2}\Delta t A)^{-1}B(V^{n+1}-Y_{\ell-1})||_{\infty} \\
                                       & \leq \tfrac{1}{2}\Delta t||(I-\tfrac{1}{2}\Delta t A)^{-1}||_{\infty}||B\varepsilon_{\ell-1}||_{\infty} .
        \end{align*}
         If the matrix $I-\frac{1}{2}\Delta t A$ is diagonally dominant, then one has the following bound by \cite{VARAH19753}:
        \begin{equation*}
            ||(I-\tfrac{1}{2}\Delta t A)^{-1}||_{\infty}\leq \frac{1}{\min_{i} \{|c_{ii}|-\sum_{j\neq i}|c_{ij}| \}},
        \end{equation*}
        where $c_{i,j}$ denote the elements of $I-\tfrac{1}{2}\Delta t A$.
        
        For any given  $i\in \{0,1,\ldots, (m_1+1)\cdot (m_2+1)-1\}$, there exists a tuple $(k_{i},l_{i})\in \{0,1,\ldots,m_1\}\times\{0,1,\ldots,m_2\}$ such that
        \begin{align*}
        |c_{i,i}| = \Bigl|1 + \tfrac{1}{2}\Delta t\Bigl(\delta_i\frac{\sigma^2}{\Delta x^2} +  (r+\lambda) -\frac{|a_{k_{i}}|}{\Delta x}w_0 -\frac{|b_{l_{i}}|}{\Delta y}w_0\Bigl) \Bigl|,
        \end{align*}
        and
        \begin{align*}
            \sum_{j\neq i}|c_{ij}|\leq \tfrac{1}{2}\Delta t\Bigl(\delta_i\frac{\sigma^2}{\Delta x^2} + \frac{|a_{k_{i}}|}{\Delta x}(|w_2|+|w_1|+|w_{-1}|)  + \frac{|b_{l_{i}}|}{\Delta y}(|w_2|+|w_1|+|w_{-1}|) \Bigl),
        \end{align*}
        where $\delta_i=1$ if $i\in\{m_2+1,\ldots,m_1(m_2+1)-1\}$ and $\delta_i=0$ otherwise, which corresponds to the linear boundary condition.
        
        Noting that $|w_2|+|w_1|+|w_{-1}|+w_0\geq \sum_{k=-1}^2w_k=0$ and $|c_{i,i}|\geq c_{i,i}$, it follows that
        \begin{align*}
            |c_{i,i}| -  \sum_{j\neq i}|c_{ij}|&\geq 1 + \tfrac{1}{2}\Delta t(r+\lambda) -  \tfrac{1}{2}\Delta t\Bigl(\frac{|a_{k_{i}}|}{\Delta x}(|w_2|+|w_1|+|w_{-1}|)+ \frac{|a_{k_{i}}|}{\Delta x}w_0\Bigl)\\
            &\ \qquad \qquad - \tfrac{1}{2}\Delta t\Bigl(\frac{|b_{l_{i}}|}{\Delta y}(|w_2|+|w_1|+|w_{-1}|)+\frac{|b_{l_{i}}|}{\Delta y}w_0\Bigl)\\
            &\geq 1 + \tfrac{1}{2}\Delta t(r+\lambda)-\Bigl(\kappa_x\frac{\Delta t}{2\Delta x}+\kappa_y\frac{\Delta t}{2\Delta y}\Bigl).
        \end{align*}
        If $\kappa_x\frac{\Delta t}{2\Delta x}+\kappa_y\frac{\Delta t}{2\Delta y}< 1 + \frac{\Delta t}{2} r$, the matrix $I-\frac{1}{2}\Delta t A$ is diagonally dominant and thus 
        \begin{equation*}
            ||(I-\tfrac{1}{2}\Delta t A)^{-1}||_{\infty}\leq \frac{1}{1 + \tfrac{1}{2}\Delta t(r+\lambda)-(\kappa_x\frac{\Delta t}{2\Delta x}+\kappa_y\frac{\Delta t}{2\Delta y})}.
        \end{equation*}
        By \eqref{eqAtilde} we have
        \begin{align*}
            \sum_{\ell =0}^{m_2}|\Tilde{B}_{j,\ell}|
            &= \sum_{\ell = 0}^{m_2-1} \frac{y_{\ell+1}-y_{\ell}}{\Delta y}\int_{y_{\ell}}^{y_{\ell+1}}f(\xi-y_{j})d\xi \\
            & =\sum_{\ell = 0}^{m_2-1}\int_{y_{\ell}}^{y_{\ell+1}}f(\xi-y_{j})d\xi \\
            &\leq \int_{-\infty}^{\infty}f(\xi)d\xi = 1,
        \end{align*}
        whenever $0\leq j\leq m_2$, which implies that $||B||_{\infty}\leq \lambda$ and, hence, $||B\varepsilon_{\ell-1}||_{\infty}
        \leq \lambda ||\varepsilon_{\ell-1}||_{\infty}$.
        \newline
        Combining the above inequalities, we get the stated bound on the error $||\varepsilon_{\ell}||_{\infty}$.
    \end{proof}

   In Theorem \ref{thcnfi}, the sufficient condition for the convergence of the fixed-point iteration takes a CFL-like form, imposing constraints on the time step $\Delta t $ and the spatial mesh widths $ \Delta x $ and $ \Delta y$. However, in our numerical experiments in Section \ref{sec6}, no restriction on $\Delta t$ was observed for the convergence of the fixed-point iteration.

   We remark that for the convergence of the DIRKFI scheme \eqref{DIRK} to the DIRK scheme a completely similar result is obtained. For the sake of brevity, we omit the details.

    \subsection{Stability and convergence study}\label{5.2}

    For the purpose of this theoretical investigation, we impose on the PIDE \eqref{Eq:1} Dirichlet boundary conditions in both directions, thus for some given functions $u_1$, $u_2$ and $v_1$, $v_2$:
    \begin{align}
        v(x_{-1},y,t) &=        u_{1}(y,t),\quad v(x_{m_1+1},y,t) = u_{2}(y,t)  ,    \notag\\                             
        v(x,y_{-1},t) &=        v_{1}(x,t),\quad v(x,y_{m_2+1},t) = v_{2}(x,t)  ,\label{DirciheltCondition}
    \end{align}
    where $x_{-1}, x_{m_1+1}, y_{-1},y_{m_2+1}$ are introduced to be on the boundary of the domain such that $x_{\min}=x_{-1}<x_0<\cdots<x_{m_1}<x_{m_1+1}=x_{\max}$ and $y_{\min}=y_{-1}<y_0<\cdots<y_{m_2}<y_{m_2+1}=y_{\max}$ are uniform meshes in the $x$- and $y$- direction respectively.
    
        Now, the semidiscrete system takes the form
        \begin{align}\label{SD}
        \frac{d V}{dt}(t)&=(A + B)V(t) + g(t),\quad t>0\\
        V(0) &=\max(\hat{x}\otimes e_{y}+ e_{x}\otimes \hat{y} - K\cdot e_{x}\otimes e_{y},0),
    \end{align}
    where $\hat{x},\hat{y},e_{x},e_{y}$ are defined in Section \ref{sec3.3.3}  and $A$ and $B$ are the matrices defined in \eqref{A} and \eqref{eqAtilde} respectively but taking into account the Dirichlet boundary conditions instead of the linear boundary conditions. The vector $g(t)$ contains the contribution of the Dirichlet boundary.
\newline
For the stability study, consider the scaled Euclidean product of two vectors $U$ and $V$ of equal size defined by $\langle
    U,V\rangle =\Delta x\Delta y \sum U_{k} V_{k}$ 
    with corresponding $\ell_{2}$-norm $||V||_{2}=\sqrt{\langle V,V\rangle}$ and recall the formula of the logarithmic norm for an $m\times m$-matrix $B$ induced by the $\ell_2$-norm:
    \[
    \mu_2[B] = \max\Bigl\{\frac{\langle BV,V\rangle}{\langle V,V\rangle}\mid V\in \mathbb{R}^m,\; V\neq 0\Bigl\}.
    \]
    First, three lemmas are stated before the main results of stability and convergence are derived.
    \begin{lemma}
        For the jump matrix $B$, the bound $|| B||_2 \leq\lambda\sqrt{ L_{y}
        ||f ||_{\infty}}$ holds with $L_{y}= y_{\max}-y_{\min}$, implying that $\mu_2[B]\leq \lambda\sqrt{ L_{y}
        ||f ||_{\infty}}$ where $ ||f ||_{\infty} = \sup_{\xi\in \mathbb{R}}|f(\xi)|<\infty$.\label{bnd}
    \end{lemma}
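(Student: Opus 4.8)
\emph{Proof proposal.}
The plan is to estimate $\|B\|_2$ by the elementary inequality $\|M\|_2\le\sqrt{\|M\|_1\,\|M\|_\infty}$, valid for every real matrix $M$ (because $\|M\|_2^2$ is the largest eigenvalue of $M^{\top}M$, hence at most $\|M^{\top}M\|_\infty\le\|M^{\top}\|_\infty\,\|M\|_\infty=\|M\|_1\,\|M\|_\infty$, with $\|\cdot\|_1$ the maximal column-sum norm and $\|\cdot\|_\infty$ the maximal row-sum norm), and then to obtain the logarithmic-norm bound from the standard fact $\mu_2[B]\le\|B\|_2$. First I would observe that the scaled inner product $\langle U,V\rangle=\Delta x\,\Delta y\sum_k U_kV_k$ differs from the unscaled one only by the positive constant $\Delta x\,\Delta y$, so the operator $2$-norm it induces coincides with the ordinary spectral norm; hence I may work with the usual Euclidean norm. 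Since $B=I_1\otimes\lambda\tilde B$ is block diagonal with $m_1+1$ identical blocks $\lambda\tilde B$, it then suffices to bound the single block: $\|B\|_\infty=\lambda\|\tilde B\|_\infty$ and $\|B\|_1=\lambda\|\tilde B\|_1$.

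For the maximal row sum, the computation already carried out in the proof of Theorem~\ref{thcnfi} gives $\sum_{\ell=0}^{m_2}|\tilde B_{j,\ell}|=\sum_{\ell=0}^{m_2-1}\int_{y_\ell}^{y_{\ell+1}}f(\xi-y_j)\,d\xi\le 1$ for each $j$, so $\|\tilde B\|_\infty\le 1$. For the maximal column sum I would rewrite, using the formulas in \eqref{eqAtilde}, each entry in nodal form as $\tilde B_{j,\ell}=\int_{\mathbb{R}}\varphi_\ell(\xi)\,f(\xi-y_j)\,d\xi$, where $\varphi_\ell$ is the continuous piecewise-linear nodal basis (``hat'') function at $y_\ell$ -- a full hat of base $2\Delta y$ for $0<\ell<m_2$ and a one-sided hat of base $\Delta y$ at $\ell=0,m_2$. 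Because $0\le\varphi_\ell\le 1$ and $f\ge 0$, every entry satisfies $0\le\tilde B_{j,\ell}\le\|f\|_\infty\int_{\mathbb{R}}\varphi_\ell\le\|f\|_\infty\,\Delta y$ on the uniform mesh, and summing over the $m_2+1$ row indices $j$, together with $(m_2+1)\Delta y<L_y$ (the uniform $y$-mesh of Section~\ref{5.2} has $m_2+2$ cells spanning $[y_{\min},y_{\max}]$), gives $\sum_{j=0}^{m_2}|\tilde B_{j,\ell}|\le(m_2+1)\Delta y\,\|f\|_\infty\le L_y\|f\|_\infty$, i.e. $\|\tilde B\|_1\le L_y\|f\|_\infty$.

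Combining, $\|B\|_2=\lambda\|\tilde B\|_2\le\lambda\sqrt{\|\tilde B\|_1\|\tilde B\|_\infty}\le\lambda\sqrt{L_y\|f\|_\infty}$, and finally, by Cauchy--Schwarz in the scaled inner product, $\mu_2[B]=\max_{V\neq 0}\langle BV,V\rangle/\langle V,V\rangle\le\|B\|_2\le\lambda\sqrt{L_y\|f\|_\infty}$. The only mildly delicate point I anticipate is the bookkeeping near the boundary -- the half-hats at $\ell=0,m_2$ and, in the Dirichlet setting, the transfer of a few boundary entries of $\tilde B$ into $g(t)$ -- but this is harmless: half-hats carry strictly less mass than a full hat and discarding boundary contributions only decreases both the row and column sums, so the stated inequality is unaffected.
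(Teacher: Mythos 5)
Your proposal is correct and follows essentially the same route as the paper: the bound $\|B\|_2\le\lambda\sqrt{\|\tilde B\|_1\|\tilde B\|_\infty}$ via $\rho(\tilde B^{\top}\tilde B)$, the row-sum bound $\|\tilde B\|_\infty\le 1$ recycled from the proof of Theorem \ref{thcnfi}, and the column-sum bound obtained from the per-entry estimate $|\tilde B_{j,\ell}|\le\Delta y\,\|f\|_\infty$ (your hat-function reading of \eqref{eqAtilde} is just a cleaner packaging of the paper's direct integral computation, including the half-size boundary entries). The only cosmetic differences are your explicit remarks on the scaled inner product and on $(m_2+1)\Delta y<L_y$, which the paper leaves implicit.
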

    \begin{proof}
        We recall $B= I_{1}\otimes\lambda \tilde{B}$ with $\tilde{A}$ defined in \eqref{eqAtilde}. Using that $||\tilde{B}||_{\infty}\leq1$ (see the proof of Theorem \ref{thcnfi}), there follows
        \begin{equation}\label{eaqAJbound}
         ||B||^2_{2}=  \rho((B)^{\top}B)                      \\
                            = \lambda^2 \rho((\tilde{B})^{\top}\tilde{B})              \\
                          \leq\lambda^2 ||\tilde{B}||_{1}||\tilde{B}||_{\infty} \\
                        \leq \lambda^2  ||\tilde{B}||_{1}.
        \end{equation}
        For  $0\leq j\leq m_{2}$, $1\leq \ell\leq m_{2}-1$, we have
        \begin{align*}
            |\Tilde{B}_{j,\ell}| & = \frac{1}{\Delta y}\int_{y_{\ell-1}}^{y_\ell}(\xi-y_{\ell-1})f(\xi-y_{j})d\xi+ \frac{1}{\Delta y}  \int_{y_{\ell}}^{y_{\ell+1}}(y_{\ell+1}-\xi)f(\xi-y_{j})d\xi                                   \\
            &\leq 2||f||_{\infty}\Delta y\int_{0}^{1}\xi d\xi\\
            &= \Delta y||f||_{\infty}.
            \end{align*}
        Similarly, we find $|\Tilde{B}_{j,0}|\leq \frac{1}{ 2}\Delta y||f||_{\infty}
        $ and $|\Tilde{B}_{j,m_2}|\leq \frac{1}{ 2}\Delta y||f||_{\infty}$, leading to 
        $ \sum_{j=0}^{m_2}|\Tilde{B}_{j,\ell}|\leq L_{y}
        ||f ||_{\infty}$. Taking the maximum over $\ell$ yields $||\tilde{B}||_1\leq L_{y}
        ||f ||_{\infty}$. Substitution of the latter bound in the inequality \eqref{eaqAJbound} gives the stated result, noticing that $\mu_2[B]\leq ||B||_2$.
    \end{proof}
    \begin{lemma}\label{bound}
        Let $\tilde{a},\tilde{b}$ be any given real numbers. Let $D_x$ be the diagonal matrix given by $D_{x,i,i}=\tilde{a} -\tilde{b}x_i$. Consider any given finite difference scheme of the form \eqref{eqFVij}-\eqref{W3} and let $\tilde{A}_x$ denote the corresponding differentiation matrix for the $x$-direction, defined analogously to before. Then, the following bound holds:
        \[
        \mu_2[D_x\tilde{A}_x]\leq (|w_{-1}|-10w_2)\tilde{b}.
        \]
        The same bound applies in the case of the $y$-direction.
    \end{lemma}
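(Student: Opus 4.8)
The plan is to estimate the logarithmic norm $\mu_2[D_x\tilde A_x]$ directly from its defining variational characterisation, i.e.\ to bound $\langle D_x\tilde A_x V, V\rangle$ from above by $(|w_{-1}|-10w_2)\tilde b\,\langle V,V\rangle$ for every grid vector $V$. Since $\mu_2$ is attained as a maximum of a Rayleigh-type quotient, it suffices to show $\langle D_x\tilde A_x V,V\rangle \le (|w_{-1}|-10w_2)\tilde b\, \|V\|_2^2$. Equivalently, one may symmetrise: $\langle D_x\tilde A_x V,V\rangle = \tfrac12\langle (D_x\tilde A_x + (D_x\tilde A_x)^\top) V,V\rangle$, so the task reduces to bounding the largest eigenvalue of the symmetric part $S:=\tfrac12(D_x\tilde A_x+\tilde A_x^\top D_x)$ by $(|w_{-1}|-10w_2)\tilde b$. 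I would use Gershgorin's circle theorem on $S$, which is natural because the conditions \eqref{W3} on $w_{-1},w_0,w_1,w_2$ control exactly the row sums of $\tilde A_x$.

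First I would write out the entries of $D_x\tilde A_x$ explicitly. Because of the upwinding, row $i$ of $\tilde A_x$ uses the stencil $\{w_2,w_1,w_0,w_{-1}\}/\Delta x$ pointing in the direction where $a_i = \tilde a - \tilde b x_i$ is positive (forward when $a_i>0$, backward when $a_i<0$), each row scaled by $D_{x,i,i}=a_i$. The key structural observation is that along the uniform grid $x_i = x_{-1}+(i+1)\Delta x$, consecutive diagonal entries of $D_x$ differ by exactly $-\tilde b\,\Delta x$, so when one symmetrises, the $\Delta x$ in the denominator of the finite-difference stencil cancels against this $\Delta x$ in the difference of the coefficients $a_i$. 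This is the mechanism that makes the bound $\Delta x$-independent and produces the factor $\tilde b$. Concretely, an off-diagonal entry $(i,k)$ of $S$ has the form $\tfrac12(a_i w_{k-i} + a_k w_{i-k})/\Delta x = \tfrac12\big((a_i-a_k)w_{k-i} + a_k(w_{k-i}+w_{i-k})\big)/\Delta x$, and $a_i - a_k = -\tilde b(i-k)\Delta x$, so the troublesome $1/\Delta x$ disappears from the first piece; the remaining piece involves $a_k(w_{k-i}+w_{i-k})/\Delta x$, which I must handle carefully near the sign change of $a$ and, crucially, appears only between rows that use oppositely-oriented stencils.

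Then I would assemble the Gershgorin estimate: for each row of $S$, the centre is the diagonal entry and the radius is the sum of absolute off-diagonal entries; using $\sum_k w_k = 0$, $\sum_k k w_k = 1$, $\sum_k k^2 w_k = 0$, and $w_2\le 0$ together with $|\tilde a - \tilde b x_i|$-cancellations, the row sum should collapse to a quantity bounded by $(|w_{-1}|-10w_2)\tilde b$. The appearance of the specific combination $|w_{-1}|-10w_2$ suggests that the worst rows are those adjacent to the location where $a$ changes sign (where a forward stencil meets a backward stencil, so the couplings do not cancel in the same clean way), and that the coefficient $10$ arises from counting how many stencil weights reach across that transition — e.g.\ contributions $|w_2|$ at distance $2$ picking up a factor proportional to the index gap, summing to something like $4|w_2|+\ldots$; I would track these terms explicitly. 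I expect \textbf{this near-the-sign-change analysis to be the main obstacle}: away from it everything telescopes and the bound is essentially $(\text{something})\cdot\tilde b$ with a small constant, but at the two or three rows straddling the sign change of $a$ the cancellation between $D_x$-differences and stencil weights is incomplete, and one must bound those residual terms by carefully bounding $|a_i|$ there (it is $O(\tilde b\,\Delta x)$ since $x_i$ is within one mesh width of the zero of $a$) and combining with the stencil weights to recover the clean constant $|w_{-1}|-10w_2$. The $y$-direction case is verbatim the same with $\tilde a = 0$, $\tilde b = \beta$, $x_i \rightsquigarrow y_j$, so it follows without separate argument.
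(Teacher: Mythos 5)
Your opening reduction is fine (bounding the quadratic form, i.e.\ the largest eigenvalue of the symmetric part $S=\tfrac12\bigl(D_x\tilde A_x+\tilde A_x^\top D_x\bigr)$), but the step you rely on --- Gershgorin applied to $S$ --- does not work for the schemes this lemma is meant to cover, and the failure is not confined to the rows near the sign change of $a$. Take an interior row $i$ with $a_i>0$ and $a$ positive at the neighbouring nodes, so all nearby rows use the same (forward) stencil. Then
\begin{equation*}
S_{i,i}=\frac{a_i w_0}{\Delta x},\qquad
S_{i,i\pm1}=\frac{a_i w_{\pm1}+a_{i\pm1}w_{\mp1}}{2\Delta x},\qquad
S_{i,i+2}=\frac{a_i w_2}{2\Delta x},\qquad
S_{i,i-2}=\frac{a_{i-2} w_2}{2\Delta x},
\end{equation*}
so, up to $O(\tilde b)$ corrections from $a_{i\pm1}-a_i=\mp\tilde b\,\Delta x$, the Gershgorin bound for this row is $\tfrac{a_i}{\Delta x}\bigl(w_0+|w_2|+|w_1+w_{-1}|\bigr)$. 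Your claimed cancellation of the $1/\Delta x$ requires $w_{k-i}+w_{i-k}=0$, i.e.\ $w_1+w_{-1}=0$, which holds only for the central scheme; it is false for every upwind-type member of \eqref{eqFVij}--\eqref{W3}. For the second-order upwind scheme $w_0+|w_2|+|w_1+w_{-1}|=-\tfrac32+\tfrac12+2=1>0$, for QUICK it equals $\tfrac14$, for the third-order upwind scheme $\tfrac13$. Hence the Gershgorin estimate for generic interior rows is of size $\max_i|a_i|/\Delta x$ and blows up as $\Delta x\to0$; no $\Delta x$-independent bound of the form $(|w_{-1}|-10w_2)\tilde b$ can be extracted this way, and your premise that ``away from the sign change everything telescopes'' is incorrect. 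The difficulty is structural: the reason $\mu_2[D_x\tilde A_x]$ stays $O(\tilde b)$ is a cancellation \emph{across} rows in the quadratic form, which a row-local eigenvalue inclusion such as Gershgorin cannot see (Gershgorin is only an upper bound, and here it is hopelessly lossy).

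The paper's proof works directly with $\langle D_x\tilde A_xV,V\rangle$: it splits the sum at the index $i_0$ where $a$ changes sign, uses $2V_{i+k}V_i=V_{i+k}^2+V_i^2-(V_{i+k}-V_i)^2$ to isolate a dissipative part $-\tfrac12\sum_i g_i\sum_k w_k(V_{i+k}-V_i)^2$, and then exploits summation by parts together with the telescoping identity $g_{i-k}-g_i=k\tilde b\,\Delta x$ and the moment conditions \eqref{W3}; this is where the $1/\Delta x$ genuinely cancels. Each of the two monotonicity regions contributes a bound with constant $|w_{-1}|-5w_2$, and the constant $10$ arises because the two contributions overlap at the transition indices $i_0-1,i_0,i_0+1$ --- not from counting stencil weights crossing the sign change, as you guessed. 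If you want to salvage your plan, you would have to replace Gershgorin by this kind of energy/summation-by-parts argument (and also handle the Dirichlet boundary corrections involving $V_{-1}$ and $V_{m+1}$, which you do not address); as written, the central step of your proposal fails.
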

    \begin{proof}
    The technical proof of this lemma can be found in Appendix \ref{AppA}. 
    \end{proof}
    
  Denote 
  \begin{equation}\label{constants_C}
  \widetilde{C} = (|w_{-1}|-10w_2)(\alpha+\beta)- (r + \lambda)\quad \text{and}\quad \widehat{C} =\widetilde{C} +\lambda\sqrt{ L_{y}
        ||f ||_{\infty}}.
        \end{equation}
    \begin{lemma}\label{logIneq}
    The logarithmic $\ell_2$-norm of the matrix $A$ in \eqref{A} satisfies the following bound
        \begin{equation}
        \mu_2[A] \leq \widetilde{C}.
        \end{equation}    
    \end{lemma}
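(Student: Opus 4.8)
The plan is to use the decomposition $A = A^D + A_x + A_y$ from \eqref{A} together with subadditivity of the logarithmic norm, $\mu_2[A] \leq \mu_2[A^D] + \mu_2[A_x] + \mu_2[A_y]$, and then bound each of the three pieces separately. For the convection pieces, I would invoke Lemma \ref{bound}: the matrices $A_x = D_x\tilde A_x$ (tensored with $I_2$) and $A_y = I_1 \otimes D_y\tilde A_y$ are exactly of the form treated there, with $\tilde a = \alpha\mu$, $\tilde b = \alpha$ for the $x$-direction and $\tilde a = 0$, $\tilde b = \beta$ for the $y$-direction, so that $\mu_2[A_x] \leq (|w_{-1}|-10w_2)\alpha$ and $\mu_2[A_y] \leq (|w_{-1}|-10w_2)\beta$. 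One small point to check is that tensoring with an identity does not increase the logarithmic norm, i.e. $\mu_2[M \otimes I] = \mu_2[M]$ (and likewise $\mu_2[I \otimes M] = \mu_2[M]$), which follows directly from the definition of $\mu_2$ via the Rayleigh-type quotient and the fact that $(M\otimes I)^\top + (M\otimes I) = (M^\top + M)\otimes I$ has the same eigenvalues as $M^\top + M$.

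For the diffusion-reaction part $A^D = \left(\tfrac{\sigma^2}{2}D_2 - (r+\lambda)I_1\right)\otimes I_2$, I would argue that $\mu_2[A^D] \leq -(r+\lambda)$. The reaction term contributes exactly $-(r+\lambda)$ since $\mu_2[cI] = c$; for the diffusion term it suffices to show $\mu_2[\tfrac{\sigma^2}{2}D_2] \leq 0$, i.e. that the symmetric part of $D_2$ is negative semidefinite. Under the uniform grid assumed throughout Section \ref{5.2}, $D_2$ is the standard symmetric tridiagonal second-difference matrix $\tfrac{1}{\Delta x^2}\mathrm{trid}[1,-2,1]$ (with the appropriate boundary rows under the Dirichlet conditions), which is well known to be negative semidefinite; hence $\langle D_2 V, V\rangle \leq 0$ for all $V$ and $\mu_2[\tfrac{\sigma^2}{2}D_2] \leq 0$. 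Collecting the three bounds gives
\[
\mu_2[A] \leq (|w_{-1}|-10w_2)\alpha + (|w_{-1}|-10w_2)\beta - (r+\lambda) = (|w_{-1}|-10w_2)(\alpha+\beta) - (r+\lambda) = \widetilde{C},
\]
as claimed.

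The main obstacle I anticipate is the bookkeeping at the boundary under the Dirichlet conditions of Section \ref{5.2}. The differentiation matrices $\tilde A_x$, $\tilde A_y$ near $x_{\min}, x_{\max}$ (and correspondingly in $y$) must be the ones that, as noted in the text just before Section \ref{5.1}, do not need extrapolation because the convection coefficients $a$ and $b$ have the "good" sign there ($a, b > 0$ near the lower boundary and $< 0$ near the upper one, so the upwind stencils point into the domain). Lemma \ref{bound} is stated precisely for this setting, so the task is really just to confirm that the matrices appearing in \eqref{Ax}--\eqref{Ay} coincide, after imposing \eqref{DirciheltCondition}, with the ones in the hypothesis of Lemma \ref{bound}; likewise that $D_2$ retains its negative semidefiniteness after the Dirichlet rows are installed (the boundary rows become trivial or strengthen diagonal dominance, so this is fine). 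Once these identifications are in place the proof is a two-line assembly, so I expect the write-up to be short.
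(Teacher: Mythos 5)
Your proposal is correct and follows essentially the same route as the paper's proof: split $A=A^D+A_x+A_y$, use subadditivity of $\mu_2$ together with the Kronecker-product structure, bound the diffusion-reaction block by $-(r+\lambda)$ via negative semidefiniteness of $D_2$, and apply Lemma \ref{bound} to the two convection blocks with $\tilde b=\alpha$ and $\tilde b=\beta$. The extra details you supply (invariance of $\mu_2$ under tensoring with an identity and the boundary bookkeeping under the Dirichlet conditions) are consistent with the paper's setting and only make explicit what the paper leaves implicit.
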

    \begin{proof}
           From the formulas \eqref{eqA}, \eqref{Ax}, \eqref{Ay}, \eqref{A} and properties of the logarithmic norm we obtain
        \begin{align*}
           \mu_2[A] &\leq \mu_2[\Bigl(\frac{\sigma^{2}}{2}D_{2}-(r+\lambda)I_{1}\Bigl)\otimes I_{2}]+\mu_2[D_x\tilde{A}_x\otimes I_2] + \mu_2[I_1\otimes D_y\tilde{A}_y] \\
           &\leq \frac{\sigma^{2}}{2}\mu_2[D_{2}] -(r+\lambda) + \mu_2[D_x\tilde{A}_x] + \mu_2[D_y\tilde{A}_y].
        \end{align*}
        It is easily seen that $\mu_2[D_2]\leq 0$. Thus, by invoking Lemma \ref{bound},
        \begin{equation*}
        \mu_2[A] \leq (|w_{-1}|-10w_2)(\alpha+\beta)-(r+\lambda ) = \widetilde{C}.      
        \end{equation*}     
    \end{proof}
    The CNFI scheme \eqref{CNfpi} adapted to the case of  Dirichlet boundary conditions reads
    \begin{equation}\label{CNFIDirichlet}
        \big(I-\tfrac{1}{2}\Delta t A\big)Y_{\ell}=(I+\tfrac{1}{2}\Delta t A)V^{n} + \tfrac{1}{2}\Delta t(BV^n+BY_{\ell-1})+ \tfrac{1}{2}\Delta t (g_n + g_{n+1}),
       \end{equation}
    where $g_n=g(n\Delta t)$. 
    The Crank--Nicolson scheme is given in this case by
    \begin{equation}\label{CNDirichlet}
        \big(I-\tfrac{1}{2}\Delta t A\big)V^{n + 1}=(I+\tfrac{1}{2}\Delta t A)V^{n} + \tfrac{1}{2}\Delta t(BV^n+BV^{n+1})+ \tfrac{1}{2}\Delta t (g_n + g_{n+1}).
    \end{equation}

\begin{theorem}
        Let $\varepsilon_{\ell} = V^{n+1}-Y_{\ell}$ where $V^{n+1}$ is given by \eqref{CNDirichlet} and $Y_{\ell}$ is given by \eqref{CNFIDirichlet}. If $\widehat{C}\Delta t< 2$, then the CNFI scheme \eqref{CNFIDirichlet} converges to the Crank--Nicolson scheme \eqref{CNDirichlet} in the $\ell_{2}$-norm and
    $$|| \varepsilon_{\ell}||_{2}\leq 
        \Theta\, || \varepsilon_{\ell-1}||_{2} \quad \textrm{with} ~~\Theta = \frac{\tfrac{1}{2}\Delta t\lambda\sqrt{L_y||f||_{\infty}}}{1 - \tfrac{1}{2}\Delta t\widetilde{C}}<1.$$
\end{theorem}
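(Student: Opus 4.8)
The plan is to follow the blueprint of the proof of Theorem~\ref{thcnfi}, replacing the $\ell_\infty$ diagonal-dominance estimate on the resolvent by an $\ell_2$ estimate obtained from the logarithmic norm. First I would subtract \eqref{CNFIDirichlet} from \eqref{CNDirichlet}. Since the boundary contribution $\tfrac12\Delta t(g_n+g_{n+1})$, the term $\tfrac12\Delta t\,BV^n$ and the term $(I+\tfrac12\Delta t A)V^n$ appear identically in both schemes, everything cancels except
\[
(I-\tfrac12\Delta t A)\,\varepsilon_\ell = \tfrac12\Delta t\,B\bigl(V^{n+1}-Y_{\ell-1}\bigr) = \tfrac12\Delta t\,B\,\varepsilon_{\ell-1},
\]
so that $\varepsilon_\ell = \tfrac12\Delta t\,(I-\tfrac12\Delta t A)^{-1}B\,\varepsilon_{\ell-1}$, provided $I-\tfrac12\Delta t A$ is nonsingular.

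Next I would bound $||(I-\tfrac12\Delta t A)^{-1}||_2$. I would invoke the standard estimate relating the resolvent to the logarithmic norm: for $s>0$ with $s\,\mu_2[A]<1$, the matrix $I-sA$ is nonsingular and $||(I-sA)^{-1}||_2\le \bigl(1-s\,\mu_2[A]\bigr)^{-1}$. Applying this with $s=\tfrac12\Delta t$ and using Lemma~\ref{logIneq}, which gives $\mu_2[A]\le\widetilde{C}$, one obtains — as soon as $\tfrac12\Delta t\,\widetilde{C}<1$ —
\[
||(I-\tfrac12\Delta t A)^{-1}||_2\le \frac{1}{1-\tfrac12\Delta t\,\widetilde{C}}.
\]
Combining this with the bound $||B||_2\le\lambda\sqrt{L_y\,||f||_\infty}$ from Lemma~\ref{bnd} yields
\[
||\varepsilon_\ell||_2\le \frac{\tfrac12\Delta t\,\lambda\sqrt{L_y\,||f||_\infty}}{1-\tfrac12\Delta t\,\widetilde{C}}\,||\varepsilon_{\ell-1}||_2 = \Theta\,||\varepsilon_{\ell-1}||_2.
\]

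Finally I would verify that the hypothesis $\widehat{C}\Delta t<2$ makes everything consistent. Since the square-root term in \eqref{constants_C} is nonnegative, $\widetilde{C}\le\widehat{C}$, hence $\tfrac12\Delta t\,\widetilde{C}\le\tfrac12\Delta t\,\widehat{C}<1$, which both guarantees the resolvent bound above and keeps the denominator $1-\tfrac12\Delta t\,\widetilde{C}$ strictly positive. The inequality $\Theta<1$ is equivalent to $\tfrac12\Delta t\,\lambda\sqrt{L_y\,||f||_\infty}<1-\tfrac12\Delta t\,\widetilde{C}$, i.e. to $\tfrac12\Delta t\,(\widetilde{C}+\lambda\sqrt{L_y\,||f||_\infty})<1$, which by \eqref{constants_C} is exactly $\widehat{C}\Delta t<2$. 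Iterating the contraction then shows $Y_\ell\to V^{n+1}$ in the $\ell_2$-norm, so the fixed-point iteration converges. The only point that is not pure bookkeeping is the invocation of the logarithmic-norm resolvent estimate (together with checking positivity of the denominator); once Lemmas~\ref{bnd} and~\ref{logIneq} are in hand, the rest is routine.
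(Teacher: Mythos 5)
Your proposal is correct and follows essentially the same route as the paper: subtract the two schemes, bound $\|(I-\tfrac12\Delta t A)^{-1}\|_2$ via the logarithmic norm (the paper cites this resolvent estimate as the von Neumann theorem together with Lemma~\ref{logIneq}), and bound $\|B\|_2$ by Lemma~\ref{bnd}. Your explicit verification that $\widehat{C}\Delta t<2$ implies $\tfrac12\Delta t\,\widetilde{C}<1$ (since $\widetilde{C}\leq\widehat{C}$) and that $\Theta<1$ is equivalent to $\widehat{C}\Delta t<2$ merely spells out steps the paper leaves implicit.
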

\begin{proof}
Subtracting \eqref{CNFIDirichlet} from \eqref{CNDirichlet} yields
\begin{align*}
            || \varepsilon_{\ell}||_{2}=||V^{n+1}-Y_{\ell}||_{2} & = \tfrac{1}{2}\Delta t||(I-\tfrac{1}{2}\Delta t A)^{-1}B(V^{n+1}-Y_{\ell-1})||_{2} \\
                                       &\leq \tfrac{1}{2}\Delta t||(I-\tfrac{1}{2}\Delta t A)^{-1}||_{2}||B||_{2} ||\varepsilon_{\ell-1}||_2.
        \end{align*}  
        By Lemma \ref{logIneq}, we have $\frac{\Delta t}{2}\mu_2[A]\leq  \frac{\Delta t}{2}\widetilde{C}<1$. Hence, by von Neumann theorem, see \citealp[Section IV.11]{HW},
 \[||(I-\tfrac{1}{2}\Delta t A)^{-1}||_{2}\leq \frac{1}{1-\tfrac{1}{2}\Delta t\mu_2[A]}\leq \frac{1}{1-\tfrac{1}{2}\Delta t\widetilde{C}}.
        \]
Combining this with the bound of Lemma \ref{bnd} for $||B||_2$ and using \eqref{constants_C}, we arrive at the stated result.
\end{proof}

The theorem above addresses the convergence of the fixed-point iteration in the $\ell_2$-norm and differs from Theorem \ref{thcnfi}, which deals with its convergence in the $\ell_{\infty}$-norm. 
Since convergence in the $\ell_{\infty}$-norm is a stronger requirement, it leads to a more restrictive CFL-like condition involving both the time step $\Delta t$ and spatial mesh widths $\Delta x$ and $\Delta y$. On the other hand, the $\ell_{2}$-norm analysis guarantees convergence under a milder condition, imposing a restriction only on the time step $\Delta t$.
We remark again that for the convergence of the DIRKFI scheme \eqref{DIRK} to the DIRK scheme a completely similar result is obtained.

We focus now on the stability and convergence of the Crank--Nicolson scheme \eqref{CNDirichlet}. The stability function of the Crank--Nicolson scheme is
    \begin{equation}
        R(z)=\frac{1+\frac{1}{2}z}{1-\frac{1}{2}z} \quad (z\in\mathbb{C}).
    \end{equation}
    Let $G(x) = \sup_{Re(z)\leq x} |R(z)|$ be the so-called error growth function. It is known that, see \cite{HW},
     \begin{equation*}
    G(x)\leq 
        \begin{cases}
            1 & \text{if } x\leq 0\\
            1 + 2x & \text{if } 0\leq x\leq 1.
        \end{cases} 
    \end{equation*}
    
    \begin{theorem}\label{Theo5.5}
        Let $\widehat{C}^+=\max(\widehat{C},0)$. The Crank--Nicolson scheme in  \eqref{CNDirichlet} is unconditionally stable in the $\ell_2$-norm: 
        \[||R(\Delta t(A+B))||^n_{2}\leq e^{2T\widehat{C}^+} \; \text{whenever}\quad n=0,1,2,\ldots, \;\text{with}\; n\Delta t\leq T,\;\; \hat{C}\Delta t\leq 1.
        \]
    \end{theorem}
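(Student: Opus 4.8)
The plan is to bound $\|R(\Delta t(A+B))^n\|_2$ by splitting the matrix $A+B$ into its dissipative part $A$ and the bounded perturbation $B$, and then to use the known estimate for the Crank--Nicolson stability function together with the logarithmic-norm bounds from Lemmas \ref{bnd} and \ref{logIneq}. First I would invoke Lemma \ref{logIneq} and Lemma \ref{bnd} to get $\mu_2[A+B]\leq \mu_2[A]+\mu_2[B]\leq \widetilde{C}+\lambda\sqrt{L_y\|f\|_\infty}=\widehat{C}$. The assumption $\widehat{C}\Delta t\leq 1$ then places $\Delta t(A+B)$ in a half-plane $\operatorname{Re}(z)\leq \widehat{C}\Delta t\leq 1$ in the sense of logarithmic norms, so one is tempted to write $\|R(\Delta t(A+B))\|_2\leq G(\widehat{C}\Delta t)\leq 1+2\widehat{C}\Delta t$ directly. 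The subtlety is that the error growth function bound $\|R(\Delta t M)\|_2\leq G(\Delta t\mu_2[M])$ is exactly the Kraaijevanger/von Neumann-type result one needs; assuming it (it is the standard fact from \cite{HW} that the paper is clearly relying on), this is the key step.

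Next I would handle the two cases of the sign of $\widehat{C}$. If $\widehat{C}\leq 0$, then $\widehat{C}^+=0$, $\mu_2[A+B]\leq 0$, and $G(\widehat{C}\Delta t)\leq 1$, giving $\|R(\Delta t(A+B))^n\|_2\leq 1 = e^{2T\widehat{C}^+}$ trivially. If $\widehat{C}>0$, then from $G(\widehat{C}\Delta t)\leq 1+2\widehat{C}\Delta t$ (valid since $0\leq \widehat{C}\Delta t\leq 1$) and submultiplicativity of the norm,
\[
\|R(\Delta t(A+B))^n\|_2 \leq (1+2\widehat{C}\Delta t)^n \leq e^{2n\widehat{C}\Delta t} \leq e^{2T\widehat{C}} = e^{2T\widehat{C}^+},
\]
using $1+u\leq e^u$ and $n\Delta t\leq T$. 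Combining the two cases gives the claimed unconditional stability bound.

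The main obstacle I anticipate is justifying the passage from the scalar estimate $G(x)\leq 1+2x$ to the matrix estimate $\|R(\Delta t(A+B))\|_2\leq G(\Delta t\,\mu_2[A+B])$ when $A+B$ is not normal. This is precisely the content of the resolvent/von Neumann argument (sometimes attributed to Kraaijevanger or found in \cite{HW}, Section IV.11), which says that for a rational function $R$ bounded by $1$ on a left half-plane, a suitable bound on $\|R(\Delta t M)\|$ holds in terms of $\mu_2[M]$; for Crank--Nicolson specifically one uses that $R$ is a Möbius map and estimates $\|(I-\tfrac12\Delta t M)^{-1}\|_2$ via $\mu_2[M]$ as was already done in the preceding theorem. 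Since the paper explicitly introduces $G$ and cites \cite{HW} for its bound, I would simply cite that result and apply it; the remaining steps are the elementary case analysis and the exponential estimate above. A minor point to note is that the hypothesis as stated requires only $\widehat{C}\Delta t\leq 1$ (not the strict inequality $\widehat{C}\Delta t<2$ needed for the fixed-point convergence theorem), and one should check that $G$'s bound is being applied on the correct range $[0,1]$, which is exactly guaranteed by $\widehat{C}^+\Delta t\leq 1$.
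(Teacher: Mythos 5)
Your proposal is correct and follows essentially the same route as the paper: Lemmas \ref{bnd} and \ref{logIneq} give $\mu_2[\Delta t(A+B)]\leq \Delta t\widehat{C}$, the von Neumann theorem from \cite{HW} yields $\|R(\Delta t(A+B))\|_2\leq G(\widehat{C}\Delta t)$, and the bound $(1+2\widehat{C}^+\Delta t)^n\leq e^{2T\widehat{C}^+}$ finishes the argument. The paper simply absorbs your two sign cases into the single step $G(\widehat{C}\Delta t)\leq G(\widehat{C}^+\Delta t)\leq 1+2\widehat{C}^+\Delta t$, so no substantive difference remains.
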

    \begin{proof}
        From Lemmas \ref{bnd} and \ref{logIneq}, we get $\mu_2[\Delta t(A+B)]\leq \Delta t\widehat{C}$. Then, by von Neumann theorem, see \citealp[Section IV.11]{HW}, \[||R(\Delta t(A+B))||_{2}\leq G(\hat{C}\Delta t)\leq G(\hat{C}^+\Delta t).\]
        Thus, 
         \[||R(\Delta t(A+B))||^n_{2}\leq (1+2\Delta t\widehat{C}^+)^n\leq e^{2T\widehat{C}^+}.\]
    \end{proof}
    Theorem \ref{Theo5.5} establishes the stability of the Crank--Nicolson scheme in the $\ell_2$-norm. In the following, we turn to the study of its convergence under the assumption of sufficient smoothness of the exact solution. The proof of the subsequent theorem follows along the lines of \cite{HV}. A convergence analysis without the smoothness assumption typically requires monotonicity of the scheme and relies upon the framework of viscosity solutions, see \cite{Viscosity}. This aspect will be addressed in future research. 

Denote \[
        v_{\Delta x, \Delta y}(t)=(v(x_0,y_0,t),v(x_0,y_1,t),\ldots,v(x_{m_1},y_{m_2-1},t),v(x_{m_1},y_{m_2},t))\in \mathbb{R}
        ^{(m_1+1)(m_2+1)}.
    \]    
    \begin{theorem}\label{Theo5.7}
        Under sufficient smoothness of $v_{\Delta x, \Delta y}$ and second-order consistency of the semidiscretisation, the Crank--Nicolson scheme \eqref{CNDirichlet} applied to the semidiscrete system \eqref{SD} is convergent in the $\ell_2$-norm. Moreover, the global spatial-temporal error $\hat{\epsilon}_n = v_{\Delta x, \Delta y}(t_n) - V^n$ satisfies the following bound for some positive constant $C$: 
        \[
        ||\hat{\epsilon}_n||_2\leq C(\Delta t^2 + \Delta x^2 + \Delta y^2) \; \text{whenever}\quad n=0,1,2,\ldots, \;\text{with}\; n\Delta t\leq T,\;\; \hat{C}\Delta t\leq 1.
        \]
    \end{theorem}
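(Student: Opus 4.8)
The plan is to follow the classical convergence analysis for one-step methods applied to a semidiscrete system (cf.\ \cite{HV}), combining the consistency of the Crank--Nicolson time-stepping with the stability result of Theorem \ref{Theo5.5}. Write the exact semidiscrete solution $v_{\Delta x,\Delta y}(t)$ as an approximate solution of the scheme \eqref{CNDirichlet}: substituting $v_{\Delta x,\Delta y}(t_n)$ into \eqref{CNDirichlet} produces a local residual (defect) $\rho_n$, so that
\[
(I-\tfrac{1}{2}\Delta t (A+B))v_{\Delta x,\Delta y}(t_{n+1}) = (I+\tfrac{1}{2}\Delta t(A+B))v_{\Delta x,\Delta y}(t_n) + \tfrac{1}{2}\Delta t(g_n+g_{n+1}) + \Delta t\,\rho_n.
\]
Subtracting the scheme \eqref{CNDirichlet} for $V^n$ gives the error recursion $\hat\epsilon_{n+1} = R(\Delta t(A+B))\hat\epsilon_n + \Delta t\,(I-\tfrac12\Delta t(A+B))^{-1}\rho_n$, which unrolls to $\hat\epsilon_n = R(\Delta t(A+B))^n\hat\epsilon_0 + \Delta t\sum_{k=0}^{n-1} R(\Delta t(A+B))^{n-1-k}(I-\tfrac12\Delta t(A+B))^{-1}\rho_k$. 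Since $\hat\epsilon_0 = 0$, only the defect sum remains.

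Next I would bound each factor in the $\ell_2$-norm. By Theorem \ref{Theo5.5}, $\|R(\Delta t(A+B))^{m}\|_2 \le e^{2T\widehat C^+}$ uniformly for $m\Delta t\le T$ under $\widehat C\Delta t\le 1$; and since $\tfrac12\Delta t\mu_2[A+B]\le\tfrac12\Delta t\widehat C\le\tfrac12<1$, the von Neumann theorem \citep[Section IV.11]{HW} gives $\|(I-\tfrac12\Delta t(A+B))^{-1}\|_2\le (1-\tfrac12\Delta t\widehat C)^{-1}\le 2$. Hence $\|\hat\epsilon_n\|_2 \le 2\,e^{2T\widehat C^+}\,\Delta t\sum_{k=0}^{n-1}\|\rho_k\|_2 \le 2T\,e^{2T\widehat C^+}\max_{0\le k\le n-1}\|\rho_k\|_2$. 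It therefore remains to estimate the defect $\rho_k$.

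The defect splits naturally into the temporal consistency error of Crank--Nicolson and the spatial (semidiscretisation) consistency error. For the temporal part, a Taylor expansion of $v_{\Delta x,\Delta y}$ around $t_{n+1/2}$ shows that the Crank--Nicolson defect applied to the ODE system $V'=(A+B)V+g$ is $O(\Delta t^2)$, with a constant controlled by $\|v_{\Delta x,\Delta y}'''(t)\|_2$ and $\|(A+B)\,v_{\Delta x,\Delta y}''(t)\|_2$ over $[0,T]$; the assumed smoothness of $v_{\Delta x,\Delta y}$ makes these bounded uniformly in $\Delta x,\Delta y$ (this is where the smoothness hypothesis is used). For the spatial part, second-order consistency of the semidiscretisation means $\frac{d}{dt}v_{\Delta x,\Delta y}(t) - (A+B)v_{\Delta x,\Delta y}(t) - g(t) = O(\Delta x^2+\Delta y^2)$ in the $\ell_2$-norm, with a constant depending on the relevant spatial derivatives of the exact PIDE solution $v$ — this covers the diffusion, convection (the schemes \eqref{eqFVij}--\eqref{W3} being at least second order) and the integral term (the linear-interpolation quadrature in Section \ref{sec3.2} being second order for bounded density $f$). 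Feeding $\max_k\|\rho_k\|_2\le C_1\Delta t^2 + C_2(\Delta x^2+\Delta y^2)$ into the stability estimate yields $\|\hat\epsilon_n\|_2\le C(\Delta t^2+\Delta x^2+\Delta y^2)$ with $C = 2T\,e^{2T\widehat C^+}\max(C_1,C_2)$, as claimed.

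The main obstacle is the honest treatment of the defect bound: one must check that the constants in both the temporal Taylor remainder and the spatial truncation error are genuinely independent of $\Delta x,\Delta y$ (and $n$), which is precisely what "sufficient smoothness of $v_{\Delta x,\Delta y}$'' and "second-order consistency of the semidiscretisation'' are postulated to provide — without monotonicity and a viscosity-solution argument, the nonsmooth initial data in \eqref{v0} would otherwise destroy this, as remarked after Theorem \ref{Theo5.5}. The remaining steps (unrolling the recursion, applying Theorems \ref{Theo5.5} and the von Neumann bound, summing the geometric-type series) are routine.
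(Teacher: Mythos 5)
Your proposal is correct and follows essentially the same route as the paper: insert the restriction $v_{\Delta x,\Delta y}(t_n)$ into the scheme to define a local defect, unroll the resulting error recursion using $\hat\epsilon_0=0$, bound the powers of $R(\Delta t(A+B))$ by the stability Theorem \ref{Theo5.5} and the resolvent $(I-\tfrac12\Delta t(A+B))^{-1}$ by the von Neumann bound under $\widehat{C}\Delta t\leq 1$, and split the defect into the trapezoidal (temporal) Taylor remainder of order $\Delta t^2$ and the spatial consistency error of order $\Delta x^2+\Delta y^2$. The only cosmetic difference is bookkeeping: the paper folds the resolvent into its local error $\hat\delta_{i+1}$ while you keep it in the recursion, and the temporal remainder in fact needs only bounds on time derivatives of $v_{\Delta x,\Delta y}$ (your mention of $\|(A+B)v_{\Delta x,\Delta y}''\|_2$ is unnecessary), which does not affect the argument.
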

    \begin{proof}
    The Crank--Nicolson scheme \eqref{CNDirichlet} can be written as:
    \[V^{n+1} = R(\Delta t (A+B))V^{n} + \tfrac{1}{2}\Delta t(I-\tfrac{1}{2}\Delta t(A+B))^{-1}(g_{n}+g_{n+1}).
    \]
    Let the local spatial-temporal error $\hat{\delta}_{n+1}$ be defined by
    \[
    v_{\Delta x, \Delta y}(t_{n+1}) = R(\Delta t (A+B))v_{\Delta x, \Delta y}(t_{n}) + \tfrac{1}{2}\Delta t(I-\tfrac{1}{2}\Delta t(A+B))^{-1}(g_{n}+g_{n+1}) + \hat{\delta}_{n+1}.
    \]
    Subtracting the two equalities and noting that $\hat{\epsilon}_{0}=0$ leads to
    \[
    \hat{\epsilon}_{n+1} = R(\Delta t (A+B))\hat{\epsilon}_{n} + \hat{\delta}_{n+1} = \cdots=\sum_{i=1}^{n+1}R(\Delta t (A+B))^{n+1-i}\hat{\delta}_{i}.
    \]
    Then, by virtue of Theorem \ref{Theo5.5},
    \begin{equation}\label{epsiBond}
            ||\hat{\epsilon}_{n+1}||_2\leq e^{2T\widehat{C}^+}(n+1)\max_{1\leq i\leq n+1}||\hat{\delta}_i||_2.
    \end{equation}
    For any integer $i\geq 0$ with $(i+1)\Delta t\leq T$, it holds that
    \begin{align*}
        \hat{\delta}_{i+1} &= \tfrac{1}{2}\Delta t(I-\tfrac{1}{2}\Delta t(A+B))^{-1}( \delta(t_{i}) + \delta(t_{i+1}))\\
        &\quad+ (I-\tfrac{1}{2}\Delta t(A+B))^{-1}\Bigl(v_{\Delta x, \Delta y}(t_{i+1})-v_{\Delta x, \Delta y}(t_{i}) - \tfrac{1}{2}\Delta t\Bigl(\frac{dv_{\Delta x, \Delta y}}{dt}(t_{i+1})+\frac{dv_{\Delta x, \Delta y}}{dt}(t_{i})\Bigl)\Bigl),
    \end{align*}
    where \[\delta(t) = \frac{dv_{\Delta x, \Delta y}}{dt}(t) - (A+B)v_{\Delta x, \Delta y}(t)-g(t)\] denotes the local spatial error.
    By assumption, the semidiscretisation is consistent of order two, i.e., there exists a positive constant $C_1$ (independent of $t$, $\Delta x$, $\Delta y$) such that \[||\delta(t)||_2 \le C_2(\Delta x^2+\Delta y^2).\] 
    Next, by the smoothness assumption on $v_{\Delta x, \Delta y}$, Taylor expansion yields for some positive constant $C_2$ (independent of $i$, $\Delta t$, $\Delta x$, $\Delta y$) that 
    \[
    \| v_{\Delta x, \Delta y}(t_{i+1})-v_{\Delta x, \Delta y}(t_{i}) - \tfrac{1}{2}\Delta t\Bigl(\frac{dv_{\Delta x, \Delta y}}{dt}(t_{i+1})+\frac{dv_{\Delta x, \Delta y}}{dt}(t_{i})\Bigr)\|_2 \leq C_2 \Delta t^3.
    \]
    It follows that there exists a positive constant $C_3$ (independent of $i$, $\Delta t$, $\Delta x$, $\Delta y$) such that
    \begin{equation}\label{deltabnd}
        || \hat{\delta}_{i+1}||_2\leq C_3\Delta t||(I-\tfrac{1}{2}\Delta t(A+B))^{-1}||_2(\Delta t^2 + \Delta x^2+\Delta y^2).
    \end{equation}
    From Lemmas \ref{bnd} and \ref{logIneq}, we have $\mu_2[\tfrac{1}{2}\Delta t(A+B)]\leq \tfrac{1}{2}\Delta t\widehat{C}$. Thus, if $\widehat{C}\Delta t\leq 1$, then
    \begin{equation}\label{neumannbnd}
    ||(I-\tfrac{1}{2}\Delta t(A+B))^{-1}||_2\leq \frac{1}{1-\frac{1}{2}\Delta t\widehat{C}}\leq 2.        
    \end{equation}
    Combining the bounds \eqref{epsiBond}, \eqref{deltabnd} and \eqref{neumannbnd} gives
    \[
    ||\hat{\epsilon}_n||_2\leq 2Te^{2T\widehat{C}^+}C_3(\Delta t^2 + \Delta x^2+\Delta y^2).
    \]
    \end{proof}
Our next step is the study of the stability of the DIRK scheme:
    \begin{equation}\label{DIRKDirichlet}
    \begin{cases}
        W_1 = V^n + (1-\theta)\Delta t(AV^n+BV^n),\\
        (I - \theta\Delta tA)\hat{V}^{n+1} = W_1 + \theta\Delta tB\hat{V}^{n+1} + \Delta t((1-\theta)g_{n} + \theta g_{n+1}),\\
        W_2 = V^n + \tfrac{1}{2}\Delta t(AV^n+BV^n)+ (\tfrac{1}{2}-\theta)\Delta t(A\hat{V}^{n+1}+B\hat{V}^{n+1}),\\
        (I - \theta\Delta tA)V^{n+1} = W_2  + \theta\Delta tBV^{n+1} + \tfrac{1}{2}\Delta t(g_{n} + g_{n+1}).
    \end{cases}        
    \end{equation}
The stability function of the DIRK scheme is
\[R_{\theta}(z)=\frac{1+(1-2\theta)z+(\frac 12 -2\theta+\theta^2)z^2}{(1-\theta z)^2} \quad (z\in\mathbb{C}),\] which is $A$-stable whenever $\theta\geq \frac 14$ (see, e.g., \cite{CashDIRK}).
The following lemma is key to the proof of the stability of the scheme.
Define the error growth function of the DIRK scheme by $G_{\theta}(x) = \sup_{Re(z)\leq x}|R_{\theta}(z)|$.
\begin{lemma}\label{egf}
    Let $\theta\in [\frac 14,\frac 12]$ and $\nu \in \, ]0,\frac{1}{\theta}[$. Then, 
    \begin{equation*}
        \begin{cases}
            G_{\theta}(x) \leq 1 & \text{if }\, x\leq 0,\\
            G_{\theta}(x) = R_{\theta}(x) & \text{if }\, 0\leq x< \frac{1}{\theta}.
        \end{cases}
    \end{equation*}
    Moreover,
    \[
    G_{\theta}(x)\leq  1 + \frac{R_{\theta}(\nu)-1}{\nu}\,x   \quad \text{ for }\, 0\leq x\leq \nu <\frac{1}{\theta}.
    \]
\end{lemma}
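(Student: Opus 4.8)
The statement is a standard fact about the error growth function of a rational stability function, so my plan is to split the proof into the three claimed regimes. First, for $x\le 0$: since the DIRK scheme is $A$-stable for $\theta\ge\frac14$, we have $|R_\theta(z)|\le 1$ on the closed left half-plane, which immediately gives $G_\theta(x)=\sup_{\mathrm{Re}(z)\le x}|R_\theta(z)|\le 1$ for $x\le 0$. Second, for $0\le x<\frac1\theta$: the key point is that $|R_\theta|$ attains its supremum over the half-plane $\{\mathrm{Re}(z)\le x\}$ on the boundary line $\mathrm{Re}(z)=x$, and along that vertical line the maximum is attained at the real point $z=x$. This is where the real work lies — one must show $|R_\theta(x+\mathrm{i}s)|\le R_\theta(x)$ for all real $s$ and $0\le x<\frac1\theta$ (note $R_\theta(x)>0$ there because the denominator $(1-\theta x)^2>0$ and, for $\theta\in[\frac14,\frac12]$, the numerator stays positive on $[0,\frac1\theta)$, which should be checked by inspecting its discriminant). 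The standard device is to consider $|R_\theta(x+\mathrm{i}s)|^2$ as a rational function of $s^2$ and verify it is monotonically decreasing in $s^2$, or equivalently to invoke a maximum-principle/Phragmén–Lindelöf argument: $R_\theta$ is analytic and bounded on the half-plane $\{\mathrm{Re}(z)<\frac1\theta\}$ (the only pole is at $z=\frac1\theta$), so $|R_\theta|$ on $\{\mathrm{Re}(z)\le x\}$ is maximized on the line $\mathrm{Re}(z)=x$; then symmetry $\overline{R_\theta(\bar z)}=R_\theta(z)$ plus the explicit form reduces the line maximum to the real value.

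For the third, quantitative part, once $G_\theta(x)=R_\theta(x)$ on $[0,\frac1\theta)$ is established, it remains to bound $R_\theta(x)$ on the subinterval $[0,\nu]$ by the affine function $1+\frac{R_\theta(\nu)-1}{\nu}x$. Here I would argue convexity: $R_\theta$ is smooth on $[0,\frac1\theta)$ with $R_\theta(0)=1$, and I would show $R_\theta$ is convex on $[0,\frac1\theta)$ — or at least on $[0,\nu]$ — so that its graph lies below the secant line through the endpoints $(0,1)$ and $(\nu,R_\theta(\nu))$. Convexity of $R_\theta$ on this interval follows from differentiating twice; since $R_\theta$ is a ratio of low-degree polynomials with a double pole at $\frac1\theta$, the second derivative has a controllable sign, though one may need the restriction $\theta\in[\frac14,\frac12]$ to ensure it. The secant-line bound then gives exactly $R_\theta(x)\le 1+\frac{R_\theta(\nu)-1}{\nu}x$ for $0\le x\le\nu$.

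\textbf{Main obstacle.} The delicate step is verifying that the vertical-line maximum of $|R_\theta(x+\mathrm{i}s)|$ is attained at $s=0$, i.e. $|R_\theta(x+\mathrm{i}s)|^2$ is nonincreasing in $|s|$ for $0\le x<\frac1\theta$. Writing $z=x+\mathrm{i}s$, both numerator and denominator of $|R_\theta(z)|^2$ are quadratic polynomials in $s^2$ with positive leading coefficients, and one must show the quotient is decreasing in $s^2$; this reduces to a sign condition on a polynomial whose coefficients depend on $x$ and $\theta$, and it is precisely here that the hypotheses $\theta\in[\frac14,\frac12]$ and $x<\frac1\theta$ are needed. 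A cleaner route, which I would prefer to present, is the Phragmén–Lindelöf argument: $R_\theta$ is bounded and analytic on the open half-plane $\{\mathrm{Re}(z)<\frac1\theta\}$ (bounded because $R_\theta(z)\to\frac{\frac12-2\theta+\theta^2}{\theta^2}$ as $|z|\to\infty$ and the only singularity is the pole at $\frac1\theta$ outside the closed region), hence $\sup_{\mathrm{Re}(z)\le x}|R_\theta|=\sup_{\mathrm{Re}(z)=x}|R_\theta|$; combined with the reflection symmetry and an elementary check on the line, this pins the maximum at $z=x$ and sidesteps the messy polynomial manipulation.
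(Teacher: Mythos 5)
Your outline follows the same skeleton as the paper's proof: $A$-stability for $x\le 0$, a maximum-principle reduction of the half-plane supremum to the vertical line $\mathrm{Re}(z)=x$, the claim that the line maximum sits at the real point $z=x$, and finally convexity of $R_{\theta}$ on $[0,\frac{1}{\theta}[$ to get the secant bound $R_{\theta}(x)\le 1+\frac{R_{\theta}(\nu)-1}{\nu}x$. However, there is a genuine gap at the step you yourself flag as delicate and then propose to "sidestep": showing $|R_{\theta}(x+\mathrm{i}s)|\le R_{\theta}(x)$ for all real $s$ and $0\le x<\frac{1}{\theta}$. The Phragm\'en--Lindel\"of / maximum-modulus argument only yields $\sup_{\mathrm{Re}(z)\le x}|R_{\theta}(z)|=\sup_{\mathrm{Re}(z)=x}|R_{\theta}(z)|$, and the reflection symmetry $\overline{R_{\theta}(\bar z)}=R_{\theta}(z)$ only says $|R_{\theta}(x+\mathrm{i}s)|$ is even in $s$; neither says the maximum on the line is attained at $s=0$. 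The "elementary check on the line" is not elementary: in the paper it is the bulk of Appendix B. One writes $|R_{\theta}(x+\mathrm{i}y)|^{2}=f_{\theta}(y)/g_{\theta}(y)$, differentiates $H_{\theta}(y)=f_{\theta}(y)g_{\theta}(0)-g_{\theta}(y)f_{\theta}(0)$, and must prove $K_{\theta}(y)=2K_{\theta,1}(x)y^{2}+K_{\theta,2}(x)\le 0$; the sign of $K_{\theta,2}$ reduces to a quadratic $Q_{\theta}(x)$ whose analysis splits into five cases with boundaries at $\theta=\tfrac14,\ \tfrac{\sqrt3}{6},\ 1-\tfrac{\sqrt2}{2},\ \tfrac13,\ \tfrac12$, and it is exactly here that the hypotheses $\theta\in[\tfrac14,\tfrac12]$ and $x<\tfrac{1}{\theta}$ are consumed. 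Without carrying out this sign analysis (or an equivalent monotonicity-in-$s^{2}$ argument), the central equality $G_{\theta}(x)=R_{\theta}(x)$ is unproved, so your proposal as written does not establish the lemma.

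Two smaller points in the same vein: the convexity of $R_{\theta}$ on $[0,\frac{1}{\theta}[$ is asserted with "the second derivative has a controllable sign" but not verified -- the paper computes it explicitly and its nonnegativity again uses $\theta\le\tfrac12$ and $x<\tfrac1\theta$; and to conclude $\sup_{\mathrm{Re}(z)=x}|R_{\theta}(z)|=R_{\theta}(x)$ (rather than $|R_{\theta}(x)|$) you also need $R_{\theta}(x)\ge 0$ on $[0,\frac{1}{\theta}[$, which you mention only parenthetically. These are routine to fix, but the vertical-line maximization is the heart of the lemma and must be done in full.
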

 \begin{proof}
    The technical proof of this lemma is given in Appendix \ref{AppB}. 
    \end{proof}

Now, we can derive the following stability result for the DIRK scheme.
\begin{theorem}
    Let $\theta\in [\frac 14,\frac 12]$ and $\nu \in \, ]0,\frac{1}{\theta}[$. The DIRK scheme \eqref{DIRKDirichlet} is unconditionally stable in the $\ell_2$-norm: 
    \[
||R_{\theta}(\Delta t(A+B))||_{2}^n\leq e^{\gamma T\widehat{C}^+}\; \text{whenever}\quad n=0,1,2,\ldots, \;\text{with}\;  n\Delta t\leq T,\;\;  \widehat{C}\Delta t\leq \nu,
\]
where $\gamma$ is a constant independent of $\Delta t$, $\Delta x$ and $\Delta y$.
\end{theorem}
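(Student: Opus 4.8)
The plan is to mirror the structure of the proof of Theorem~\ref{Theo5.5} for the Crank--Nicolson scheme, but now using the error growth function $G_\theta$ of the DIRK scheme supplied by Lemma~\ref{egf}. The starting point is the von Neumann theorem: since $\mu_2[\Delta t(A+B)] \leq \Delta t\widehat{C}$ by Lemmas~\ref{bnd} and~\ref{logIneq}, and $R_\theta$ is $A$-stable for $\theta \geq \tfrac14$, one has the bound $||R_\theta(\Delta t(A+B))||_2 \leq G_\theta(\Delta t\widehat{C})$ (see \citealp[Section IV.11]{HW}). The hypothesis $\widehat{C}\Delta t \leq \nu < \tfrac1\theta$ places us precisely in the regime where Lemma~\ref{egf} applies.

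First I would split into the trivial case $\widehat{C} \leq 0$ and the case $\widehat{C} > 0$. If $\widehat{C}\leq 0$, then $\Delta t\widehat{C}\leq 0$, so $G_\theta(\Delta t\widehat{C})\leq 1$ and $||R_\theta(\Delta t(A+B))||_2^n \leq 1 = e^{0} = e^{\gamma T\widehat{C}^+}$, which settles this case for any $\gamma$. If $\widehat{C} > 0$, then $\widehat{C}^+ = \widehat{C}$ and $0 \leq \Delta t\widehat{C} \leq \nu$, so the linear bound in Lemma~\ref{egf} gives
\[
G_\theta(\Delta t\widehat{C}) \leq 1 + \frac{R_\theta(\nu)-1}{\nu}\,\Delta t\widehat{C}.
\]
Writing $\gamma := \dfrac{R_\theta(\nu)-1}{\nu}$ — a fixed positive constant depending only on $\theta$ and $\nu$, hence independent of $\Delta t$, $\Delta x$, $\Delta y$ — and using $1 + \gamma\Delta t\widehat{C} \leq e^{\gamma\Delta t\widehat{C}}$, we get
\[
||R_\theta(\Delta t(A+B))||_2^n \leq \bigl(1 + \gamma\Delta t\widehat{C}\bigr)^n \leq e^{\gamma n\Delta t\widehat{C}} \leq e^{\gamma T\widehat{C}} = e^{\gamma T\widehat{C}^+},
\]
using $n\Delta t\leq T$. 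This is exactly the claimed bound, and one checks $R_\theta(\nu) > 1$ for $\nu > 0$ since $R_\theta(x)$ is real, equals $1$ at $x=0$, and is increasing on $[0,\tfrac1\theta)$ by the second statement of Lemma~\ref{egf} together with $G_\theta(x) = R_\theta(x)$ there, so $\gamma > 0$.

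The only genuinely substantive input is Lemma~\ref{egf} (whose proof is deferred to Appendix~\ref{AppB}), and once that is granted the argument is a short chain of elementary estimates. The one point requiring a little care is the reduction to the logarithmic norm: $R_\theta$ has a double pole at $1/\theta$, so the von Neumann estimate is only valid because $A$-stability of $R_\theta$ guarantees $G_\theta(x)$ is finite for $x < 1/\theta$ and the hypothesis $\widehat{C}\Delta t \leq \nu < 1/\theta$ keeps the argument strictly inside the region of analyticity; I would state this explicitly rather than gloss over it. No difficulty is expected beyond correctly invoking these two ingredients in the right order.
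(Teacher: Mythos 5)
Your proposal is correct and follows essentially the same route as the paper: bound $||R_{\theta}(\Delta t(A+B))||_2$ by $G_{\theta}(\widehat{C}^{+}\Delta t)$ via the logarithmic norm estimate from Lemmas \ref{bnd} and \ref{logIneq}, then apply the linear bound of Lemma \ref{egf} with $\gamma = \frac{R_{\theta}(\nu)-1}{\nu}$ and the elementary inequality $(1+\gamma\widehat{C}^{+}\Delta t)^n \leq e^{\gamma T\widehat{C}^{+}}$. Your explicit case split on the sign of $\widehat{C}$ and the remark on the pole of $R_{\theta}$ at $1/\theta$ are just slightly more detailed presentations of the same argument.
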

\begin{proof}
There holds \[||R_{\theta}(\Delta t(A+B))||_{2}\leq G_{\theta}(\widehat{C}\Delta t)\leq G_{\theta}(\widehat{C}^+\Delta t).
\]
Let $\gamma = \frac{R_{\theta}(\nu)-1}{\nu}$. Applying Lemma \ref{egf} yields $||R_{\theta}(\Delta t(A+B))||_{2}^n\leq (1+\gamma\widehat{C}^+\Delta t)^n\leq e^{\gamma T\widehat{C}^+}$.
\end{proof}

For the DIRK scheme \eqref{DIRKDirichlet} with $\theta\in [\frac 14,\frac 12]$ the same second-order convergence result holds as Theorem \ref{Theo5.7} for the Crank--Nicolson scheme. A direct adaptation of the proof of Theorem \ref{Theo5.7} to the case of the DIRK scheme yields only first-order convergence, however (when $\theta \not= \frac 12$). Still, second-order convergence can be proved by invoking a key condition of \cite[Lemma 5.1]{Hundsdorfer92}, see also \cite[Sect. II.2.3]{HV}, which employs a special structure of the local error. For the sake of brevity, we omit the details of the proof of second-order convergence for the DIRK scheme in this paper. We mention that similar convergence proofs, using Hundsdorfer's Lemma, have been given in, e.g., \cite{Hundsdorfer02} and \cite{INTHOUTWyns} for closely related (splitting) schemes.

    \section{Numerical experiments} 
    \label{sec6} 
    In this section, we present a series of numerical experiments for European call options and swing options. 
    A main objective is to experimentally validate the proposed numerical schemes. For the European call option, we investigate the convergence behaviour of the total and temporal errors, see Section \ref{sec6.2}, which gives us an indication of the convergence behaviour for swing options. 
    The numerical valuation algorithm for swing options is described in Section \ref{sec6.3}.
    The subsequent study for swing options deals with the convergence behaviour of the total error, the Delta Greeks and the optimal exercise policy, see Sections \ref{sec6.4}, \ref{sec6.5} and \ref{sec6.6} respectively. 

     In the following, we apply cell averaging in the definition of the initial vector $V(0)=V^0$ corresponding to the option payoff, because relying fully on its pointwise evaluation can lead to a deteriorated spatial convergence behaviour. Let 
    \begin{align*}
        x_{i+1/2} = \frac{1}{2}(x_i+x_{i+1}) & \quad \text{and} \quad y_{j+1/2} = \frac{1}{2}(y_j+y_{j+1})\\
        \Delta x_{i+1/2} = x_{i+1/2} - x_{i-1/2} & \quad \text{and} \quad \Delta y_{j+1/2} = x_{j+1/2} - x_{j-1/2},
    \end{align*}
    with $x_{-1/2} = 2x_0-x_{1/2}$, $x_{m_1+1/2} = x_{m_1}$ and $y_{-1/2} = 2y_0-y_{1/2}$, $y_{m_2+1/2} = y_{m_2}$. Then, we define
    $$V_{i,j}(0) = \frac{1}{\Delta x_{i+1/2}\Delta y_{j+1/2}}\int_{x_{i-1/2}}^{x_{i+1/2}}\int_{y_{j-1/2}}^{y_{j+1/2}}\max\{x+y-K,0\}dxdy$$
    whenever the cell $[x_{i-1/2},x_{i+1/2}[\times[y_{j-1/2},y_{j+1/2}[$ has a non-empty intersection with the line segment $x+y=K$.

    To solve the linear systems arising in each time step of the temporal discretisation schemes, we adopt different strategies, depending on the approach. In the semi-Lagrangian approach, the resulting linear system involves the simple tridiagonal matrix $ I - \tfrac{1}{2} \Delta tA^D $, which allows for an efficient direct solution via LU factorisation. In contrast, in the semidiscretisation approach, we use the BiCGSTAB iterative method to solve the pertinent linear systems. To enhance its convergence, an incomplete LU threshold pivoting (ILUTP) preconditioner is applied. The initial guess for the BiCGSTAB iteration corresponding to the solution at time level $n$ is taken as $ V^{n-1} $ from the previous time level. 
    All the computations have been made using Matlab version R2024b, on an Intel 13th Gen Intel(R) Core(TM) i7-1370P 1.90 GHz with 16 GB memory.

    \subsection{Financial parameter values}\label{6.1}
    For the numerical experiments, the parameter values in the PIDEs \eqref{Eq:2} and \eqref{Eq:1} are chosen to reflect reasonable electricity price behaviour. Every parameter value corresponds to a yearly time scale.
    
In this section, the mean reversion level $\mu$ is assumed to be constant and equal to $80$. This value is close to the average electricity price over the year 2024 in the Belgian market (EUR/MWh). 
The half-life of the processes $X-\mu$ and $Y$, defined as the time required for them to revert to half of their values, can reasonably be estimated as 30/365 years and 2/365 years, respectively. Using the half-life formula $H_{\alpha} = \frac{\ln(2)}{\alpha}$ and $H_{\beta} = \frac{\ln(2)}{\beta}$ leads to $\alpha \approx 8$ and $\beta \approx 126$.  
For the volatility $\sigma$, we assume that the stationary variance of the process $X$, given by $\frac{\sigma^2}{2\alpha}(1-e^{-2\alpha})\approx \frac{\sigma^2}{2\alpha}$, is approximately $10\%$ of the average price $\mu$. Hence, $\sigma = \sqrt{\frac{\mu\alpha}{5}}\approx 11$.
Next, assuming an average of one jump per week, we set $\lambda = 52$.  

To test the numerical robustness of the schemes and to check that they perform well also for more extreme market values, we consider additional sets of parameters where there is a higher volatility $\sigma$ and more frequent occurrences of jumps (larger $\lambda$) as well as lower volatility combined with fewer occurrences of jumps (smaller $\lambda$). 
We deal with two finite activity jump models characterised by the following jump density functions:

\begin{itemize}
    \item \textbf{Merton-type jump}, with a normally distributed jump size:
    $$
    f(y) = \frac{1}{\sigma_J \sqrt{2\pi}} \exp\left(-\frac{(y - \mu_J)^2}{2\sigma_J^2}\right) \quad (y\in \mathbb{R}),
    $$
    where $\mu_J$ and $\sigma_J$ denote the mean and standard deviation of the jump sizes, respectively.
    
    \item \textbf{Kou-type jump}, with a double-exponential distribution:
    $$
    f(y) = p \eta_1 e^{-\eta_1 y} \mathbf{1}_{\{y \geq 0\}} + (1 - p)\eta_2 e^{\eta_2 y} \mathbf{1}_{\{y < 0\}}\quad (y\in \mathbb{R}),
    $$
    where $p \in [0,1]$ is the probability of an upward jump, and $\eta_1, \eta_2 > 0$ control the decay rates of the jump sizes in the positive and negative directions, respectively.
\end{itemize}

The jump parameter values are selected to allow for significant spikes in the asset price. Accordingly, the truncation of the domain in the $y$-direction is taken to be sufficiently large to accurately capture the influence of such large jumps.

    \begin{table}[!h]
        \caption{Parameter sets for the Merton-type jump case. The time is measured in years.}
        \centering
        \begin{tabular}{cccccccccccccc}
            parameters & $\mu$ & $\alpha$ & $\beta$ & $\sigma$& $r$ & $\lambda$ & $\mu_J$  & $\sigma_J$ & $K$ & $x_{\min}$& $x_{\max}$& $y_{\min}$&$y_{\max}$\\
           Set 1  & 80 & 8 & 126 & 11 &0.03& 52 & 20 & 60 & 50 & -2$K$&5$K$&-15$K$&15$K$\\
           Set 2  & 80 & 8 & 126 & 20 &0.03&  100&  20& 60&50& -2$K$&5$K$&-15$K$&15$K$\\
           Set 3  & 80 & 8 & 126 & 2 &0.03&  10&  20& 60&50& -2$K$&5$K$&-15$K$&15$K$
        \end{tabular}
        \label{paramSets}
    \end{table}
    \begin{table}[!h]
        \caption{Parameter sets for Kou-type jump case. The time is measured in years.}
        \centering
        \begin{tabular}{ccccccccccccccc}
            parameters &$\mu$& $\alpha$ & $\beta$ & $\sigma$& $r$ & $\lambda$ & $p$& $\eta_1$& $\eta_2$  & $K$ & $x_{\min}$& $x_{\max}$& $y_{\min}$&$y_{\max}$\\
           Set 4  &80& 8 & 126 & 11 &0.03& 52 & 0.6 & 0.01 & 0.02& 50 & -2$K$&5$K$&-20$K$&20$K$\\
           Set 5  &80& 8 & 126 & 20 &0.03& 100 & 0.6 & 0.01 & 0.02& 50 & -2$K$&5$K$&-20$K$&20$K$\\
           Set 6  &80& 8 & 126 & 2 &0.03& 10 & 0.6 & 0.01 & 0.02& 50 & -2$K$&5$K$&-20$K$&20$K$\\
        \end{tabular}
        \label{paramSetsKou-type}
\end{table}

\subsection{Convergence behaviour: European call option}\label{sec6.2}
    In this section, we numerically examine the convergence behaviour of the three schemes formulated in Section \ref{TempScheme} in the case of a European call option for the six different parameter sets given by Tables \ref{paramSets} and \ref{paramSetsKou-type}. We take the number of the spatial grid points $m_1=m_2=m$ and consider two types of discretisation errors: 
    \begin{itemize}
        \item The \emph{total discretisation error} on the region of interest defined by \[E_T(N,m) = \max\{|V^{N}_{i,j} - v(x_i,y_j,T)|\mid (x_i,y_j)\in [-\half K,\frac{3}{2}K]\times[-K,K]\}.\] We will study this error for a sequence of values $N$ and $m$ that are directly proportional to each other. More precisely, we take $N= \left \lceil{\frac{m}{2}}\right \rceil$ and consider the total error for $20$ different values of $m$ between $50$ and $500$. The reference solution for $v(\cdot,\cdot,T)$ is computed by applying the CNFI scheme \eqref{CNfpi} with $N=m=1500$.
        \item The \emph{temporal discretisation error} defined by \[E(N,m) = \max\{|V^{N}_{i,j} - V_{i,j}(T)| \mid 0\leq i,j\leq m \}.\] For this discretisation error, we consider only the semidiscretisation approach, excluding the semi-Lagrangian approach, as for the latter the temporal error is not clearly defined. A reference solution for $V(T)$ is computed by applying the CNFI scheme with $N=6000$ time steps. The temporal error is considered for $20$ different values of $N$ between $100$ and $1000$. For the number of spatial grid points, $m = 200$ is taken.
    \end{itemize}
The maturity time for the European call option is set to $T = \frac{1}{10}$ in agreement with the small interval between two consecutive action times in the case of swing options.
 
The results for the total and temporal errors are displayed in Figures \ref{TotalError1} and \ref{TemporalError1} for the Merton-type jump model and in Figures \ref{TotalError2} and \ref{TemporalError2} for the Kou-type jump model. The numerical schemes considered are SLCNFI \eqref{slcnfi}, CNFI \eqref{CNfpi}, and DIRKFI \eqref{DIRK} with $\theta = 1 - \frac{\sqrt{2}}{2}$. In the semidiscretisation approach, we choose the QUICK scheme \eqref{quickx}-\eqref{quicky} for the convection terms.

Figures \ref{TotalError1} and \ref{TotalError2} show that, for all schemes and all parameters sets, the {\it total error} decreases monotonically as $N$ and $m$ increase in a directly proportional way. 
For the parameter sets 1, 2, 4, 5, a favourable second-order convergence behaviour is observed. 
Further, for each of these four sets, CNFI and DIRKFI are seen to have about the same error constants, which are always smaller than that for SLCNFI, and hence, CNFI and DIRKFI are to be preferred over SLCNFI.

The sets 3 and 6 represent highly convection-dominated problems as the volatility $\sigma$ is small and the jump intensity $\lambda$ is low. These characteristics lead to the emergence of a region of nonsmoothness where the solution $v$ has steep gradients. In this situation, the convergence order for the total error of CNFI and DIRKFI reduces: it is (asymptotically) equal to 1.6. 
On the other hand, for SLCNFI the convergence order remains (asymptotically) equal to two. Further, its error constant is smaller than that for CNFI and DIRKFI.
Hence, for these two sets, SLCNFI is preferred.
We remark, however, that sets 3 and 6 are less representative of realistic market situations, since electricity prices typically experience significant fluctuations and frequent jumps, which are not captured well by a small volatility and low jump intensity.

Figures \ref{TemporalError1} and \ref{TemporalError2} display the {\it temporal errors} for the CNFI and DIRKFI schemes for, respectively, the Merton- and Kou-type jump models. The favourable result is observed that, for all six parameter sets, second-order convergence holds.
Additional experiments have been carried out with larger numbers of spatial grid points ($m = 300, 400$) and the obtained temporal errors are found to be essentially unaffected.
This is a desirable property of the temporal error and is often referred to in the literature as convergence in the stiff sense.

Concerning the temporal error constant, this is seen to be noticeably smaller for DIRKFI than for CNFI. We note here that DIRKFI involves two fixed-point iteration processes per time step, thus requiring approximately twice the computational effort of CNFI per time step. 

Additional experiments reveal that CNFI may show, however, unstable behaviour for larger time steps, even when Rannacher time-stepping (backward Euler damping) is applied. 
Such unstable behaviour has not been observed in our experiments with SLCNFI and DIRKFI, which forms a favourable property of the latter schemes.
\vfill\clearpage

\begin{figure}[!h]
    \centering
    \includegraphics[trim=70 0 0 0, clip,width=0.7\linewidth]{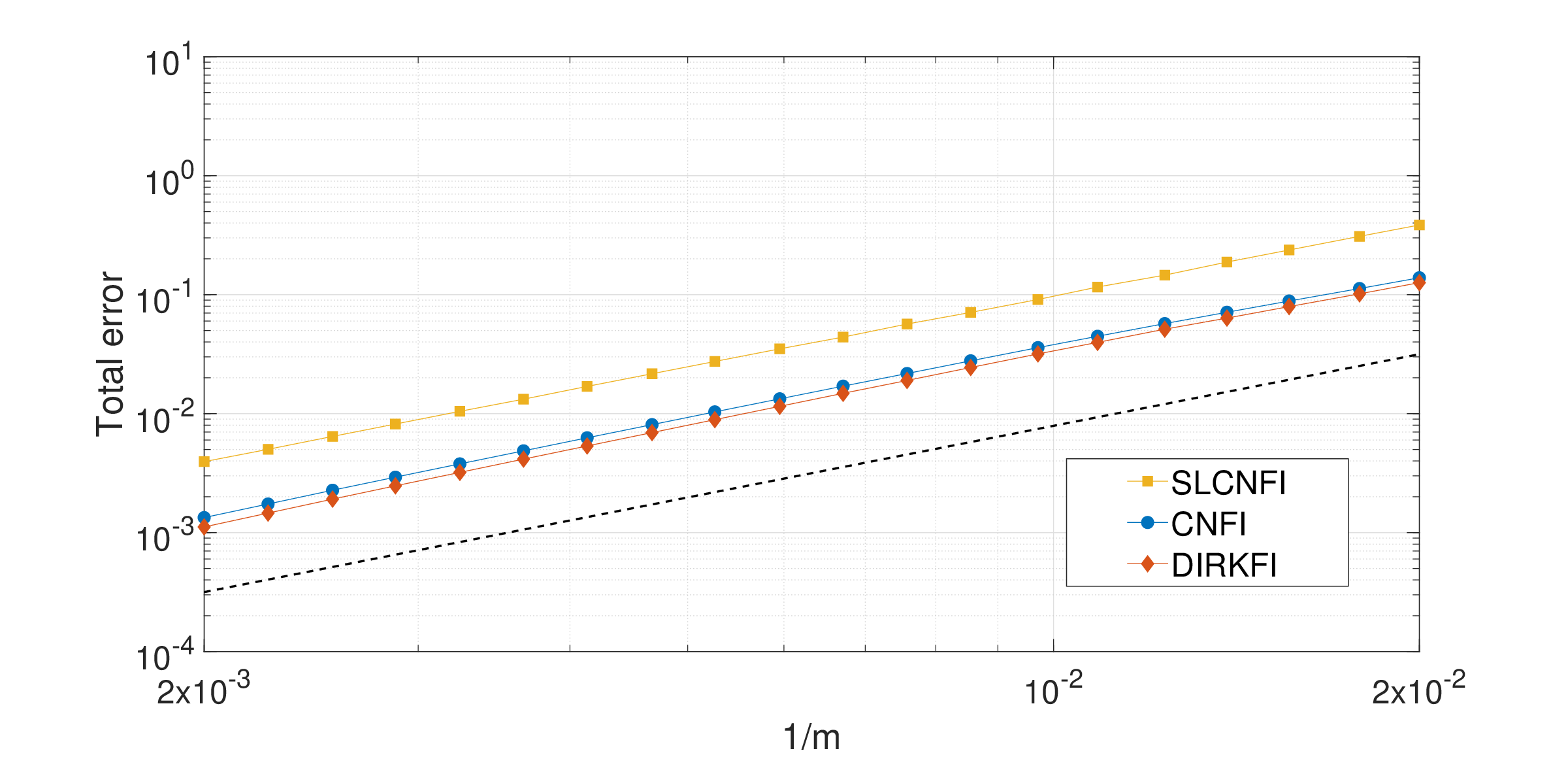}
    \includegraphics[trim=70 0 0 0, clip,width=0.7\linewidth]{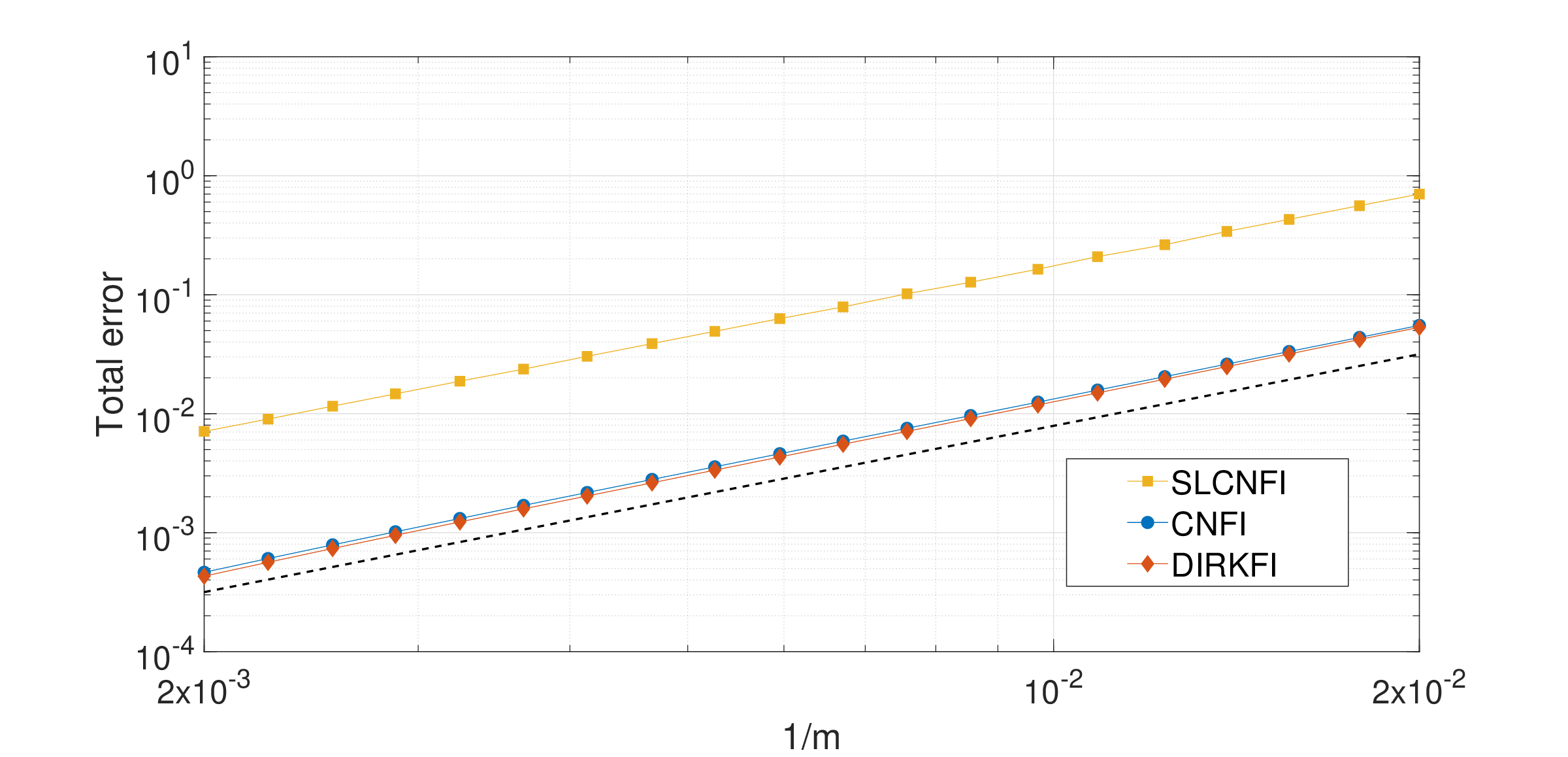}
    \includegraphics[trim=70 0 0 0, clip,width=0.7\linewidth]{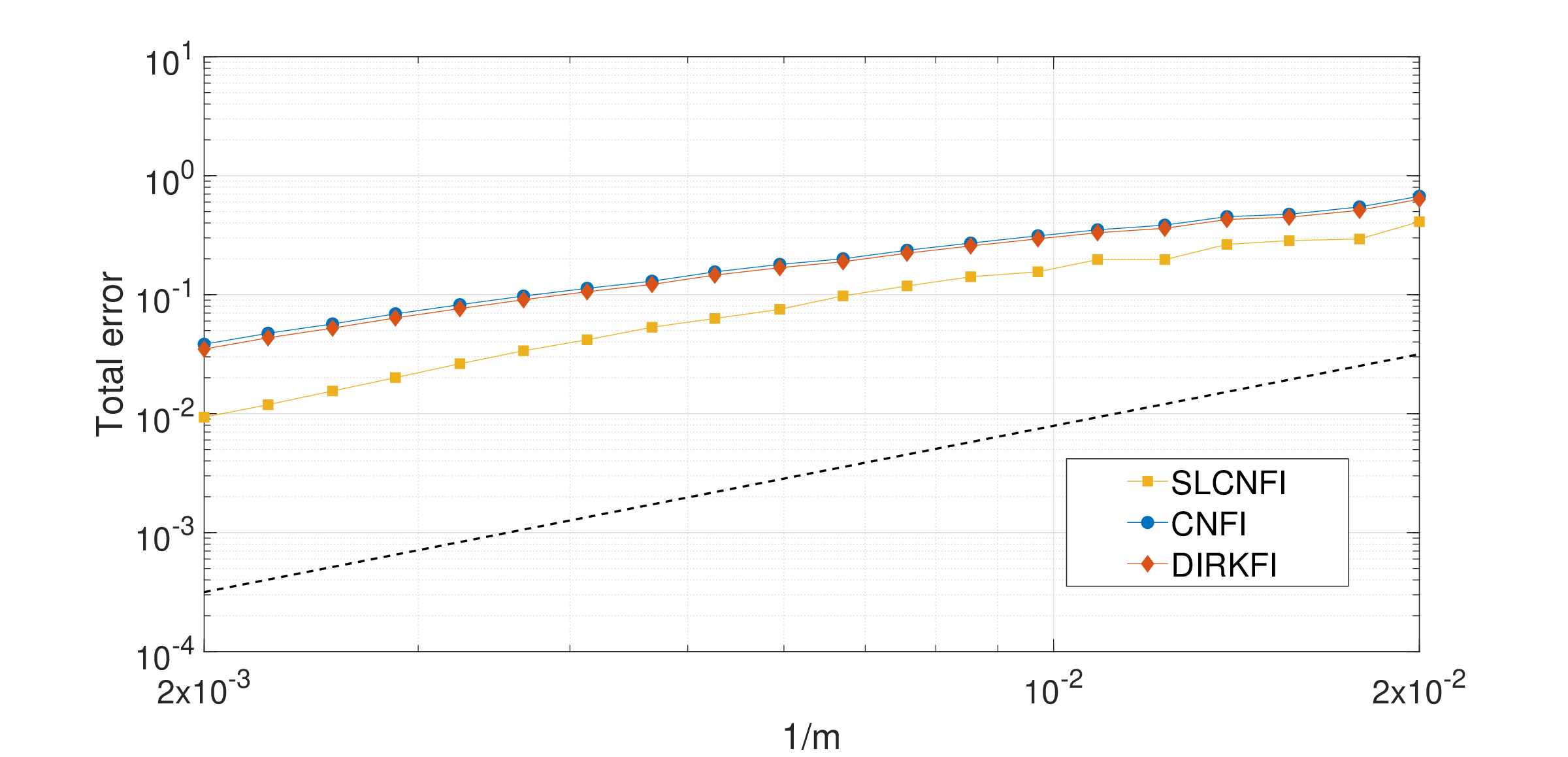}
    \caption{European call option in the Merton-type jump model. Total discretisation errors of the SLCNFI, CNFI and DIRKFI schemes for $N= \left \lceil{\frac{m}{2}}\right \rceil$ and set 1 (top), set 2 (middle), set 3 (bottom). Added: dashed reference line for convergence order 2.}
    \label{TotalError1}
\end{figure}
\vfill\clearpage
\begin{figure}[!h]
    \centering
    \includegraphics[trim=70 0 0 0, clip,width=0.7\linewidth]{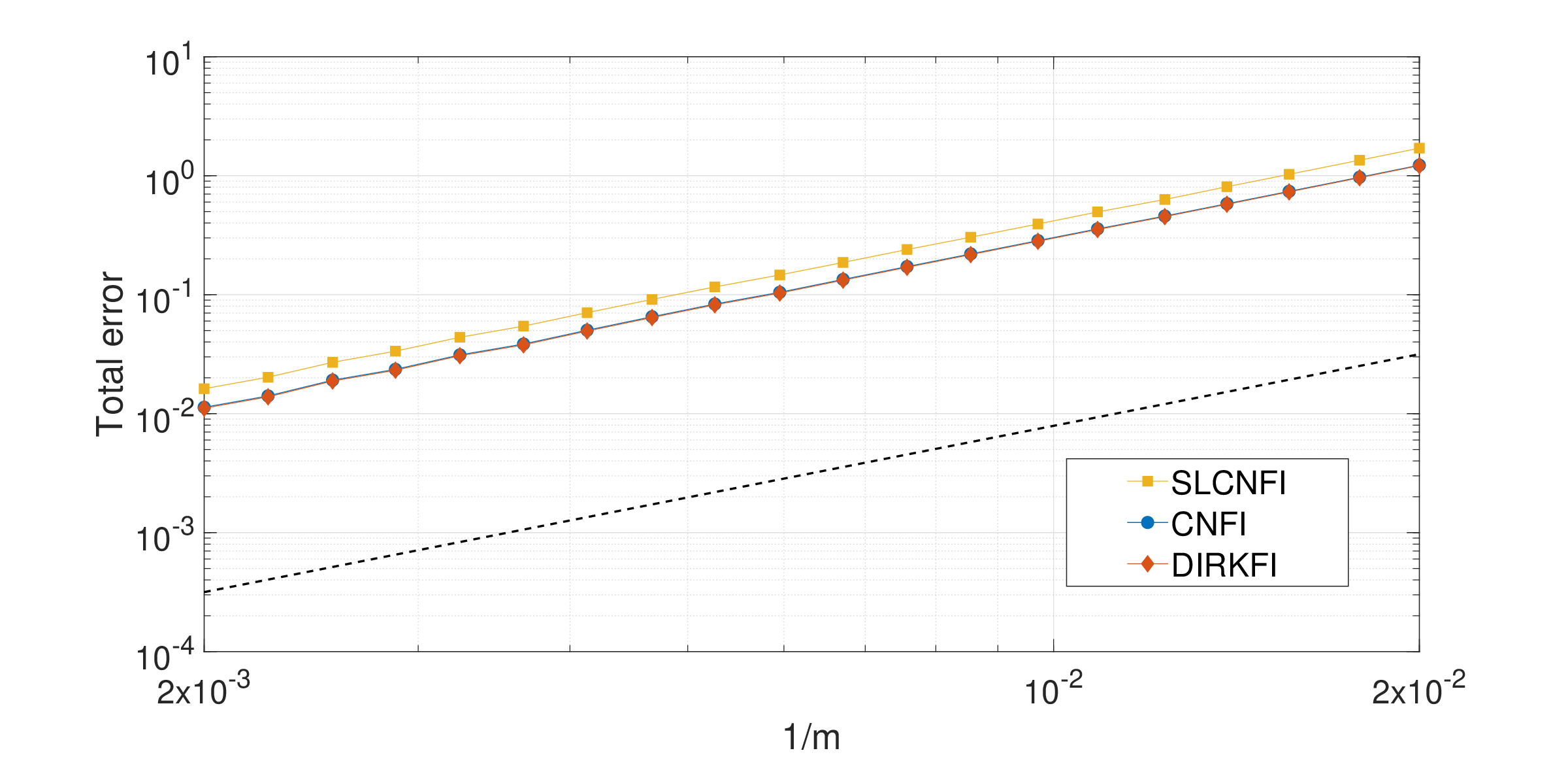}
    \includegraphics[trim=70 0 0 0, clip,width=0.7\linewidth]{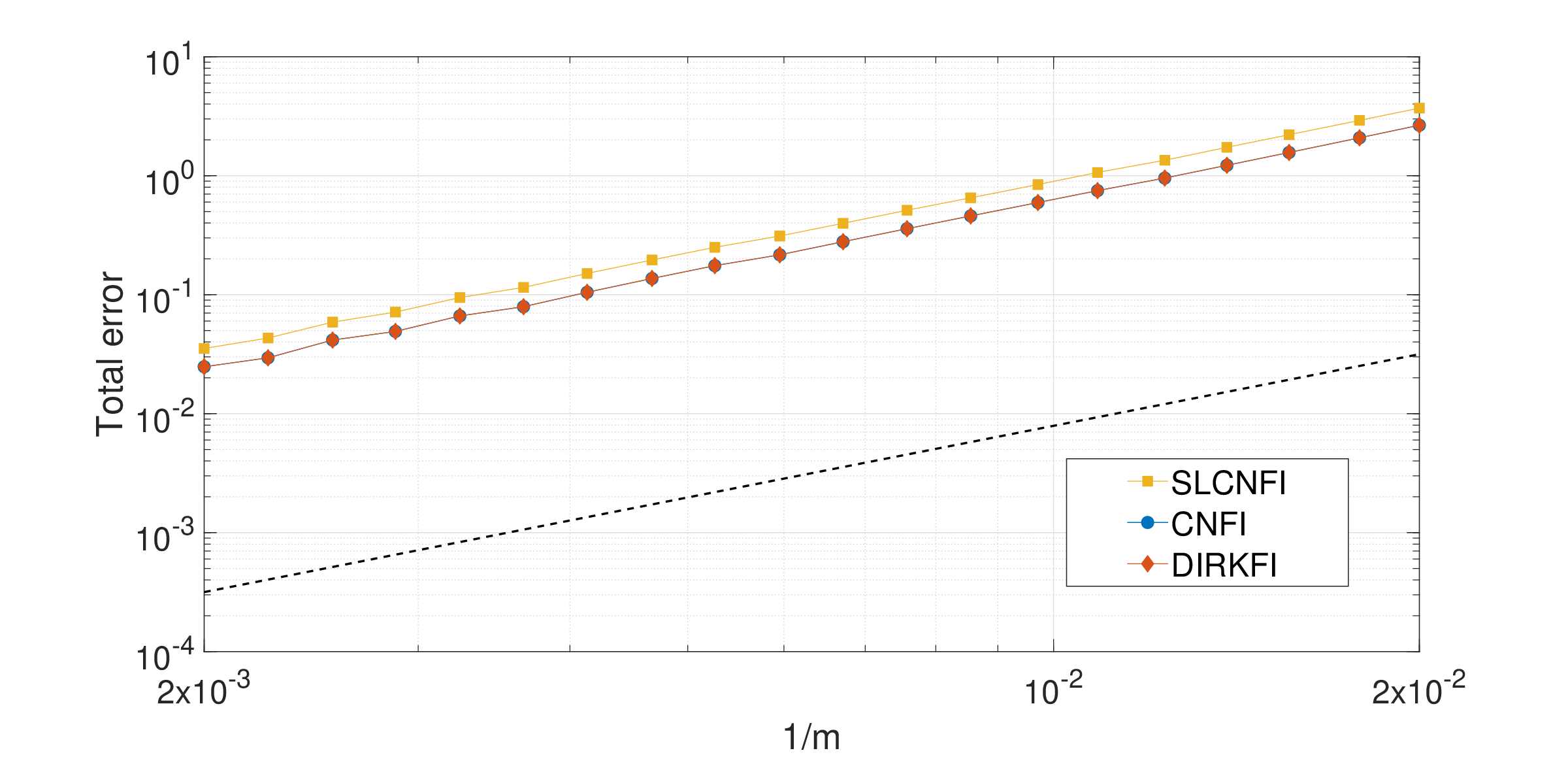}
    \includegraphics[trim=70 0 0 0, clip,width=0.7\linewidth]{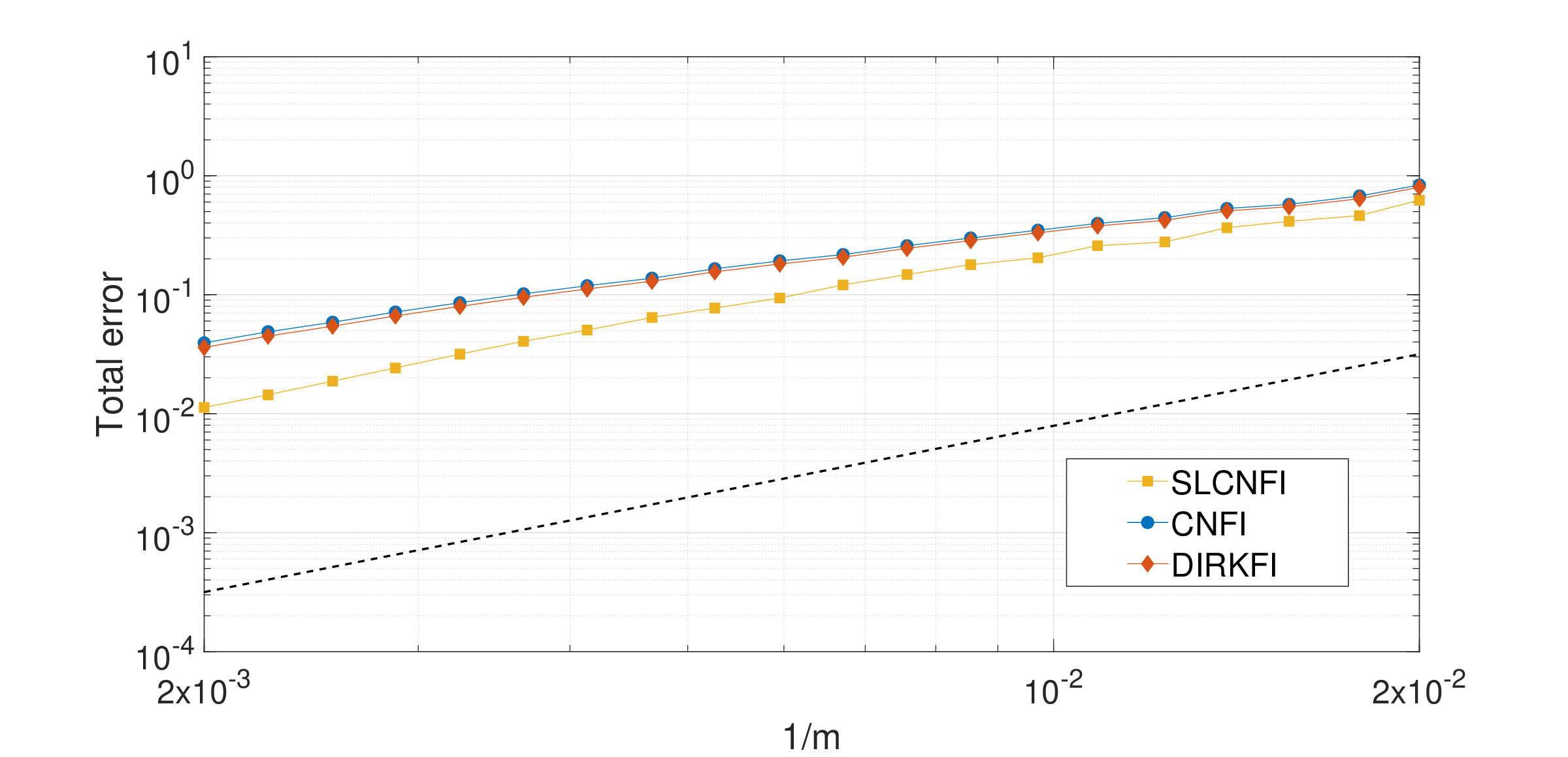}
    \caption{European call option in the Kou-type jump model. Total discretisation errors of the SLCNFI, CNFI and DIRKFI schemes for $N= \left \lceil{\frac{m}{2}}\right \rceil$ and set 4 (top), set 5 (middle), set 6 (bottom). Added: dashed reference line for convergence order 2.}
    \label{TotalError2}
\end{figure}
\vfill\clearpage
\begin{figure}[!h]
    \centering
    \includegraphics[trim=70 0 0 0, clip,width=0.7\linewidth]{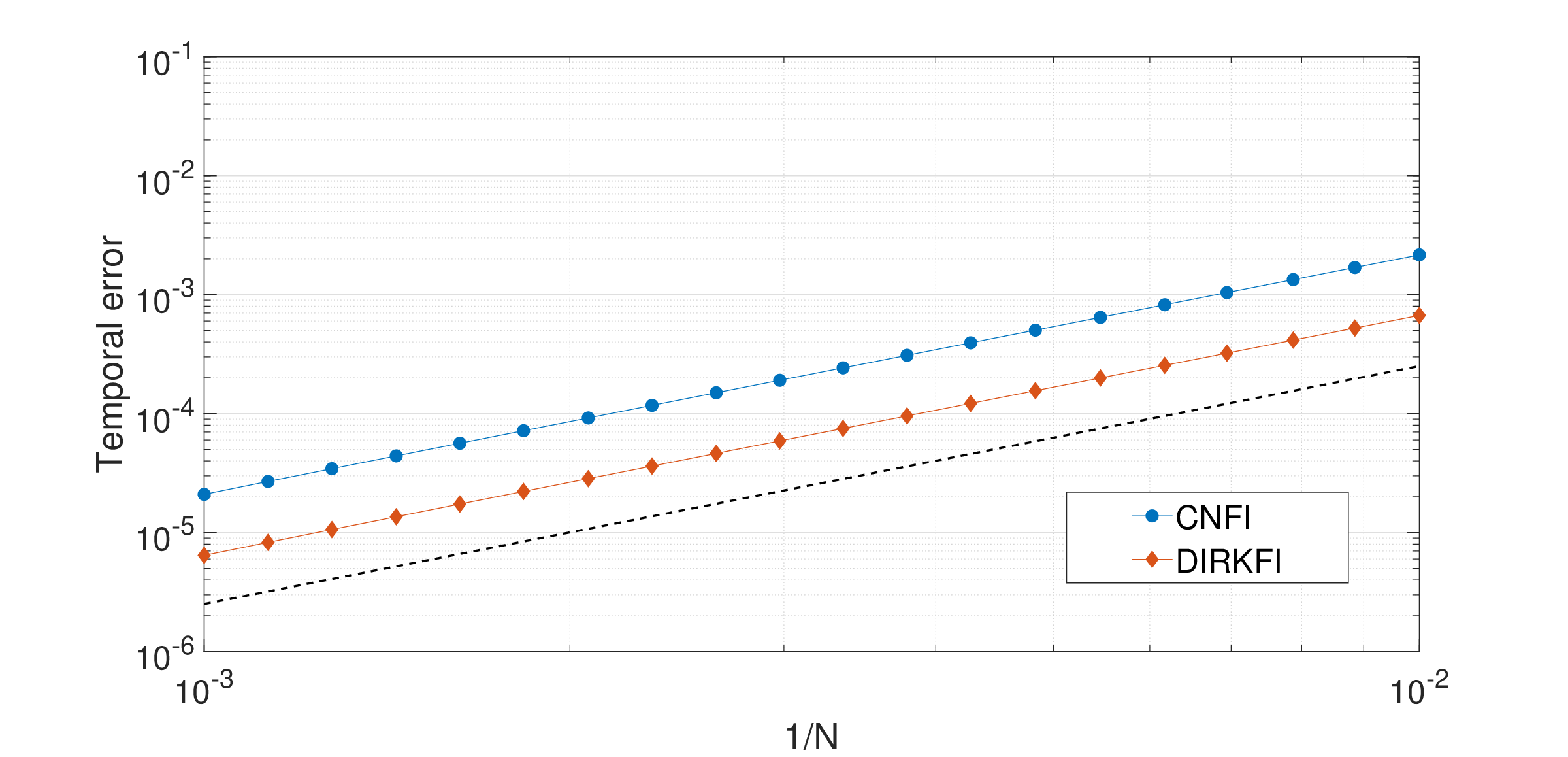}
    \includegraphics[trim=70 0 0 0, clip,width=0.7\linewidth]{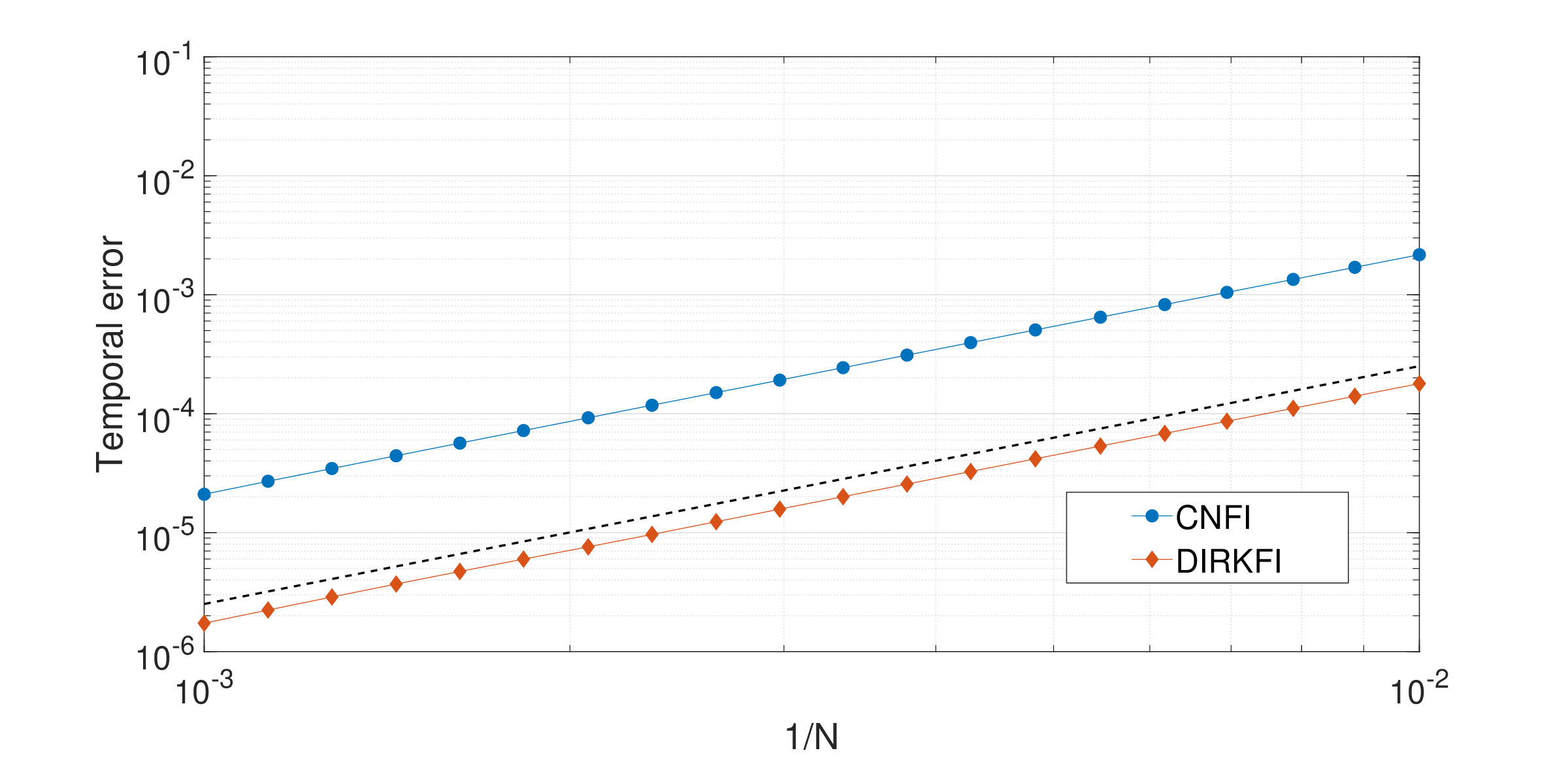}
    \includegraphics[trim=70 0 0 0, clip,width=0.7\linewidth]{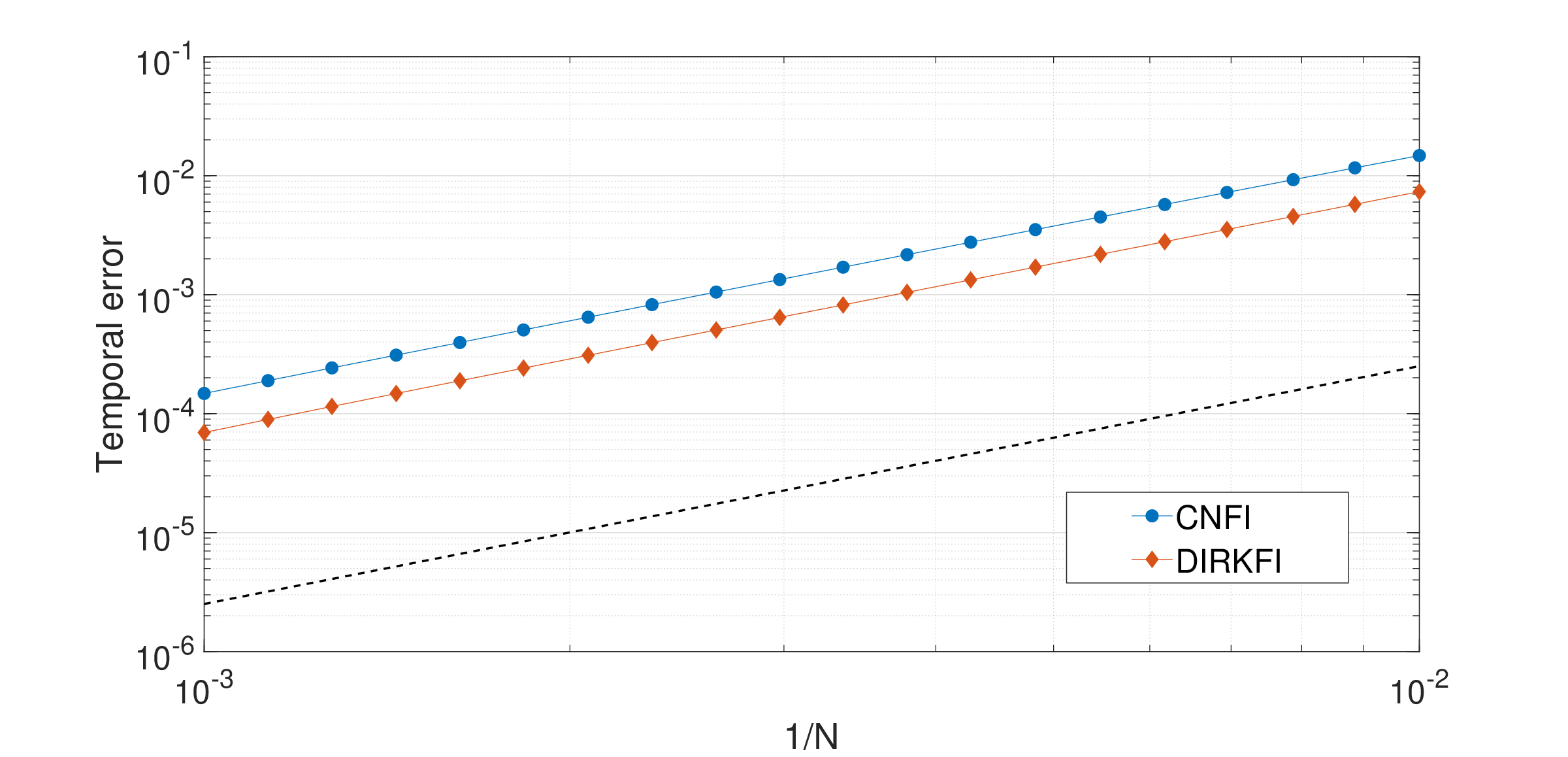}
    \caption{European call option in the Merton-type jump model. Temporal discretisation errors of the CNFI and DIRKFI schemes for $m=200$ and set 1 (top), set 2 (middle), set 3 (bottom). Added: dashed reference line for convergence order 2.}
    \label{TemporalError1}
\end{figure}
\vfill\clearpage
\begin{figure}[!h]
    \centering
    \includegraphics[trim=70 0 0 0, clip,width=0.7\linewidth]{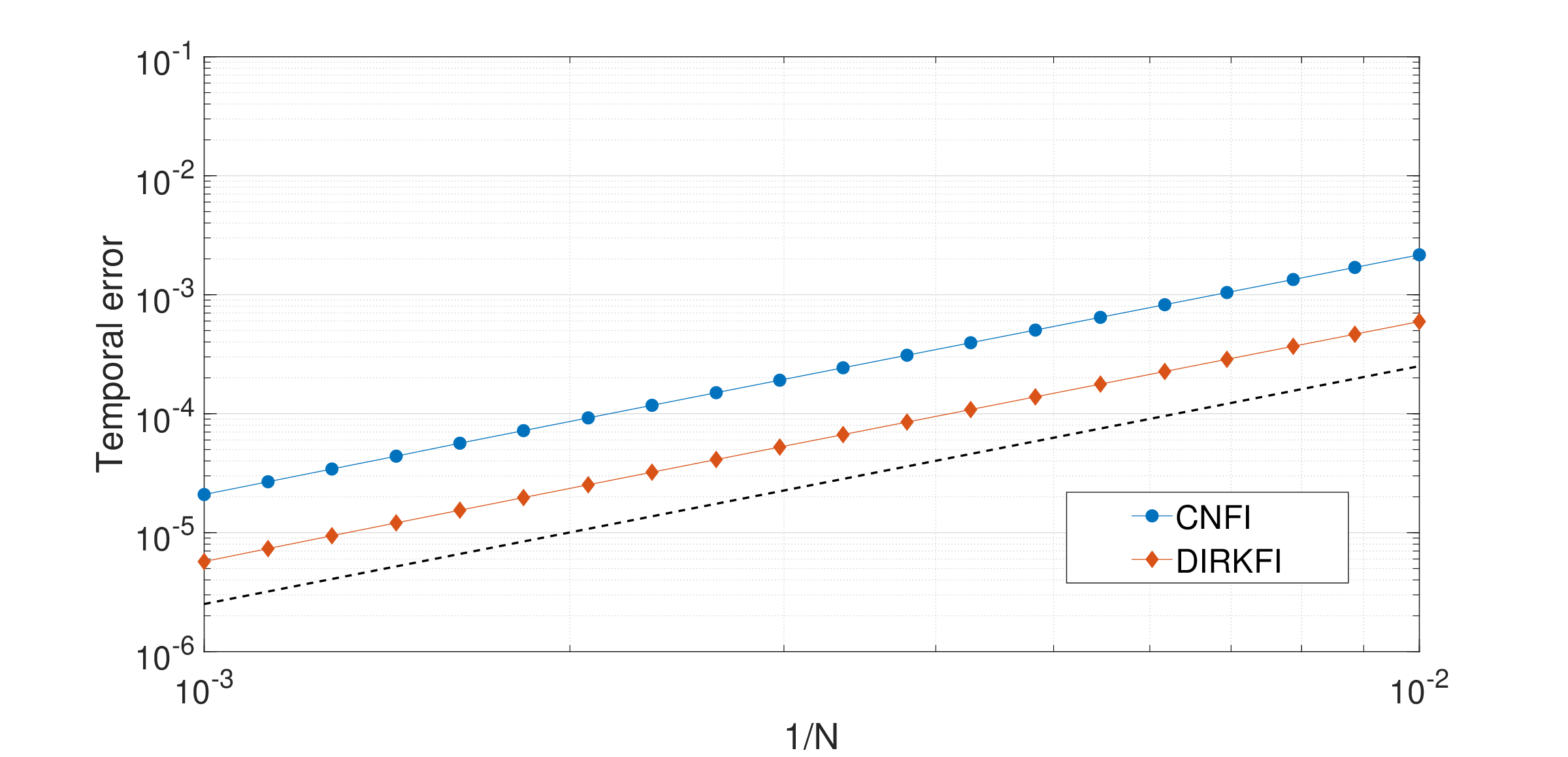}
    \includegraphics[trim=70 0 0 0, clip,width=0.7\linewidth]{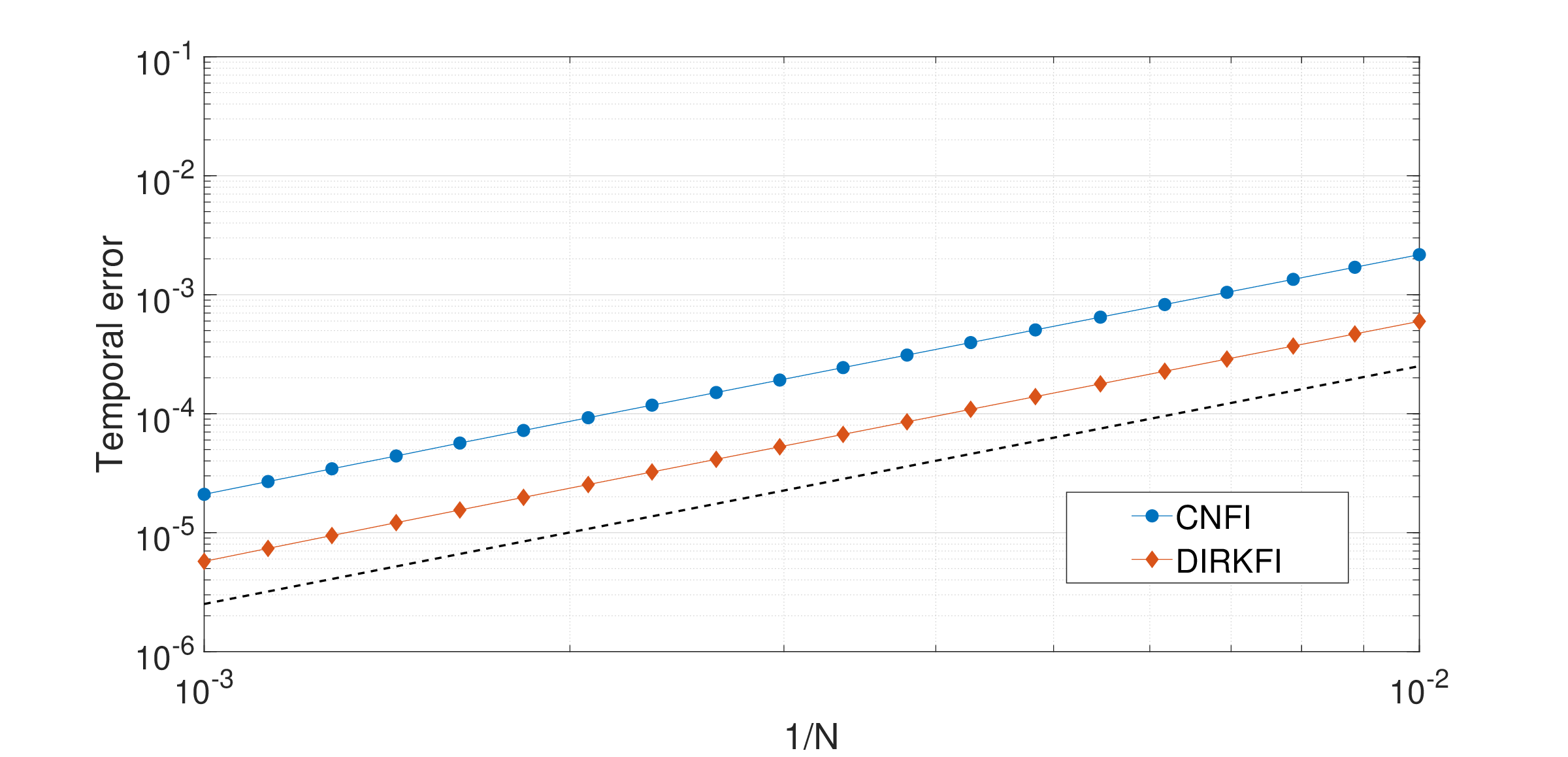}
    \includegraphics[trim=70 0 0 0, clip,width=0.7\linewidth]{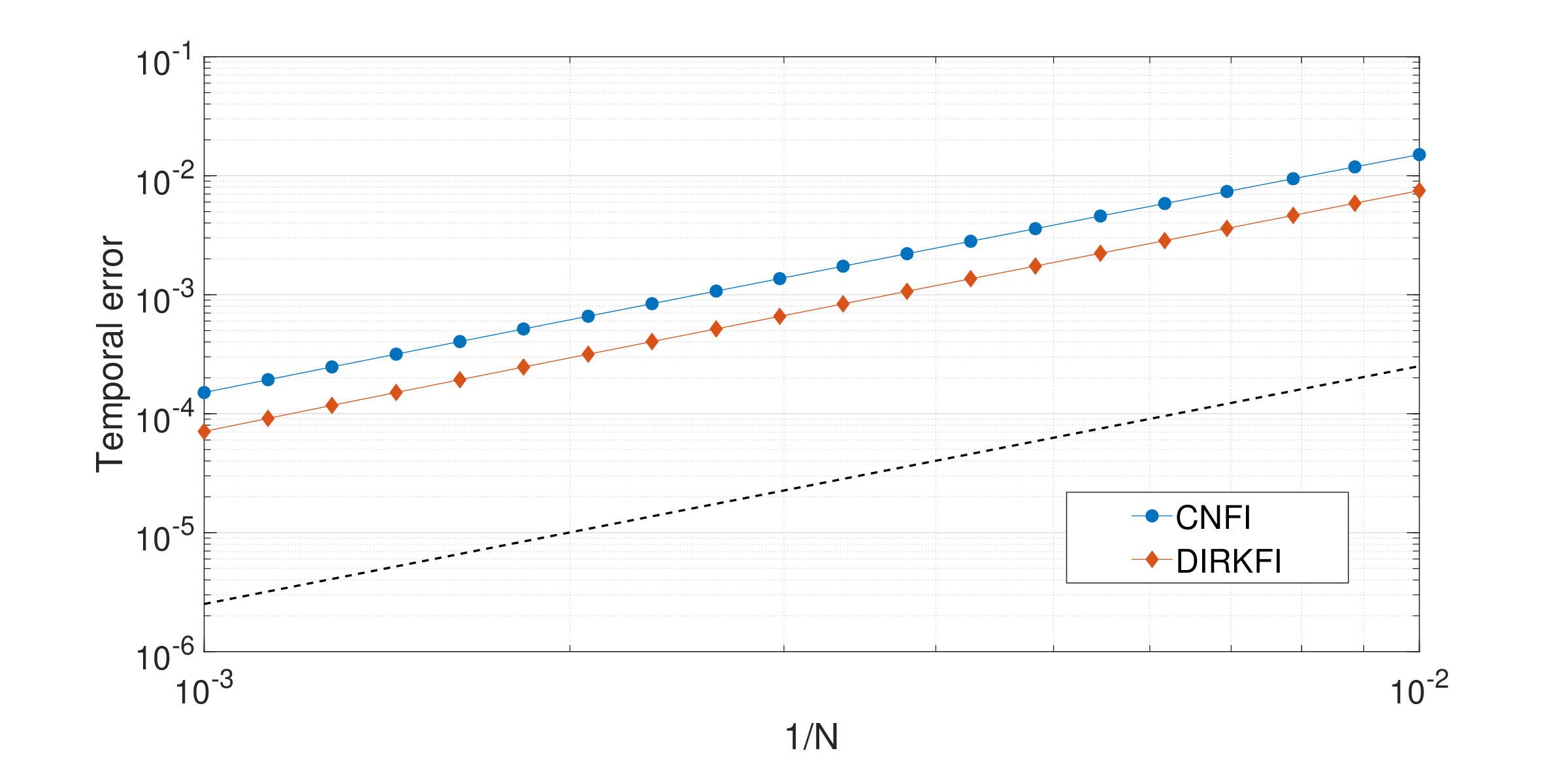}
    \caption{European call option in the Kou-type jump model. Temporal discretisation errors of the CNFI and DIRKFI schemes for $m=200$ and set 4 (top), set 5 (middle), set 6 (bottom). Added: dashed reference line for convergence order 2.}
    \label{TemporalError2}
\end{figure}
\vfill\clearpage

\subsection{Numerical valuation of swing options}\label{sec6.3}
For the numerical valuation of swing options, we combine the numerical schemes proposed for the European call option case with the dynamic programming principle to find the optimal exercise at each action time. As before, we reverse the time to obtain an initial condition instead of a terminal condition. The action time $\tau_n$ in reversed time is $T - T_{N_a-n + 1}$ ($n = 1,2,\ldots, N_a$) and the sequence of PIDEs \eqref{Eq:2} then becomes
    \begin{equation}
        \begin{cases}
            \displaystyle \partial_{t}v(x,y,z,t) = \frac{\sigma^2}{2}\partial_{xx}v(x,y,z,t) + \alpha(\mu - x) \partial_{x}v(x,y,z,t) -\beta y\partial_{y}v(x,y,z,t) - (r+\lambda)v(x,y,z,t) \\[3mm]
            \displaystyle \qquad \qquad \qquad \qquad \qquad+\lambda\int_{\mathbb{R}}v(x,y +\xi,z,t)f(\xi)d\xi, \qquad \tau_{n}< t < \tau_{n+1}, \\[3mm]
            \displaystyle v(x,y,z,\tau_{n}) = \max \big\{b_n(x+y-K) + v(x,y,z+b_n,\tau_{n}^-)\mid  b_n\in\{0,1,\ldots,L\}, z + b_n\leq M \big\}                                 \\
        \end{cases}
        \label{Eq:3}
    \end{equation}
   for $(x,y,z)\in \mathbb{R}\times\mathbb{R}\times\{0,1,\ldots,M-1\}$ and $1\leq n\leq N_a$. Here $v(\cdot,\cdot,\cdot,\tau_{1}^-) = 0$, $v(\cdot,\cdot,M, \cdot) = 0$ and $\tau_{N_a+1} = T$. 
    
The valuation procedure is outlined in Algorithm \ref{DPP}.
At the first action time, $\tau_{1} = 0$, the holder buys the maximal allowable amount.  
Given the option value at $\tau_1$, the PIDE is solved to obtain the option value up to the second action time, $\tau_{2}$. It is then considered whether it is optimal to exercise or not by maximising the option value at $\tau_{2}$. This process is repeated for each subsequent action time up to and including $\tau_{N_a}$. At each action time, the option value is determined by optimising over all feasible exercise amounts $b\in \{0,1,\ldots,L\}$, while ensuring that the cumulative purchased amount does not exceed the global constraint $M$. Finally, the PIDE is solved over the interval $(\tau_{N_a},\tau_{N_a+1}]$ to arrive at the desired swing option value function $v(\cdot,\cdot,0,T)$.

\begin{algorithm}[!h]
\caption{Dynamic Programming for Swing Option Valuation}
\label{DPP}
\begin{algorithmic}[1]
\State \textbf{Input:} $N_a$ (number of action times), $L$ (local constraint), $M$ (global constraint), $N$ (number of time steps between two successive action times), strike $K$
\State \textbf{Initialise:} $V \gets 0$, optimal policy $b^*_{n,l,i,j} \gets 0$
\For{$n = 1$ to $N_a$} \Comment{$n$-th swing action time}
    \For{$l = 0$ to $M - 1$} \Comment{$l$: cumulative purchased amount}
        \For{$b = 0$ to $\min(L, M - l)$}
            \State $V^{(n-1)N}_{l,i,j} \gets \max\left( b(x_i + y_j - K) + V^{(n-1)N}_{l + b, i, j}, \; V^{(n-1)N}_{l,i,j} \right)$
        \EndFor
        \State Store optimal $b^*_{n,l,i,j}$ that yields the maximum
        \For{$k = 1$ to $N$} 
            \State Compute $V^{(n-1)N + k}_{l}$ by the time-stepping scheme
        \EndFor
    \EndFor
\EndFor
\end{algorithmic}
\end{algorithm}
\begin{algorithm}[!h]
\caption{Tracking back the cumulative optimal exercise path}
\label{optPath}
\begin{algorithmic}[1]
\State \textbf{Input:} Number of swing times $N_a$, global constraint $M$, policy array $b^*$
\State \textbf{Initialize:} $\texttt{path} \gets 0$ \Comment{cumulative optimal path}
\For{$n = 1$ to $N_a$}
    \For{each grid point $(i, j)$}
        \If{$n = 1$}
            \State $l \gets 0$ \Comment{zero units of energy at the start}
        \Else
            \State $l \gets \texttt{path}[i,j, n-1]$
        \EndIf
        \If{$l < M$}
            \State $\delta \gets b^*_{N_a-n+1, l, i, j}$
            \State $\texttt{path}[i,j, n] \gets l + \delta$
        \Else
            \State $\texttt{path}[i,j, n] \gets l$
        \EndIf
    \EndFor
\EndFor
\State \textbf{Output:} $\texttt{path}$
\end{algorithmic}
\end{algorithm}

\subsection{Convergence behaviour: swing options}\label{sec6.4}
 In this section, we study the total discretisation error of the three schemes from Section~\ref{TempScheme} in the numerical valuation of swing options. 
Here the number of swing action times is set equal to $N_a=20$, with local constraint given by $L=1$ and global constraint by $M=10$. Both Merton- and Kou-type jump models are considered, for the parameter sets 1 and 4 given by the Tables \ref{paramSets} and \ref{paramSetsKou-type}, respectively, with maturity time $T=1$.
To assess the convergence rate of the schemes, we successively refine in this section the spatial and temporal grids, where at each refinement level the numbers of grid points in the $x$-, $y$- and $t$-directions are simultaneously doubled.
Denote by $\widetilde{V}(N)$ the approximated swing option value at a given point $(x,y,z,T)$ if $m_1=m_2=N$, where $N$ is the number of time steps between each two consecutive action times.
The numerical convergence rate is then defined by 
\[
\text{rate}  = \log_2 \Bigg(\frac{|\widetilde{V}(N/2)-\widetilde{V}(N/4)|}{|\widetilde{V}(N)-\widetilde{V}(N/2)|}\Bigg). 
\]
Table \ref{refrenceTable1} reports for the two parameter sets 1 and 4 the approximated swing option values at three points $(x,y,0,T)$ using the CNFI scheme with $N=50,100,200,400$. For each set and point, the corresponding numerical convergence rate is added. Clearly, a favourable second-order convergence behaviour is observed. Numerical experiments in the case of the SLCNFI and DIRKFI schemes reveal the same positive result.
\begin{table}
\caption{Swing option values obtained with the CNFI scheme at several points $(x,y,z,T)$ with $z=0$ and $T=1$ for sets 1 and 4 where $N_a=20$, $L=1$, $M=10$. Numerical convergence rate added.}
\label{refrenceTable1}
\begin{center}
\textbf{Set 1}
\end{center}
\begin{center}
\begin{tabular}{c c c c c c c}
\toprule
 $m_1=m_2=N$& $x = 40,y = 5$ & rate& $x = 60, y= -100$ & rate& $x = 80, y= 100$& rate \\
\midrule
$50$ &$501.0748$ & & $512.6952$ & & $528.1959$ & \\
$100$ & $500.8956$& & $512.5162$ &  &$527.9916$& \\
$200$ &  $500.8575$&$2.23$  &$512.4794$&$2.28$  &$527.9464$ &$2.17$ \\
$400$ &  $500.8479$& $1.98 $ &$512.4701 $& $1.98 $  &$527.9356$&$2.05$ \\
\bottomrule
\end{tabular}
\end{center}
\begin{center}
\textbf{Set 4}
\end{center}
\begin{center}
\begin{tabular}{c c c c c c c}
\toprule
 $m_1=m_2=N$& $x = 40,y = 5$ & rate& $x = 60, y= -100$ & rate& $x = 80, y= 100$& rate \\
\midrule
$50$ &$681.8451$ & & $694.4583$ & & $710.0696$ & \\
$100$ & $681.4723 $& & $694.1128$ &  &$709.7014$& \\
$200$ &  $681.3949$&$2.26$  &$694.0408$&$2.26$  &$709.6283$ &$2.33$ \\
$400$ &  $681.3740$& $1.89$ &$694.0210 $& $1.85$  &$709.6078$&$1.85$ \\
\bottomrule
\end{tabular}
\end{center}
\end{table}




\subsection{The Greeks and sensitivity analysis: swing options}\label{sec6.5}
The Greeks represent the sensitivities of the option value with respect to underlying variables and parameters, and are important for hedging purposes. They serve as an essential tool for quantifying the impact of changes in model inputs on the option value. In this section, we consider the Delta Greeks $\Delta_1 = \frac{\partial v}{\partial x} $ and $\Delta_2 = \frac{\partial v}{\partial y}$, which measure the sensitivity of the swing option value with respect to changes in the price components $x$ and $y$, respectively. Approximations to the Deltas are directly obtained at no additional computational cost, as they are naturally incorporated in the finite difference discretisation.
\begin{figure}[!h]
    \centering
    \begin{tabular}{c c}
        \textbf{Set 1} & \textbf{Set 4} \\[0.5em]

        \includegraphics[trim=55 0 0 0, clip,width=0.45\linewidth]{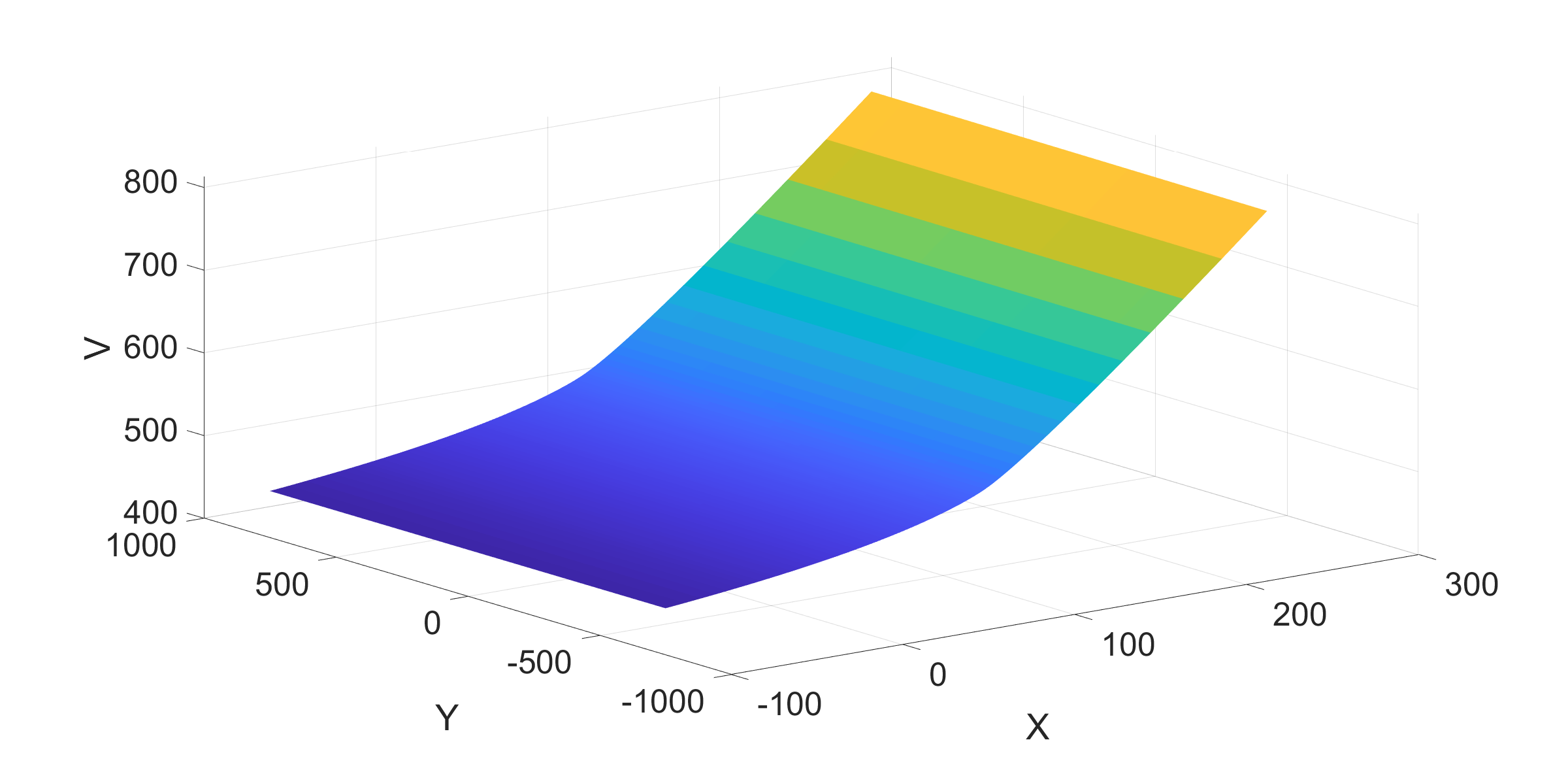} &
        \includegraphics[trim=55 0 0 0, clip,width=0.45\linewidth]{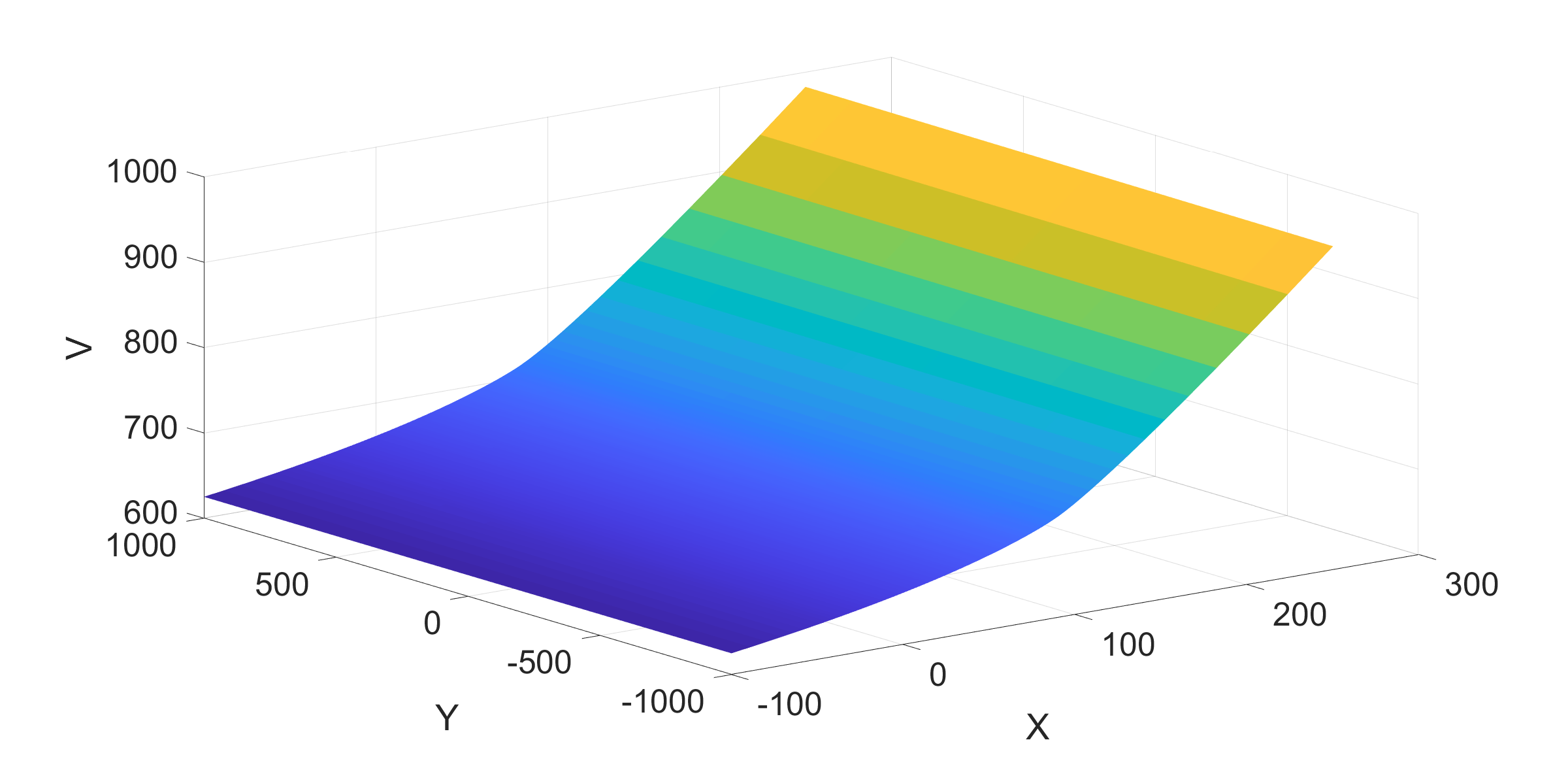}\\[1em]
        
        \includegraphics[trim=55 0 0 0, clip,width=0.45\linewidth]{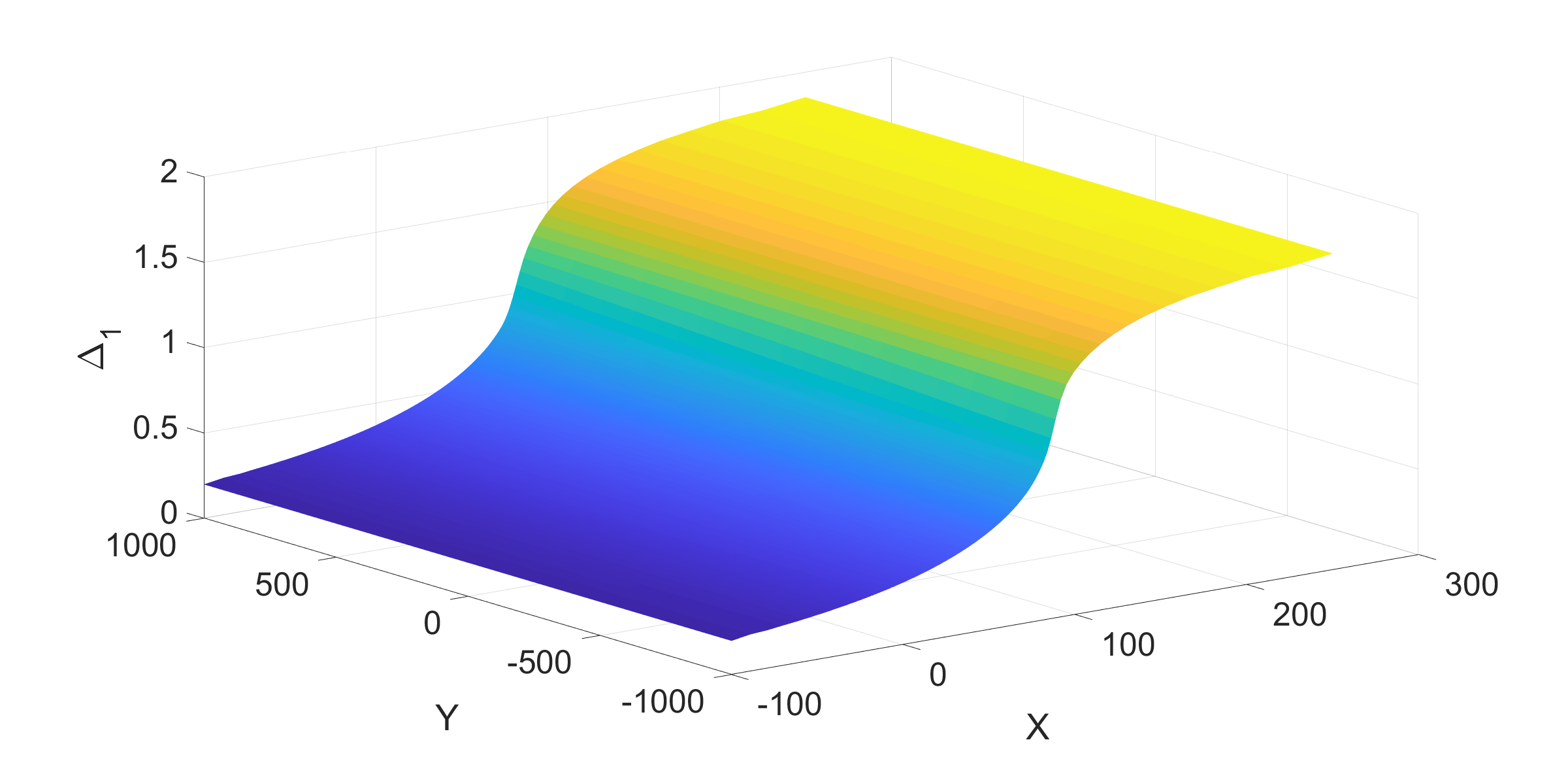} &
        \includegraphics[trim=55 0 0 0, clip,width=0.45\linewidth]{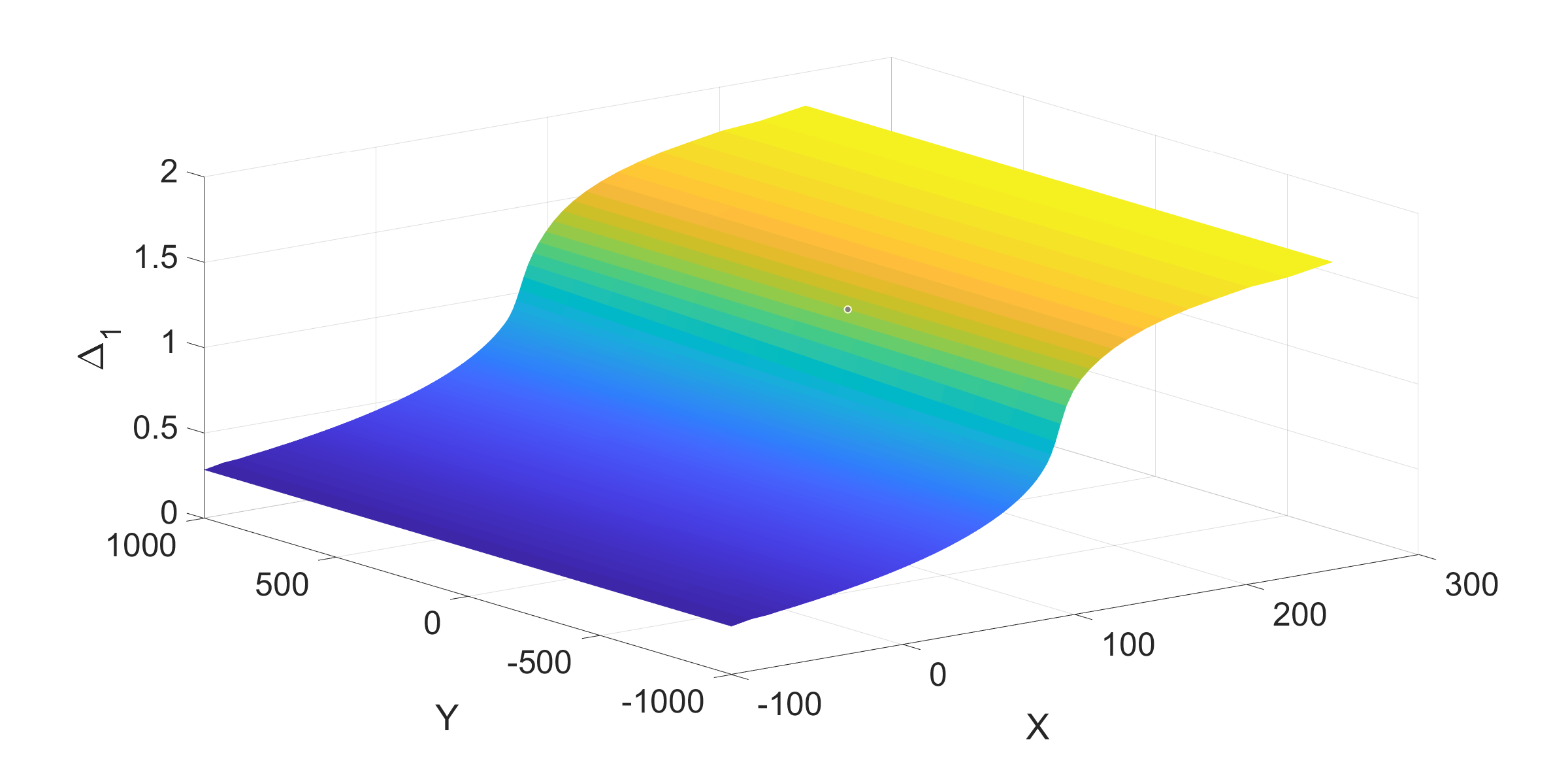} \\[1em]

        \includegraphics[trim=55 0 0 0, clip,width=0.45\linewidth]{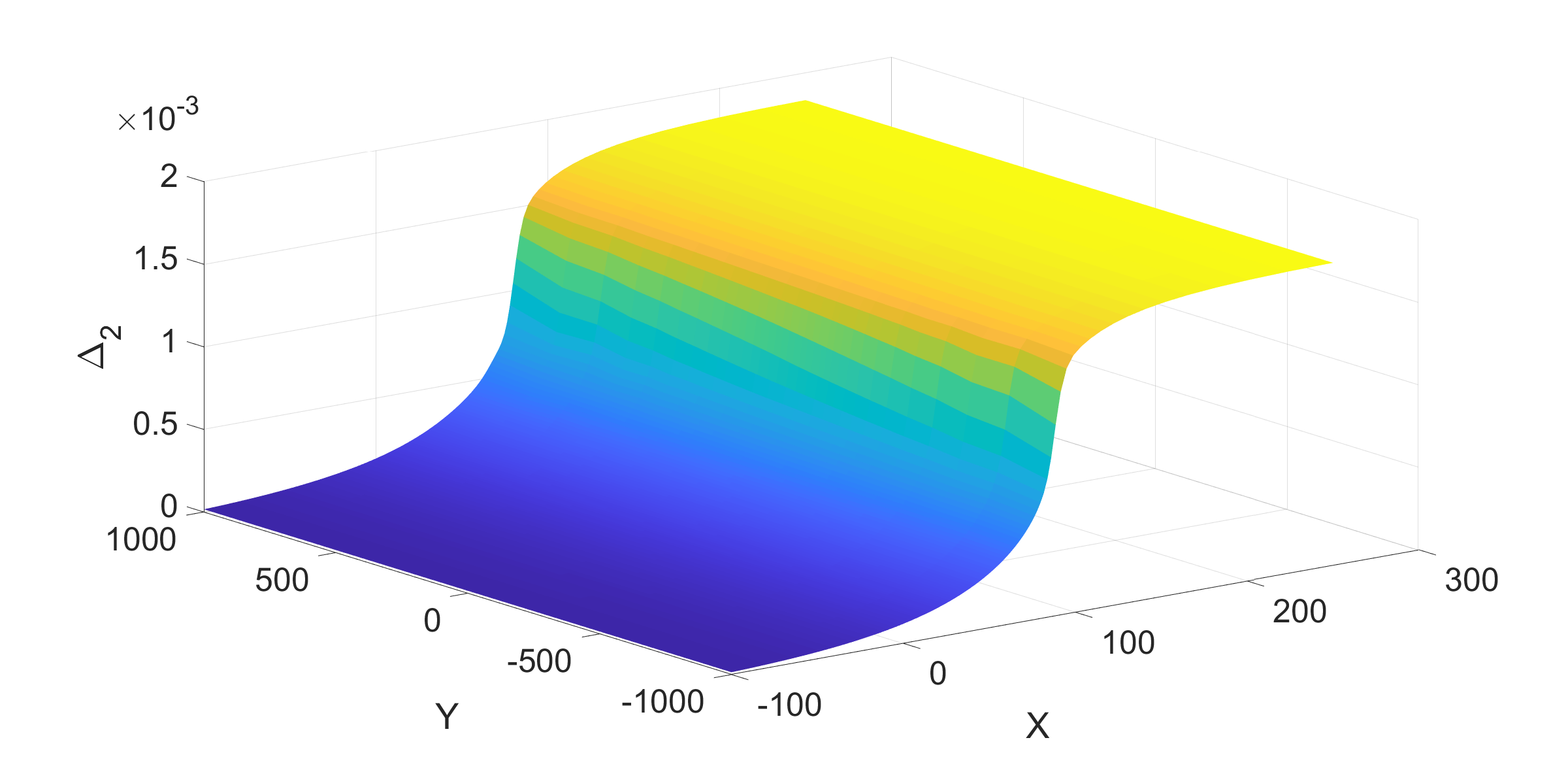} &
        \includegraphics[trim=55 0 0 0, clip,width=0.45\linewidth]{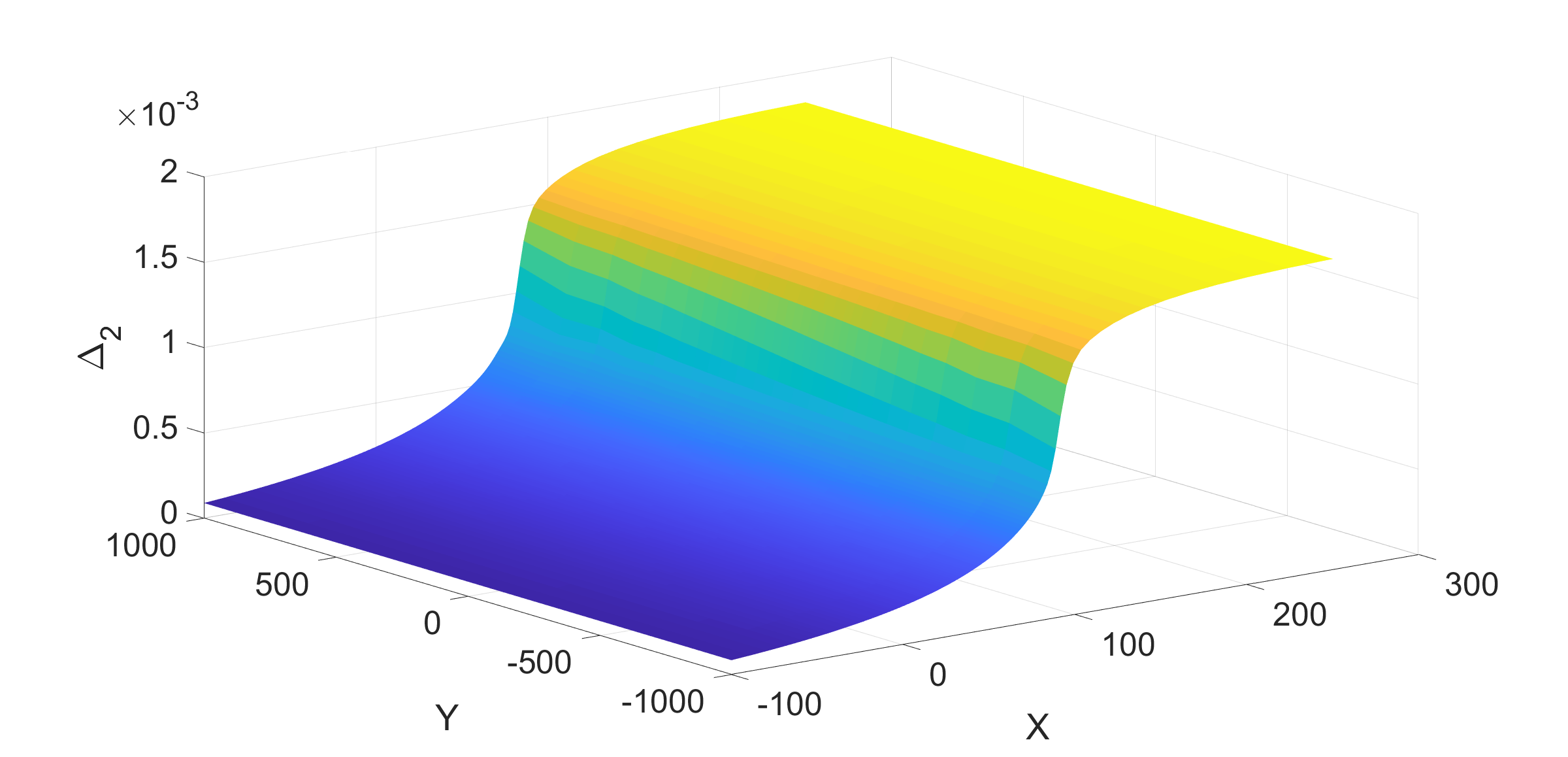}

    \end{tabular}

    \caption{Surfaces of the swing option value and its Delta Greeks if $z=0$ and $T=1$ for sets 1 and 4 where 
    $N_a=20$, $L=1$, $M=10$.
    The left column displays the option value, $\Delta_1$ and $\Delta_2$ for set 1.
    The right column displays the option value, $\Delta_1$ and $\Delta_2$ for set 4.
    }
    \label{Greeks}
\end{figure}

Figure \ref{Greeks} displays the surfaces of the approximated swing option value together with its Delta Greeks if $z=0$ and $t = T$ for parameter sets 1 and~4 with the swing parameters $N_a = 20$, $L = 1$, $M = 10$.
A perusal of $\Delta_1$ indicates that three common regions in the $x$-domain can be distinguished. In the deep out-of-the-money region, $\Delta_1$ is positive but relatively close to zero, i.e., small changes in the price component $x$ have minor impact on the option value. Next, there is a region where $\Delta_1$ exhibits a sharp increase as $x$ increases, and a small change in $x$ has a significant impact on the option value. Finally, in the deep in-the-money region, $\Delta_1$ reaches a plateau, meaning that the option value increases linearly with $x$.

A perusal of $\Delta_2$ shows the same three regions. Notice that the sensitivity with respect to the price component $y$ -- as captured by $\Delta_2$ -- is significantly lower than that with respect to $x$, with about a factor $10^{-3}$. This reduced influence is primarily attributed to the high mean reversion speed $\beta$, which rapidly undermines the impact of $y$ over time. Nevertheless, it is anticipated that the influence of the price component $y$ will become more pronounced as the number of swing action times increases, since the option holder may benefit more frequently from price jumps before their effects are diminished by the strong mean-reverting behaviour.

\subsection{Optimal exercise policy}\label{sec6.6}

In this subsection, we consider the optimal exercise policy for swing options and examine the influence of various model parameters,  given in Table \ref{paramSetsOptimal}, on this policy. We select the Kou-type jump-diffusion model in view of its flexibility in independently modelling upward and downward jumps. The optimal exercise policy is obtained by applying Algorithm \ref{optPath} after the approximation of the swing option values by the dynamic programming approach given in Algorithm \ref{DPP}.

Figure \ref{OptimalCumulative} presents a series of heat maps illustrating the cumulative amount of energy purchased up to forward time $t = 0.5$ under a swing option contract with parameters $K=50$, $T=1$, $r=0.03$, $N_a = 100$, $M = 50$. The choice $t = 0.5$ allows for a clear differentiation between three distinct cases in the optimal exercise policy. The first region (dark blue) represents no exercise, where no units have yet been bought up to time $t = 0.5$. The second region (dark red) corresponds to full exercise, where the maximum allowable number of units $M$ has already been bought. The third region (from light blue to light red) represents partial exercise, where the contract holder has already bought a certain amount of electricity but can still buy additional units in the future in anticipation of more favourable market conditions.

Each heat map in Figure~\ref{OptimalCumulative} corresponds to a specific parameter set listed in Table~\ref{paramSetsOptimal}.
Here set A is chosen as a reference set from which each of the other seven sets B, C,\ldots, H differs by just one parameter value, allowing a comparative analysis of how individual parameters affect the optimal exercise policy.
The $x$- and $y$-axes represent the two terms in the electricity price $x+y$ at initial time, i.e., today.

\begin{table}[!h]
        \caption{Parameter sets for the Kou-type jump case. The time is measured in years.}
        \centering
        \begin{tabular}{cccccccccc}
            parameters &$\mu$& $\alpha$ & $\beta$ & $\sigma$& $\lambda$ & $p$& $\eta_1$& $\eta_2$  & $L$ \\
           Set A  &80& 8 & 126 & 11 & 52 & 0.5 & 0.01 & 0.01& 1 \\
           Set B  &80& 8 & 126 & 11 & 52 & 0.9 & 0.01 & 0.01& 1 \\
           Set C  &80& 8 & 126 & 11 & 52 & 0.1 & 0.01 & 0.01& 1 \\
           Set D  &80& 8 & 126 & 11 & 10 & 0.5 & 0.01 & 0.01& 1 \\
           Set E  &80& 8 & 10 & 11 & 52 & 0.5 & 0.01 & 0.01& 1 \\
           Set F  &80& 1 & 126 & 11 & 52 & 0.5 & 0.01 & 0.01& 1\\
           Set G  &80& 8 & 126 & 11 & 52 & 0.5 & 0.01 & 0.01& 2\\
           Set H  &80& 8 & 126 & 11 & 52 & 0.5 & 0.01 & 0.01& 5\\
        \end{tabular}
        \label{paramSetsOptimal}
\end{table}
\begin{figure}[!h]
    \centering
    \parbox{0.45\linewidth}{\centering \textbf{Set A}\\
    \includegraphics[trim=60 0 0 0, clip,width=\linewidth]{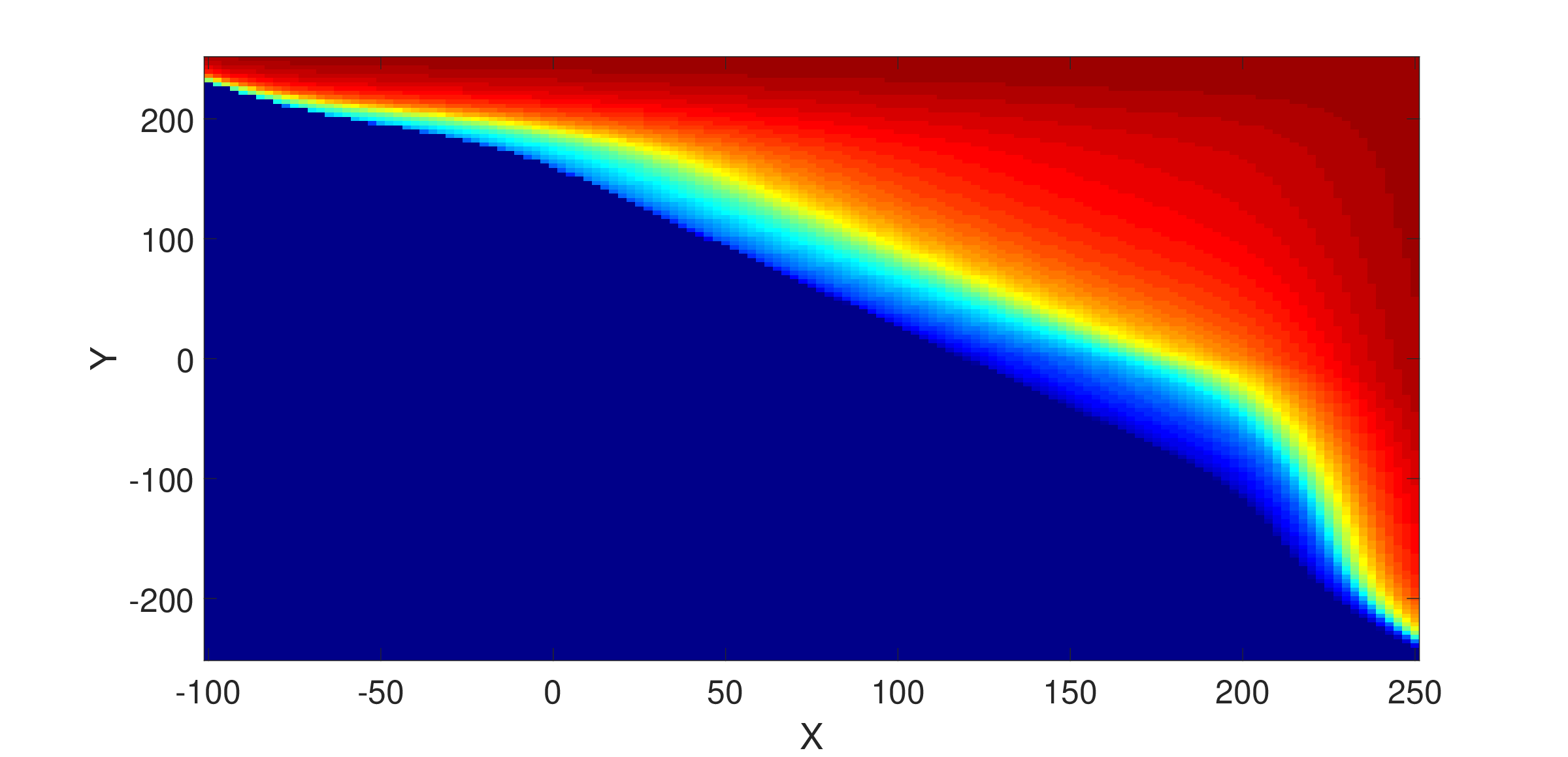}}
    \parbox{0.45\linewidth}{\centering \textbf{Set B}\\
    \includegraphics[trim=60 0 0 0, clip,width=\linewidth]{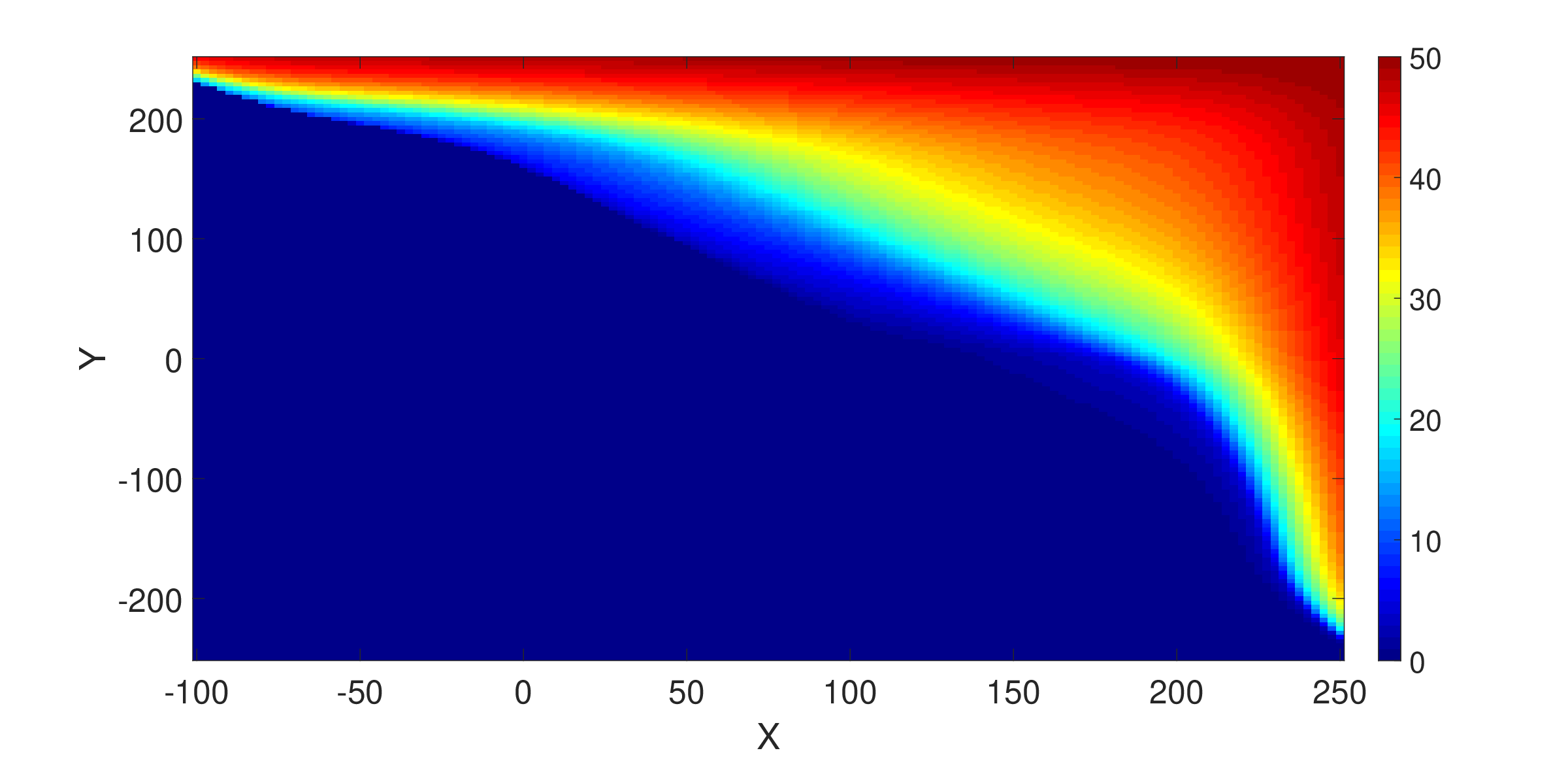}}

\vspace*{3mm}
    \parbox{0.45\linewidth}{\centering \textbf{Set C}\\
    \includegraphics[trim=60 0 0 0, clip,width=\linewidth]{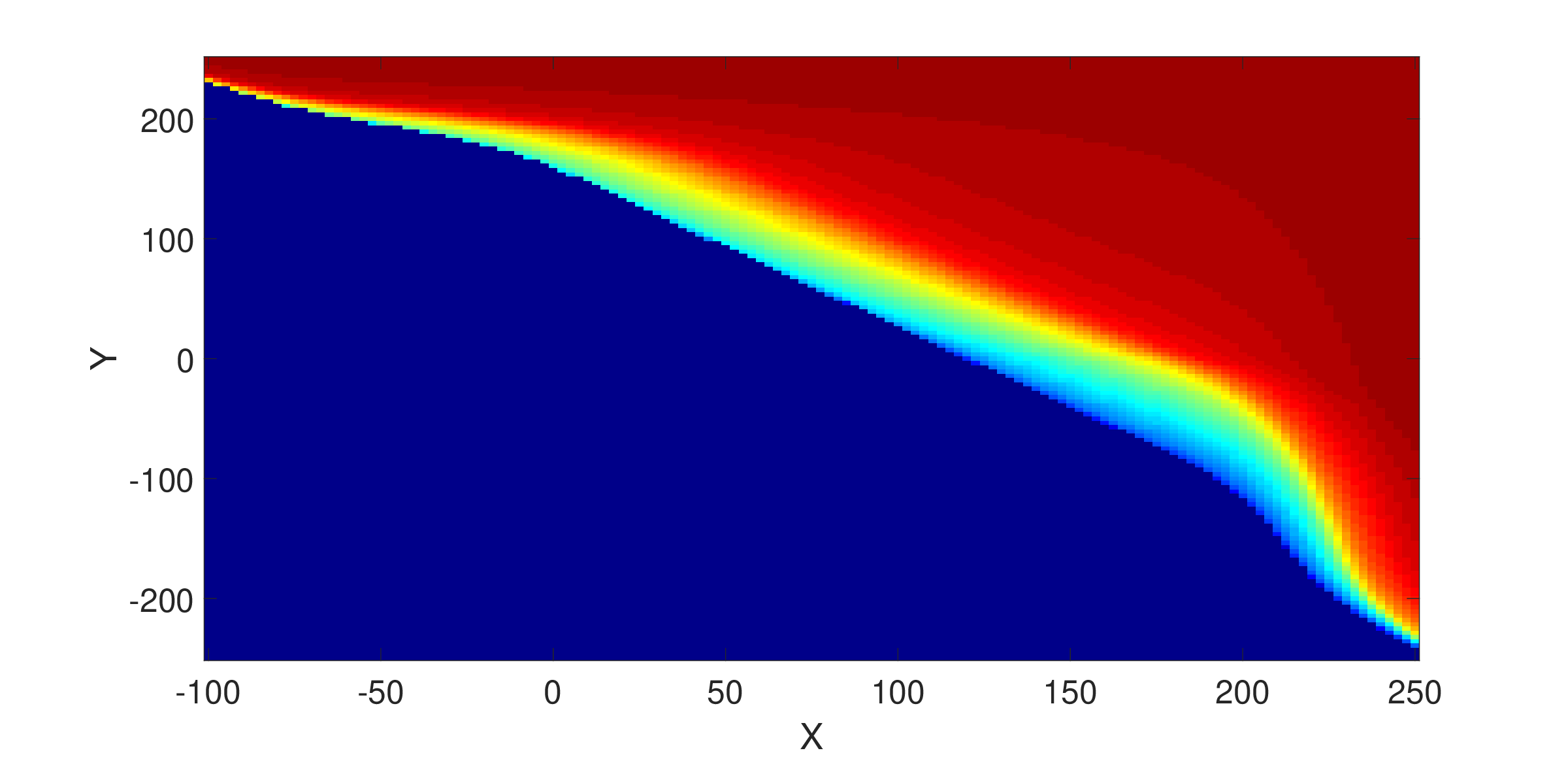}}
    \parbox{0.45\linewidth}{\centering \textbf{Set D}\\
    \includegraphics[trim=60 0 0 0, clip,width=\linewidth]{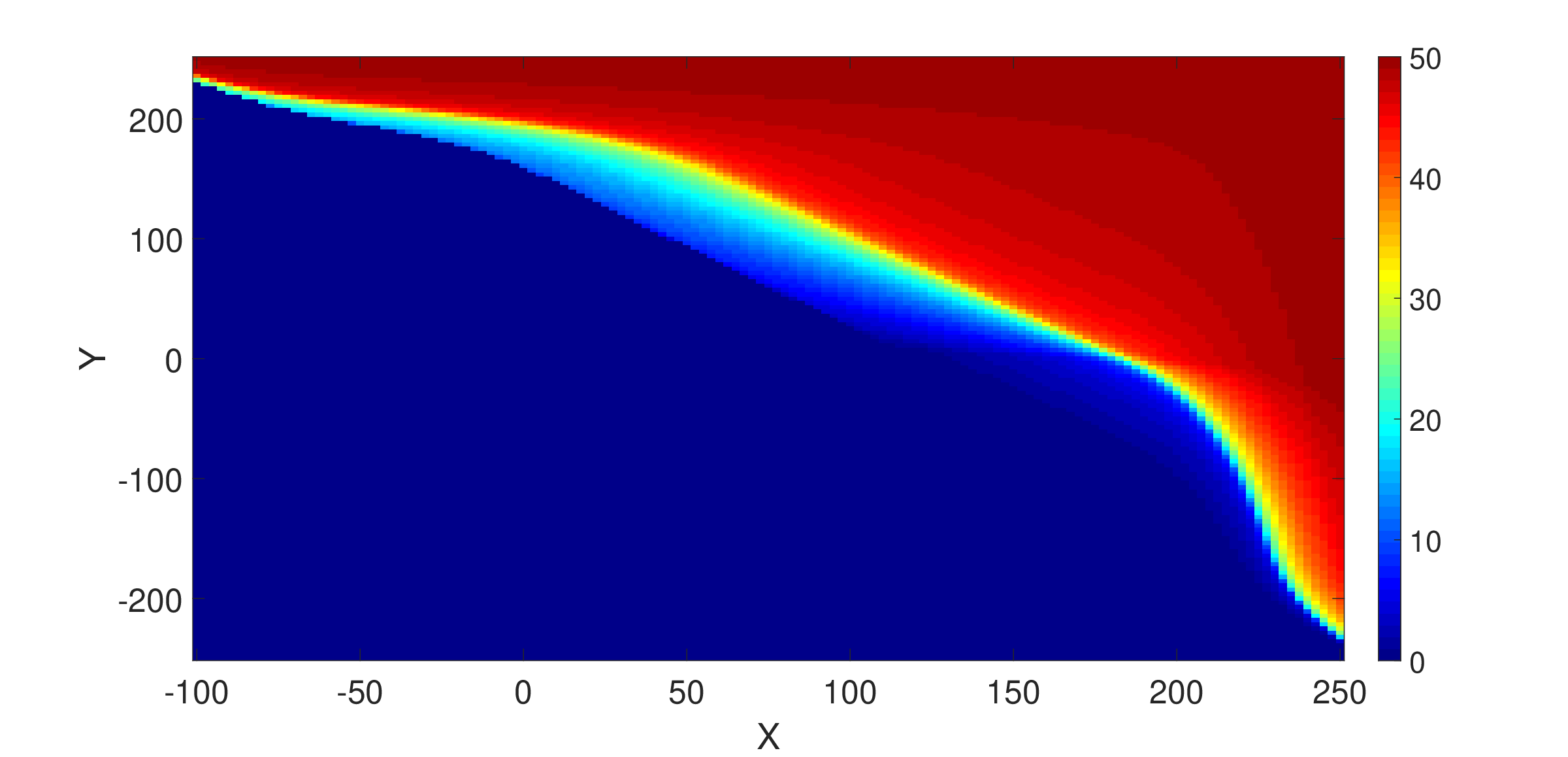}}

\vspace*{3mm}
    \parbox{0.45\linewidth}{\centering \textbf{Set E}\\
    \includegraphics[trim=60 0 0 0, clip,width=\linewidth]{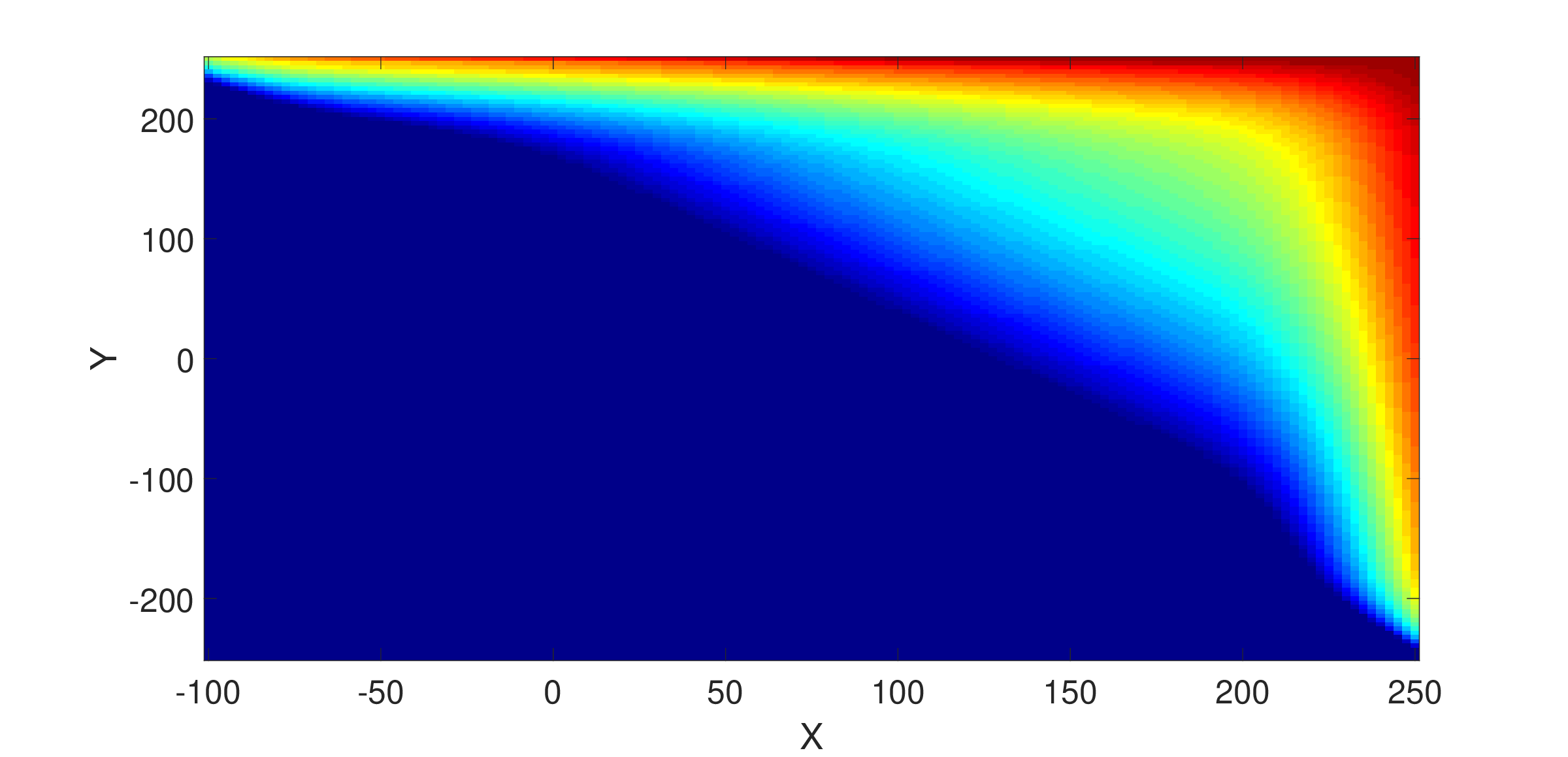}}
    \parbox{0.45\linewidth}{\centering \textbf{Set F}\\
    \includegraphics[trim=60 0 0 0, clip,width=\linewidth]{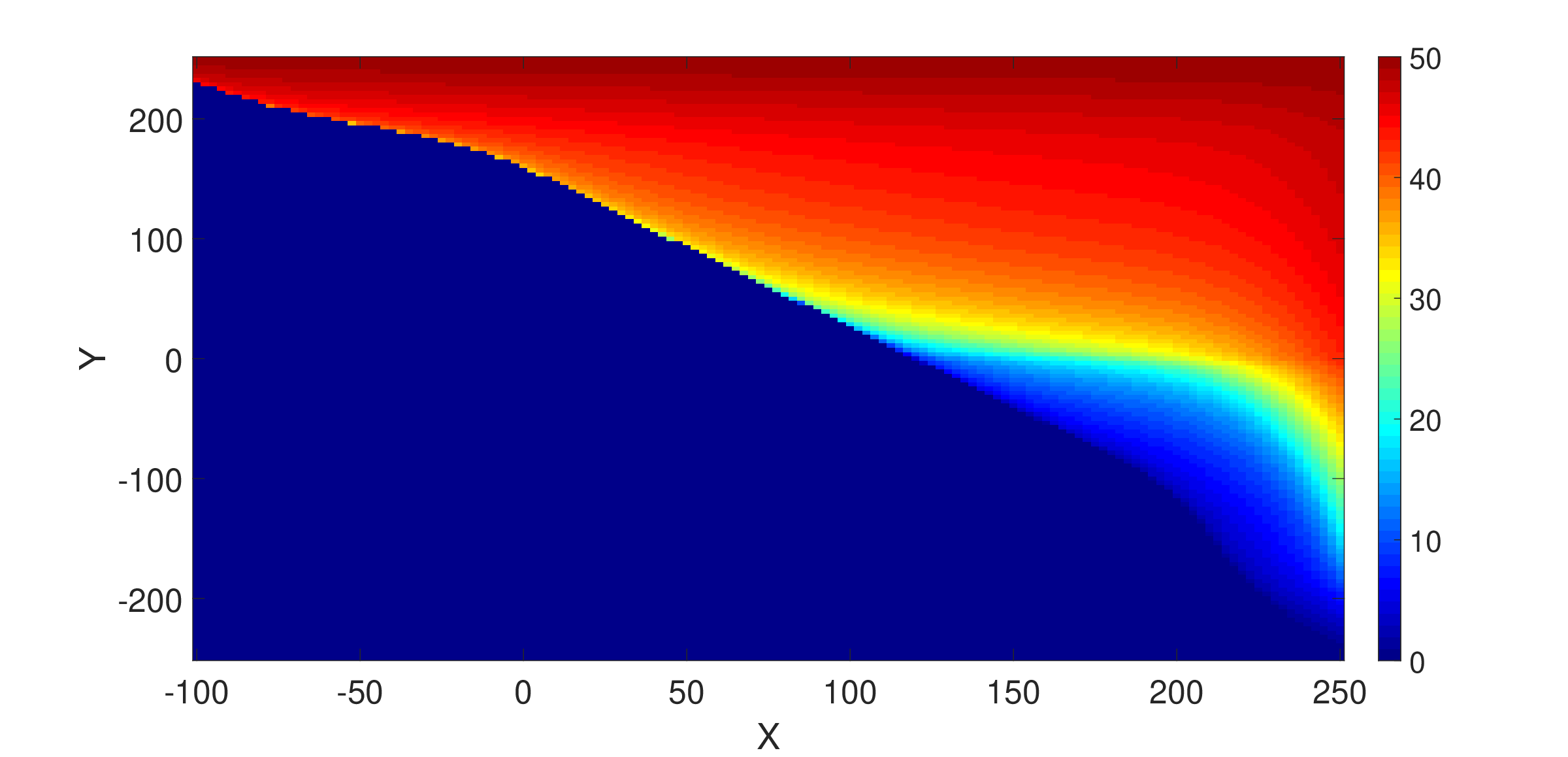}}

\vspace*{3mm}
    \parbox{0.45\linewidth}{\centering \textbf{Set G}\\
    \includegraphics[trim=60 0 0 0, clip,width=\linewidth]{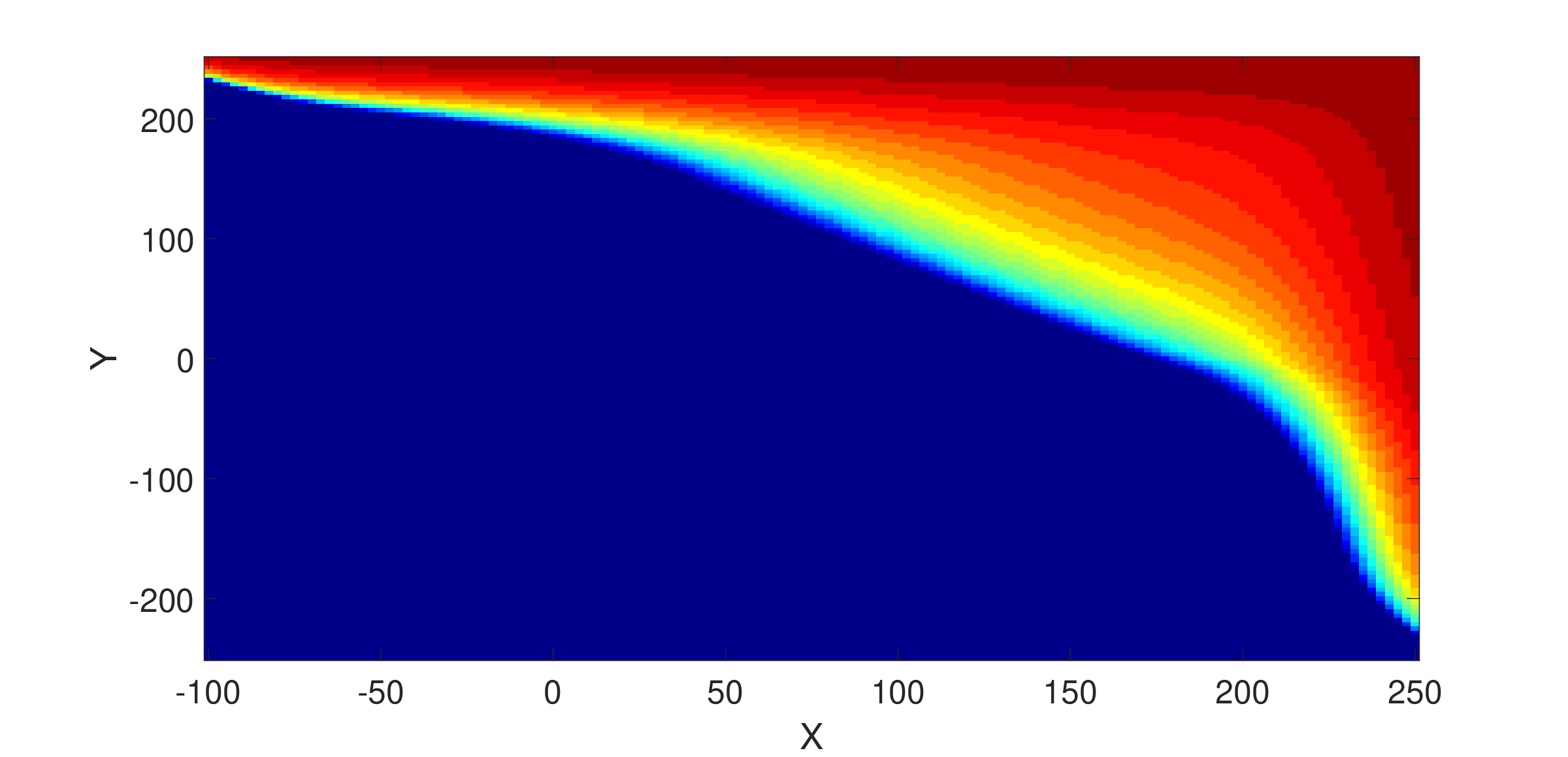}}
    \parbox{0.45\linewidth}{\centering \textbf{Set H}\\
    \includegraphics[trim=60 0 0 0, clip,width=\linewidth]{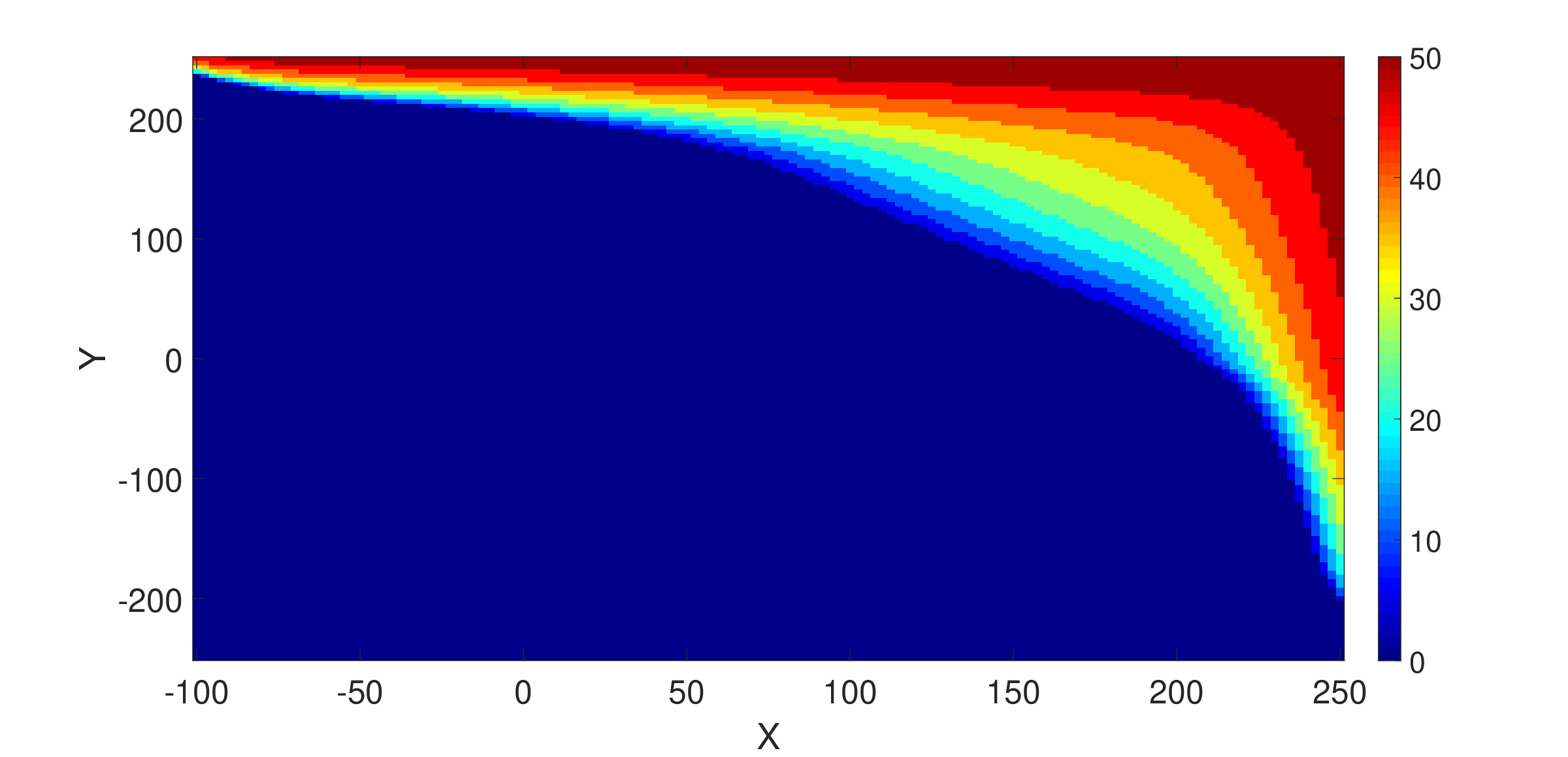}}
    \caption{Cumulative amounts of energy purchased up to forward time $t = 0.5$ under a swing option contract with $K=50$, $T=1$, $r=0.03$, $N_a = 100$, $M = 50$ and the parameter sets given by Table \ref{paramSetsOptimal}. 
}
    \label{OptimalCumulative}
\end{figure}

Compared to set A, upward jumps are more likely for set B, since the value $p$ has increased to $p=0.9$.
It is clear that the dark red region, corresponding to full exercise, is smaller compared to set A. This indicates that the holder of the swing option tends to delay exercising, anticipating higher future prices due to the increased probability of upward jumps. 

Conversely, for set C downward jumps are more probable, as the value $p$ has decreased to $p=0.1$. 
Clearly, the dark red region is now larger compared to that of set A. This suggests that the holder tends to exercise more often, with less emphasis on waiting, as the opportunity to benefit from favourable future price movements is reduced.

Set D has less frequent jumps, since the value $\lambda$ has decreased to $\lambda =10$. 
Again, the dark red region is larger compared to that of set A, indicating that the holder tends to exercise more often as the probability of favourable future price movements is reduced.

Set E has a lower mean reversion speed for the $Y$ process, namely $\beta = 10$. 
Here the dark red region is very small and the region in between dark red and dark blue, corresponding to partial exercise, is large. This suggests that the optimal exercise policy is to wait longer. This could be explained by two factors: (i) downward jumps have a more prolonged impact, discouraging immediate exercise due to sustained low prices, and (ii) favourable price conditions, once reached, tend to persist longer, making it more attractive to wait in anticipation of even better opportunities.

Set F has a lower mean reversion speed for the $X$ process, namely $\alpha = 1$. 
As a distinctive feature, if the $x$-component of the price $x+y$ is high while $y<0$, then the optimal exercise policy tends to delay exercise, see the dark blue region at the bottom right in the subplot. Indeed, it may be argued that, when a downward jump occurs, the slow mean reversion ensures that the high price persists, thus reducing the need to exercise and allowing more time to wait for a more favourable, upward jump.
Next, at the upper left in the subplot a swift transition is observed from dark blue (no exercise) to dark red (full exercise). 
We conjecture that this transition is still smooth.

Sets G and H have a larger number of units of energy that can be bought at each swing action time, with $L=2$ and $L=5$ respectively. 
Here the dark blue region expands. This is conceivable because the option holder can afford to wait longer when the price is low to moderate, in anticipation of more favourable future opportunities that allow buying a larger number of units. 

\section{Conclusions}
In this paper, we investigated the numerical valuation of swing options with discrete action times under an affine two-factor mean-reverting model with jumps. 
For the numerical solution of the pertinent sequence of two-dimensional PIDEs, we studied three methods: SLCNFI, CNFI and DIRKFI.
The first method is based on the semi-Lagrangian approach, whereas the second and third methods are based on a full semidiscretisation approach by suitable finite differences. 
For the time-stepping, the Crank--Nicolson scheme and an $L$-stable DIRK scheme are considered.
Here, the nonlocal integral has been effectively treated by a fixed-point iteration.

Ample numerical experiments demonstrate that all three methods possess a desirable second-order convergence behaviour notwithstanding the convection-dominated property of the PIDE and the nonsmooth initial function and the presence of the nonlocal integral term. Our theoretical analysis confirms that the CNFI and DIRKFI methods are stable and second-order convergent under a smoothness condition. A theoretical convergence analysis of the semi-Lagrangian method is not undertaken in this work and forms an interesting topic for future research. 
The finite difference framework adopted for swing option valuation enables also the efficient approximation of the Delta Greeks, which provides important insight into the impact of the price components $x$ and $y$ on the option values. Moreover, a study of the effect of the financial model parameters on the optimal exercise policy sheds much light on when and how much to exercise.

Among others, a main topic for our future research is the extension of the present numerical solution approach towards more advanced swing option types, such as considered in \cite{Dahlgren}.

\subsection*{Acknowledgements}
The authors acknowledge the support of the Research Foundation - Flanders (FWO)  under grant G0B5623N and the FWO Scientific
Research Network ModSimFIE (FWO WOG W001021N). The third author also acknowledges the financial support of the Research Foundation - Flanders (FWO) through FWO SAB K803124.

    \bibliographystyle{plainnat}
    \bibliography{bibliography}
    \newpage
    \appendix
    \section{Proof of Lemma \ref{bound}}\label{AppA}
     \begin{proof}
        Without loss of generality, we assume that $\tilde{b}>0$. 
        \newline
        For the ease of presentation, write $D_{x,i,i}= :g_i$ and assume there is index $i_{0}$ such that $g_{i_0}=0$. Let $P_0$ denote the part of the inner product $\langle D_x\Tilde{A}_{x}V,V\rangle$ corresponding to the case $i<i_0$.
        We have by \eqref{eqFVij} 
        \[P_0 = \sum_{i=0}^{i_0-1}g_{i}\sum_{k=-1}^2w_kV_{i+k}V_i - w_{-1}g_0V_{-1}V_0 .\]
        The term $w_{-1}g_0V_{-1}V_0$ is subtracted because the inner product does not contain $V_{-1}$ due to the Dirichlet boundary conditions. For the first term of $P_0$, we obtain using \eqref{W3} and the relation $2ab=a^2+b^2-(a-b)^2$:
        \begin{align*}
           & \sum_{i=0}^{i_0-1}g_{i}\sum_{k=-1}^2w_kV_{i+k}V_i\\
           &=\underbrace{\frac 12 \sum_{i=0}^{i_0-1}g_{i}\sum_{k=-1}^2w_kV^2_{i+k}}_{=:A} + \frac 12 \sum_{i=0}^{i_0-1}g_{i}\underbrace{\sum_{k=-1}^2w_k}_{=0}V^2_{i}\underbrace{-
           \frac 12 \sum_{i=0}^{i_0-1}g_{i}\sum_{k=-1}^2w_k(V_{i+k}-V_i)^2}_{=:B}.
        \end{align*}
        Interchanging the summations, invoking again \eqref{W3} and noting that $g_{i-k}-g_i=k\tilde{b}\Delta x$ and $g_{i_0-1+i-k}=g_{i_0-1+i-k}-g_{i_0}=(1-i+k)\tilde{b}\Delta x$ we derive
        \begin{align*}
            A&= \frac 12 \sum_{k=-1}^2 w_k\sum_{i=k}^{i_0-1+k}g_{i-k}V^2_i\\
            & = \frac 12 \sum_{k=-1}^2 w_k\sum_{i=2}^{i_0-2}(g_{i-k}-g_i)V^2_i +\frac 12 \sum_{k=-1}^1 w_k\sum_{i=k}^{1}g_{i-k}V^2_i+\frac 12 \sum_{k=0}^2 w_k\sum_{i=i_0-1}^{i_0-1+k}g_{i-k}V^2_i\\
            &= \frac 12 \tilde{b}\Delta x \sum_{k=-1}^2 k w_k\sum_{i=2}^{i_0-2}V^2_i +\frac 12 \sum_{i=-1}^1 \sum_{k=-1}^{i}w_k g_{i-k}V^2_i +\frac 12 \tilde{b}\Delta x \sum_{i=0}^2\sum_{k=i}^2(1-i+k)w_kV_{i_0-1+i}^2\\
            & = -(w_{-1}+w_2)\tilde{b}\Delta x \sum_{i=2}^{i_0-2}V^2_i +\frac 12 \sum_{i=-1}^1 \sum_{k=-1}^{i}w_k g_{i-k}V^2_i +\frac 12 \tilde{b}\Delta x \sum_{i=0}^2\sum_{k=i}^2(1-i+k)w_kV_{i_0-1+i}^2.
        \end{align*}
        As  $g_{i}\geq 0$ for $i< i_0$ it holds that 
        $\frac{1}{2}g_{i}
        (V_{i+2}-V_{i})^{2}\leq g_{i}(V_{i+1}-V_{i})^{2}+g_{i}(V_{i+2}-V_{i+1})^{2} $. Then, recalling the properties \eqref{W3} and $g_{i-k}-g_i=k\tilde{b}\Delta x$, and defining $w_{-1}^+ = \max(w_{-1},0)$ the term $B$ can be bounded as follows
         \begin{align*}
           B  &= -\frac{1}{2}w_{-1}\sum_{i=0}^{i_0-1}g_i\Big( (V_{i-1}-V_i)^2-(V_{i+1}-V_i)^2\Big) - \frac{1}{2}\underbrace{(w_{-1}+w_1)}_{=-4w_2}\sum_{i=0}^{i_0-1}g_i(V_{i+1}-V_i)^2\\
           & \qquad - \frac{1}{2}w_2\sum_{i=0}^{i_0-1}g_i(V_{i+2}-V_i)^2\displaybreak[0]\\
           & \leq -\frac{1}{2}w_{-1}\sum_{i=0}^{i_0-1}g_i\Big( (V_{i-1}-V_i)^2-(V_{i+1}-V_i)^2\Big) +2w_2 \sum_{i=0}^{i_0-1}g_i(V_{i+1}-V_i)^2\\
          & \qquad -w_2 \sum_{i=0}^{i_0-1}g_i \Big( (V_{i+2}-V_{i+1})^2+(V_{i+1}-V_i)^2\Big) \displaybreak[0]\\
            &= \frac{1}{2}w_{-1}\tilde{b}\Delta x\sum_{i=0}^{i_0-1}(V_{i+1}-V_i)^2-w_2\tilde{b}\Delta x\sum_{i=0}^{i_0-1}(V_{i+2}-V_{i+1})^2-\frac{1}{2}w_{-1}g_0(V_0-V_{-1})^2+w_2g_0(V_1-V_0)^2\displaybreak[0]\\
            & \leq w_{-1}^+\tilde{b}\Delta x \sum_{i=0}^{i_0-1}(V^2_{i+1}+V_i^2)-2w_2\tilde{b}\Delta x\sum_{i=0}^{i_0-1}(V_{i+2}^2+V_{i+1}^2)
            -\frac{1}{2}w_{-1}g_0(V_0-V_{-1})^2+w_2g_0(V_1-V_0)^2\\
          &  = (2w_{-1}^+-4w_2)\tilde{b}\Delta x \sum_{i=2}^{i_0-2} V_i^2+w_{-1}^+\tilde{b}\Delta x (V_0^2+2V^2_1+2V_{i_0-1}^2+V_{i_0}^2)\\
           &\qquad-2w_2\tilde{b}\Delta x (V_1^2+2V_{i_0-1}^2+2V_{i_0}^2+V_{i_0+1}^2) 
           -\frac{1}{2}w_{-1}g_0(V_0-V_{-1})^2+w_2g_0(V_1-V_0)^2.
        \end{align*}
        Combination of the expression for $A$ and the bound for $B$ while noting that $2w_{-1}^+-w_{-1}=|w_{-1}|$ gives
        \begin{align*}
            A+B &  \leq (|w_{-1}|-5w_2) \tilde{b}\Delta x \sum_{i=2}^{i_0-2} V_i^2  +\frac 12 \tilde{b}\Delta x \sum_{i=0}^2\sum_{k=i}^2(1-i+k)w_kV_{i_0-1+i}^2\\
            &\qquad +w_{-1}^+\tilde{b}\Delta x (V_0^2+2V^2_1+2V_{i_0-1}^2+V_{i_0}^2)-2w_2\tilde{b}\Delta x(V_1^2+2V_{i_0-1}^2+2V_{i_0}^2+V_{i_0+1}^2)\\
             &\qquad +\underbrace{\frac 12 \sum_{i=-1}^1 \sum_{k=-1}^{i}w_k g_{i-k}V^2_i -\frac{1}{2}w_{-1}g_0(V_0-V_{-1})^2+w_2g_0(V_1-V_0)^2}_{=:C} .
        \end{align*}
        Estimating further the term $C$ using $w_0=3w_2$ and the inequality $\frac 12 a^2\leq (a-b)^2+b^2$ leads to
        \begin{align*}
            C&=\frac{1}{2}w_{-1}g_0V_{-1}^2 + \frac{1}{2}(w_0g_0+w_{-1}g_1)V_0^2 - \frac{1}{2}w_2g_0V_1^2 -\frac{1}{2}(w_0+2w_{-1})\tilde{b}\Delta xV_1^2\\
            &\qquad -\frac{1}{2}w_{-1}g_0(V_0-V_{-1})^2+w_2g_0(V_1-V_0)^2\\
            &\leq \frac{1}{2}w_{-1}g_0\Big(V_{-1}^2 -(V_0-V_{-1})^2\Big)+\frac{1}{2}(w_2g_0+w_{-1}g_1)V_0^2+ w_2g_0V_0^2 - w_2g_0(V_1-V_0)^2 -w_2g_0V_0^2 \\
            &\qquad -\frac{1}{2}(w_0+2w_{-1})\tilde{b}\Delta xV_1^2 + w_2g_0(V_1-V_0)^2\\
            &= \frac{1}{2}w_{-1}g_0\Big(V_{-1}^2 -(V_0-V_{-1})^2\Big)+\frac{1}{2}(w_2g_0+w_{-1}g_1)V_0^2-\frac{1}{2}(3w_2+2w_{-1})\tilde{b}\Delta xV_1^2.
        \end{align*}
        Moreover, 
        \begin{align*}
            C-w_{-1}g_0V_{-1}V_0&= C + \frac{1}{2}w_{-1}g_0(V_{-1}-V_0)^2-\frac{1}{2}w_{-1}g_0V_{-1}^2-\frac{1}{2}w_{-1}g_0V_{0}^2\\
            &\leq \frac{1}{2}w_2g_0V_0^2-\frac{1}{2}(3w_2+2w_{-1})\tilde{b}\Delta xV_1^2- \frac{1}{2}w_{-1}\tilde{b}\Delta xV_0^2\\
            &\leq -\frac{1}{2}(3w_2+2w_{-1})\tilde{b}\Delta xV_1^2 - \frac{1}{2}w_{-1}\tilde{b}\Delta xV_0^2.
        \end{align*}
        Substituting these bounds into $P_0$ and using the properties \eqref{W3} we find
        \begin{align*}
            P_0&\leq (|w_{-1}|-5w_2)\tilde{b}\Delta x\sum_{i=2}^{i_0-2}V_i^2 +\frac{1}{2}|w_{-1}|\tilde{b}\Delta x V_0^2 + (|w_{-1}|-\frac{7}{2}w_2)\tilde{b}\Delta x V_1^2 \\
            &\quad + (|w_{-1}|-5w_2)\tilde{b}\Delta x V_{i_0-1}^2 + (\half|w_{-1}|-5w_2)\tilde{b}\Delta x V_{i_0}^2 -\frac32w_2\tilde{b}\Delta x V_{i_0+1}^2\\
            &\leq  (|w_{-1}|-5w_2)\tilde{b}\Delta x\sum_{i=0}^{i_0-2}V_i^2 +(|w_{-1}|-5w_2)\tilde{b}\Delta x V_{i_0-1}^2 + (\half|w_{-1}|-5w_2)\tilde{b}\Delta x V_{i_0}^2 -\frac32w_2\tilde{b}\Delta x V_{i_0+1}^2 .
        \end{align*}
        Let $\tilde{g}_i=-g_{m-i}=-g_m-i\tilde{b}\Delta x$ (notice that $\tilde{g}_{m-i_0}=0$) and $\tilde{V}_i=V_{m-i}$.
    By a change of indices, we get
        \begin{align*}
            &\sum_{i=i_0+1}^{m}g_{i}(-w_{-1}V_{i+1} - w_0V_{i} - w_1V_{i-1}-w_2V_{i-2})V_{i}+g_mw_{-1}V_{m}V_{m+1}\\
            &= \sum_{i=0}^{m-i_0-1}\tilde{g_{i}}(w_{-1}\tilde{V}_{i-1} + w_0\tilde{V}_{i} + w_1\tilde{V}_{i+1}+w_2\tilde{V}_{i+2})\tilde{V}_{i}-\tilde{g_0}w_{-1}\tilde{V}_{-1}\tilde{V}_{0}\\
            &\leq (|w_{-1}|-5w_2)\tilde{b}\Delta x\sum_{i=i_0+2}^{m}V_i^2 -\frac{3}{2}w_2\tilde{b}\Delta xV_{i_0-1}^2+(\frac{1}{2}|w_{-1}|-5w_2)\tilde{b}\Delta xV_{i_0}^2 + (|w_{-1}|-5w_2)\tilde{b}\Delta xV_{i_0+1}^2.
        \end{align*}
        Adding up the sums for $i<i_0$ and $i>i_0$, it follows that $\langle D_x\Tilde{A}_{x}V,V\rangle \leq (|w_{-1}|-10w_2) \tilde{b}\, \langle V,V\rangle$ for all vectors $V$, which yields the bound on $\mu_2[D_x\tilde{A}_x]$.
    \end{proof}
  
\section{Proof of Lemma \ref{egf}}\label{AppB}
\begin{proof}
Let $\theta\in [\frac 14,\frac 12]$.
We focus on the case where $0\leq x< \frac{1}{\theta}$, since the result for the case $x\leq 0$ is known, see \cite{CashDIRK}.\\
For any given $x\in[0,\frac{1}{\theta}[$ there holds
\[|R_{\theta}(x + iy)|^2 = \frac{f_{\theta}(y)}{g_{\theta}(y)} \quad (y\in\R),
\]
where 
\[
f_{\theta}(y) = \big(1 + (1-2\theta)x + (\half -2\theta+\theta^2)(x^2-y^2)\big)^2 + \big((1-2\theta)y + 2(\half -2\theta + \theta^2)xy\big)^2
\]
and
\[g_{\theta}(y) = \big((1-\theta x)^2+(\theta y)^2\big)^2.\]
Note that $g_{\theta}(y)>0$ since $0\leq \theta x<1$. We will prove that \[\frac{f_{\theta}(y)}{g_{\theta}(y)}\leq \frac{f_{\theta}(0)}{g_{\theta}(0)}\] or, equivalently,
\[
H_{\theta}(y) := f_{\theta}(y)g_{\theta}(0)-g_{\theta}(y)f_{\theta}(0)\leq 0.
\]
Obviously, $H_{\theta}(0) = 0$. After some straightforward calculations, it follows that the first derivative of $H_{\theta}(y)$ can be written as
\[
 \frac{d H_{\theta}}{dy}(y) = 2yK_{\theta}(y),
\]
where
\begin{align*}
    K_{\theta}(y) &= \big\{-2\big(1 + (1-2\theta)x+(\half-2\theta+\theta^2)(x^2-y^2)\big)(\half -2\theta+\theta^2)\\ 
    & \quad + \big(1-2\theta+2(\half -2\theta+\theta^2)x\big)^2\big\} (1-\theta x)^4\\
    & \quad -2\big(1+(1-2\theta)x+(\half - 2\theta + \theta^2)x^2\big)^2\big((1-\theta x)^2+(\theta y)^2\big)\theta^2.
\end{align*}
If it can be shown that $K_{\theta}(y)\leq 0$ $(y\in\R)$, then one obtains the desired result.
It is clear that $K_{\theta}(y)$ can be expressed as
\[
K_{\theta}(y) = 2K_{\theta,1}(x)y^2 + K_{\theta,2}(x)
\]
with certain terms $K_{\theta,1}(x)$ and $K_{\theta,2}(x)$.
We will prove in the following that $K_{\theta,1}(x)\leq 0$ and $K_{\theta,2}(x)\leq 0$.

For $K_{\theta,1}(x)$ there holds
\begin{align*}
    K_{\theta,1}(x)&= P_{\theta,1}(x)^2-P_{\theta,2}(x)^2
    = (P_{\theta,1}(x)+P_{\theta,2}(x))(P_{\theta,1}(x)-P_{\theta,2}(x)),
\end{align*}
where \[
    P_{\theta,1}(x) = (\half-2\theta+\theta^2)(1-\theta x)^2
\]
and
\[
    P_{\theta,2}(x) = \theta^2\big(1+(1-2\theta)x+(\half-2\theta+\theta^2)x^2\big).
\]
We have
\begin{align*}
    P_{\theta,1}(x)+P_{\theta,2}(x) = 2(\theta-\half)^2 + \theta x L_{\theta}(x)
\end{align*}
with
\[L_{\theta}(x) = 4(\theta-\frac{1}{4})(1-\theta) +2(\theta^2-2\theta+\half)\theta x.\]
It is easily seen that $L_{\theta}(0)\geq0$ and $L_{\theta}(\frac{1}{\theta})\geq 0$. Since $L_{\theta}$ is linear in $x$, it follows that $L_{\theta}(x)\geq 0$ and, hence, that \[P_{\theta,1}(x)+P_{\theta,2}(x)\geq0.\]
Next, one readily verifies that
\[
P_{\theta,1}(x)-P_{\theta,2}(x) = -2(\theta -\frac 14)-\theta(1-3\theta)x \leq 0.
\]
Consequently, $ K_{\theta,1}(x)\leq 0$.

For $K_{\theta,2}(x)$ we obtain after some tedious but straightforward calculations that \[K_{\theta,2}(x)= \frac{1} {2}x(1-\theta x)^2Q_{\theta}(x)\] with
    \[
    Q_{\theta}(x) =x^{2} \left(12 \theta^{4} - 28 \theta^{3} + 14 \theta^{2} - 2 \theta\right) + x \left(- 24 \theta^{3} + 36 \theta^{2} - 12 \theta + 1\right) +  12 \theta^{2} - 12 \theta + 2.
    \]
   If $\theta\not\in  \{ 1 - \frac{\sqrt{2}}{2},\frac 13\}$, then $Q_{\theta}$ is a second-degree polynomial in $x$ and its two roots are given by \[\frac{24 \theta^{3} - 36 \theta^{2} + 12 \theta - 1 \pm (2 \theta - 1) \sqrt{48(\theta + \frac{\sqrt{3}}{6})(\theta - \frac{1}{4})(\theta - \frac{\sqrt{3}}{6})} }{24 \theta(\theta - (1 - \frac{\sqrt{2}}{2}))(\theta-\frac 13)(\theta - (1+\frac{\sqrt{2}}{2}))}.
\]
We distinguish the following five cases, including that of the first-degree polynomial when $\theta\in \{1 - \frac{\sqrt{2}}{2},\frac 13\}$:
\begin{itemize}
    \item  If $\theta \in\{ \frac{1}{4},\frac{\sqrt{3}}{6}\}$, then $Q_{\theta}$ has a double root that is real and negative and the graph of $Q_{\theta}$ is a downward parabola. Thus, $Q_{\theta}(x)\leq 0$.
    \item If $\theta\in \, ]\frac 14,\frac{\sqrt{3}}{6}[$, then the roots are not real numbers and the graph of $Q_{\theta}$ is a downward parabola. Thus, $Q_{\theta}(x)\leq 0$.
    \item If $\theta\in \, ]\frac{\sqrt{3}}{6},1-\frac{\sqrt{2}}{2}[$, then both roots are real and negative and the graph of $Q_{\theta}$ is a downward parabola. Thus, $Q_{\theta}(x)\leq 0$.
    \item If $\theta \in [1-\frac{\sqrt{2}}{2},\frac{1}{3}] $, then $12 \theta^{4} - 28 \theta^{3} + 14 \theta^{2} - 2 \theta \geq 0$ and, together with $x<\frac{1}{\theta}$, this yields
    \begin{align*}
        Q_{\theta}(x)& \leq x(- 12 \theta^{3} + 8 \theta^{2} + 2 \theta - 1) + 12 \theta^{2} - 12 \theta + 2. 
    \end{align*}
    Since $12 \theta^{2} - 12 \theta + 2 \leq 0$, there holds $Q_{\theta}(x)\leq 0$ whenever $- 12 \theta^{3} + 8 \theta^{2} + 2 \theta - 1\leq 0$. Otherwise, 
    \begin{align*}
        Q_{\theta}(x)&\leq \frac{1}{\theta}(- 12 \theta^{3} + 8 \theta^{2} + 2 \theta - 1) + 12 \theta^{2} - 12 \theta + 2 
         = -4\theta +4 -\frac{1}{\theta}
        =-\frac{1}{\theta}(2\theta-1)^2\leq 0.
    \end{align*}
    \item If $\theta\in \, ]\frac{1}{3},\frac{1}{2}]$, then it can be seen that $Q_{\theta}(\frac{1}{\theta}) \leq  0$. Next, the graph of $Q_{\theta}$ is a downward parabola that attains its maximum at $x_{\text{top}} = \frac{ 24 \theta^{3} - 36 \theta^{2} + 12 \theta - 1}{4\theta\left(6 \theta^{3} - 14 \theta^{2} + 7 \theta - 1 \right)}$. It can be verified that $x_{\text{top}}\geq\frac{1}{\theta}$. Thus, $Q_{\theta}(x)\leq 0$.
\end{itemize}
Since $Q_{\theta}(x)\leq 0$, we have $ K_{\theta,2}(x)\leq 0$. Combined with $ K_{\theta,1}(x)\leq 0$, this gives the bound $K_{\theta}(y)\leq 0$.

The rational function $R_{\theta}$ is holomorphic on $Re(z)<\frac{1}{\theta}$. Thus, by the maximum modulus principle, we obtain
    \begin{align*}
        G_{\theta}(x) &= \sup_{Re(z)\leq x}|R_{\theta}(z)|
        = \sup_{Re(z)= x}|R_{\theta}(z)|
        =R_{\theta}(x).
    \end{align*}

    Finally, it is easily seen for the second derivative of $R_{\theta}$ one has \[ \frac{d^2R_{\theta}}{dx^2} (x)= \frac{1+2\theta(1-3\theta)x}{(1-\theta x)^2} \ge 0 \quad (0\leq x< \frac{1}{\theta}).\] Thus $R_{\theta}$ is convex on $[0,\frac{1}{\theta}[$ and consequently, if $0<\nu<\frac{1}{\theta}$, then \[R_{\theta}(x)\leq 1 +\frac{R_{\theta}(\nu)-1}{\nu}\, x\;  \quad (0\leq x \leq \nu).\]
\end{proof}
\end{document}